\documentclass[10.5pt, a4paper]{amsart}
\usepackage{amssymb, amsmath, amscd, bm, amsthm, pdfpages, setspace, hyperref, mathtools, subcaption, graphicx, cite}

\setlength\topmargin{5pt}  
\setlength\oddsidemargin{-15pt}\setlength\evensidemargin{-15pt}
\setlength{\textwidth}{170mm}\setlength{\textheight}{230mm}
\headheight=10pt

\def\@Rref#1{\hbox{\rm \ref{#1}}}
\def\Rref#1{\@Rref{#1}}
\theoremstyle{plain}
\newtheorem{thrm}{Theorem}[section]
\newtheorem{lmm}[thrm]{Lemma}

\newtheorem{prpstn}[thrm]{Proposition}
\theoremstyle{definition}
\newtheorem{dfntn}[thrm]{Definition}

\newtheorem{assumption}[thrm]{Assumption}
\theoremstyle{plain}

\newcommand{\re}{\mathop{\rm Re}\nolimits}
\newcommand{\im}{\mathop{\rm Im}\nolimits}

\newcommand{\ran}{\mathop{\rm ran}\nolimits}

\newcommand{\sr}{\mathop{\rm r}\nolimits}

\begin{document}

\title[Robustness of Polynomial Stability with Respect to Sampling]{Robustness of Polynomial Stability with Respect to Sampling}

\thispagestyle{plain}

\author{Masashi Wakaiki}
\address{Graduate School of System Informatics, Kobe University, Nada, Kobe, Hyogo 657-8501, Japan}
 \email{wakaiki@ruby.kobe-u.ac.jp}
 \thanks{This work was supported by JSPS KAKENHI Grant Number JP20K14362.}

\begin{abstract}
We provide a partially affirmative answer to the following
question on robustness of polynomial stability with respect to sampling:
``Suppose that a continuous-time state-feedback controller achieves the 
polynomial stability of the infinite-dimensional linear system. We apply
an idealized sampler and a zero-order hold to a feedback loop around the controller. Then, is the sampled-data system strongly stable
for all sufficiently small sampling periods? Furthermore,
is the polynomial decay of the continuous-time system transferred to
the sampled-data system under sufficiently fast sampling?''
The generator of the open-loop system is assumed to be 
a Riesz-spectral operator whose eigenvalues are not on the imaginary
axis but may approach it asymptotically.
We provide conditions for strong stability to be preserved under
fast sampling. Moreover, we
estimate the decay rate of the state of the sampled-data system
with a  smooth initial state and a sufficiently small sampling period.
\end{abstract}

\subjclass[2010]{47A55, 47D06, 93C25, 93C57, 93D15}

\keywords{$C_0$-semigroup, infinite-dimensional systems, polynomial stability, sampled-data systems.} 

\maketitle

\section{Introduction}
We study the robustness of polynomial stability with 
respect to sampling. To state our problem precisely, we 
consider the  following sampled-data system with sampling period $\tau >0$:
\begin{subequations}
	\label{eq:sampled_data_sys_intro}
	\begin{align}
		\dot x(t) &= Ax(t) + Bu(t),\quad t\geq 0;\qquad x(0) = x^0 \in X \\
		u(t) &= Fx(k\tau),\quad k\tau \leq t < (k+1)\tau,~k\in \mathbb{N}\cup \{0\},
		\label{eq:control_input_intro}
	\end{align}
\end{subequations}
where $A$ with domain $D(A)$ is the generator of a $C_0$-semigroup $(T(t))_{t \geq 0}$ on a Hilbert space $X$,
and
the control operator $B\colon \mathbb{C}\to X $ 
and the feedback operator $F\colon  X \to  \mathbb{C}$ are bounded 
linear operators.
We assume that the $C_0$-semigroup 
$(T_{BF}(t))_{t \geq 0}$ generated by $A+BF$ is polynomially stable
with parameter $\alpha >0$, which means that
$\sup_{t \geq 0}\|T_{BF}(t)\| < \infty$, the spectrum of $A+BF$ is contained in the open left half-plane, and for all $x \in D(A+BF) = D(A)$,
$\|T_{BF}(t)x\| = o(t^{-1/\alpha})$ as $t \to \infty$, i.e., 
for any $\varepsilon >0$, there exists $t_0 >0$ such that 
for all $t \geq t_0$,
\[
\|T_{BF}(t)x\| \leq \frac{\varepsilon}{t^{1/\alpha}}.
\]
By density of $D(A)$ in $X$, we see that under this assumption,
$(T_{BF}(t))_{t \geq 0}$ is strongly stable, that is, 
\[
\lim_{t\to \infty}\|T_{BF}(t)x^0\| = 0
\]
for all $x^0 \in X$. Intuitively, as the sampling period $\tau >0$ goes to zero,
the sampled-data control input  \eqref{eq:control_input_intro}
becomes closer to the continuous-time control input 
given by $u(t) = Fx(t)$ for $t \geq 0$.
Therefore, the following two questions arise:
\begin{enumerate}
	\renewcommand{\labelenumi}{\alph{enumi})}
	\item Is the sampled-data system \eqref{eq:sampled_data_sys_intro}
	with sufficiently small sampling period $\tau >0$ strongly stable
	in the sense that 
	\[
	\lim_{t \to \infty} x(t) = 0
	\]
	for every initial state $x^0 \in X$?
	\item 
	Does the state $x$ of the sampled-data 
	system \eqref{eq:sampled_data_sys_intro} decay polynomially
	for $x^0 \in D(A)$ and sufficiently small $\tau >0$ as
	the orbit $T_{BF}(t) x^0$?
\end{enumerate}
We provide a partially affirmative answer to these questions 
in this paper.
The effect of sampling on systems can be regarded as
a kind of structured perturbation. In this sense, 
the issue in the questions above is robustness analysis
of polynomial stability with respect to sampling.

For finite-dimensional linear systems, it is well known that 
the closed-loop stability is preserved under fast sampling.
However, the robustness of stability with respect to sampling
is not guaranteed for all 
infinite-dimensional linear systems; see \cite{Rebarber2002}.
It has been shown in \cite{Logemann2003,Rebarber2006} 
that  if $(T_{BF}(t))_{t \geq 0}$ is exponentially stable, that is,
there exist constants $M \geq 1$ and $\omega >0$ such that 
$\|T_{BF}(t)\| \leq M e^{-\omega t}$ for all $t \geq 0$, then
the sampled-data system also has the same property of 
exponential stability for a sufficiently small sampling period $\tau>0$.
Exponential stability is a strong property, which can be seen
from the fact that
exponential stability
is robust under small bounded perturbations and 
even some classes of unbounded perturbations as shown, e.g.,  in
\cite{Pritchard1989, Pandolfi1991}.
Exploiting the advantages of exponential stability,
the robustness analysis developed in \cite{Logemann2003,Rebarber2006}
allows unbounded control operators mapping the input space
into a space larger than the state space, called an extrapolation space.
On the other hand, the 
robustness of strong stability with respect to sampling
has been studied in \cite{Wakaiki2021SIAM}, where the control operator
needs an extra boundedness property related to the continuous spectrum
of $A$.
The reason for imposing
this  boundedness property is that 
strong stability is a rather delicate property that is highly sensitive
to perturbations; see \cite{Paunonen2014JDE,Paunonen2015Springer, Rastogi2020} 
for the robustness of
strong stability of $C_0$-semigroups 
(in the absence of  polynomial stability).

Exponential stability leads to uniformly quantified asymptotic
behaviors of semigroup orbits for all initial values
from the unit ball of the state space.
This is a desirable property from the viewpoint of many applications.
Nevertheless, exponential stability may be unachievable
in control problems, for example, involving 
wave equations or beam equations.
Although  strong stability can be achieved in some of those problems,
it is a qualitative notion of stability unlike exponential stability, and
we do not obtain any information on decay rates
of semigroup orbits from strong stability itself.
Polynomial stability is an important subclass of semi-uniform stability, which
lies between the above two
extreme types of semigroup stability, exponential stability
and strong stability, and guarantees semi-uniform
decay rates for semigroup orbits with 
initial values in the domain of the generator. 
Various results on polynomial stability, and more generally 
semi-uniform stability, have been obtained such as
characterizations of decay rates by resolvent estimates on the imaginary axis
$i\mathbb{R}$ \cite{Liu2005PDR, 
	Batkai2006, Batty2008,Borichev2010,Rozendaal2019} and
robustness to perturbations \cite{Paunonen2011,Paunonen2012SS,Paunonen2013SS, Paunonen2014OM, Rastogi2020}.
We also refer to \cite{Chill2020} for an overview of semi-uniform stability.
A discrete version of semi-uniform stability has been
investigated in the context of the quantified Katznelson-Tzafriri theorem \cite{Seifert2015, Seifert2016, Cohen2016, Ng2020} (see also the survey article \cite{Batty2022Survey})
and the Cayley transform of a semigroup generator \cite{Wakaiki2021JEE}.
However, to the author's knowledge, robustness
analysis with respect to sampling has not been well established for
polynomial stability.

To study the robustness of polynomial stability with respect to sampling,
this paper continues and expands the robustness analysis
developed in \cite{Wakaiki2021SIAM}.
We assume as in \cite{Wakaiki2021SIAM} that $A$ is a Riesz-spactral operator given
by
\begin{equation*}
	A x = 
	\sum_{n=1}^\infty \lambda_n \langle x , \psi_n \rangle \phi_n
\end{equation*}
with domain
\begin{equation*}
	D(A) = \left\{
	x \in X: \sum_{n=1}^\infty |\lambda_n|^2\, |\langle x, \psi_n \rangle|^2 < \infty
	\right\},
\end{equation*}
where $(\lambda_n)_{n \in \mathbb{N}}$ are distinct complex numbers
not on $ i \mathbb{R}$, 
$(\phi_n)_{n \in \mathbb{N}}$ forms a Riesz basis in $X$, and 
$(\psi_n)_{n \in \mathbb{N}}$ is a biorthogonal 
sequence to $(\phi_n)_{n \in \mathbb{N}}$; see Section~\ref{sec:RS_operator}
for the details of Riesz-spactral operators.
We restrict our attention to the situation where only a finite number of
the eigenvalues $(\lambda_n)_{n \in \mathbb{N}}$ are in the set 
\[
\Omega_{\rm a} \coloneqq 
\left\{
\lambda \in \mathbb{C} : \re \lambda > -\omega
\right\}
\cap
\left(
\mathbb{C} \setminus
\left\{
\lambda \in \mathbb{C} \setminus \mathbb{R} : \re \lambda \leq
\frac{- \Upsilon}{|\im \lambda|^\alpha}
\right\}
\right)
\]
for some $\omega,\alpha,\Upsilon >0$.
In contrast, it is assumed in \cite{Wakaiki2021SIAM} that 
the set
\[
\Omega_{\rm b} \coloneqq 
\left\{
\lambda \in \mathbb{C} \setminus \{
0
\}: \re\lambda > - \omega,~| \arg \lambda | < \pi /2 + \vartheta
\right\}
\]
contains only finitely many eigenvalues for some $\omega>0$ and 
$0< \vartheta \leq \pi/2$.
Figure~\ref{fig:Omega} illustrates the sets $\Omega_{\rm a}$ and $\Omega_{\rm b}$.
In our setting,  the continuous spectrum of $A$ has empty
intersection with 
$i\mathbb{R}$ unlike
the setting of \cite{Wakaiki2021SIAM}, but
the spectrum of $A$ may approach $i\mathbb{R}$ asymptotically.
In other words, the resolvent of $A$ 
has a {\em singularity at zero} in \cite{Wakaiki2021SIAM},
whereas, loosely speaking, the resolvent
restricted to $i\mathbb{R}$ has a {\em singularity at infinity} in this study because
the resolvent grows to infinity on $i\mathbb{R}$.
Therefore, the type of non-exponential stability we consider in this paper 
is 
different from that in \cite{Wakaiki2021SIAM}.
It has been shown in \cite{Wakaiki2021SIAM} that 
only strong stability is preserved under fast sampling. Here
we investigate the quantitative behavior of
the state of the sampled-data system in addition to strong stability.

\begin{figure}
	\centering
	\subcaptionbox{Set $\Omega_{\rm a}$ considered in this paper.
		\label{fig:Omega1}}
	[.49\linewidth]
	{\includegraphics[width = 5cm,clip]{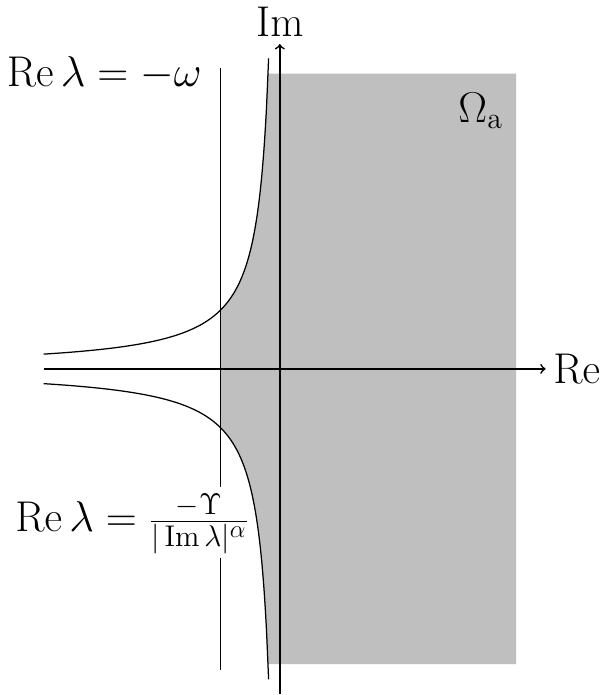}} 
	\subcaptionbox{Set $\Omega_{\rm b}$ considered in \cite{Wakaiki2021SIAM}.
		\label{fig:Omega2}}
	[.49\linewidth]
	{\includegraphics[width = 5cm,clip]{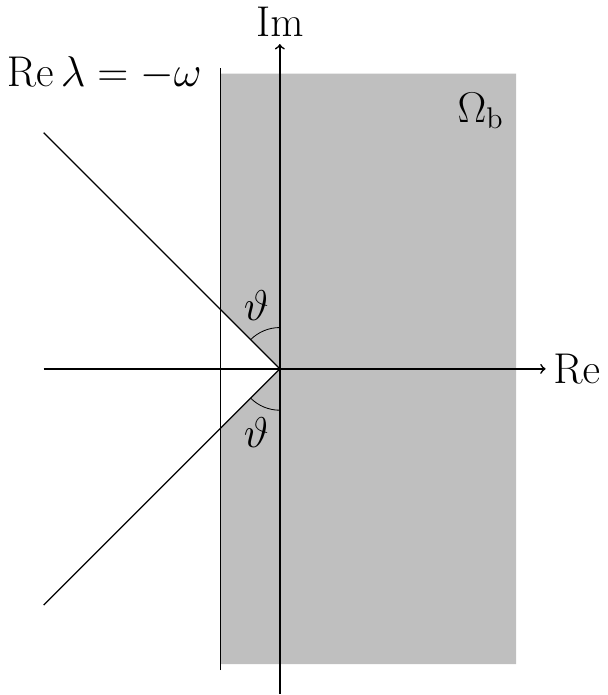}}
	\caption{Comparison of sets that contain only finitely many
		eigenvalues. \label{fig:Omega}}
\end{figure}

Another important difference from \cite{Wakaiki2021SIAM} is an assumption on
the control operator $B$ and the feedback operator $F$.
Let $b,f \in X$ and let
$B,F$ be written as $Bu = bu$ for 
all $u \in \mathbb{C}$ and $Fx = \langle x, f \rangle$
for all $x \in X$. In this paper, we assume that 
\begin{align*}
	b &\in  \mathcal{D}^\beta \coloneqq 
	\left\{ x \in X:
	\sum_{n=1}^{\infty} |\lambda_n|^{2\beta}\,
	|\langle x, \psi_n \rangle |^2 < \infty\right\}\\
	f &\in \mathcal{D}^\gamma_* \coloneqq 
	\left\{ x \in X:
	\sum_{n=1}^{\infty} |\lambda_n|^{2\gamma}\,
	|\langle x,\phi_n  \rangle |^2 < \infty\right\},
\end{align*}
where $\beta,\gamma \geq 0$ satisfy one of the following 
conditions: (i) $\beta$ and $\gamma$ are integers and $\beta + \gamma \geq \alpha$; or (ii) $\beta + \gamma > \alpha$.
On the other hand, it is assumed in \cite{Wakaiki2021SIAM}
that
\[
b \in D(A^{-1}) = \left\{ x \in X:
\sum_{n=1}^{\infty} \frac{
	|\langle x, \psi_n \rangle |^2}{|\lambda_n|^2} < \infty\right\}
\]
and $f \in X$.
Under
the assumption we make in this paper,
$B$ and $F$
have the parameters $\beta$ and $\gamma$ for design flexibility,
which increases the applicability
of the proposed robustness analysis.

The
bounded linear operator $\Delta(\tau)$ on $X$ defined by
\[
\Delta(\tau) \coloneqq T(\tau) + \int^{\tau}_0 T(s)BFds
\]
plays a key role in the analysis of robustness with respect to sampling. In fact, the sampled-data system \eqref{eq:sampled_data_sys_intro}
is strongly stable if and only if
the discrete semigroup $(\Delta(\tau)^k)_{k \in \mathbb{N}}$
is strongly stable, i.e.,  
\[
\lim_{k \to \infty} \|\Delta(\tau)^k x^0\| = 0
\]
for all $x^0 \in X$.
In \cite{Wakaiki2021SIAM}, the sufficient condition 
for strong stability obtained in
the Arendt-Batty-Lyubich-V\~u theorem~\cite{Arendt1998,Lyubich1988} is used in order to show that 
$(\Delta(\tau)^k)_{k \in \mathbb{N}}$ is strongly stable.
This sufficient condition requires that 
the intersection of the spectrum of $\Delta(\tau)$ and the unit circle
be countable, but 
the system we consider does not have this property in general.
Instead of the Arendt-Batty-Lyubich-V\~u theorem, we here employ
the characterization of strong stability by an integral condition on
resolvents developed in \cite{Tomilov2001}.

Let $0 < \delta \leq \alpha/2$, where 
$\alpha >0$ is the constant for the set $\Omega_{\rm a}$.
We give an integral condition on resolvents under which
the orbit $\Delta(\tau)^k x^0$ with
$x^0 \in \mathcal{D}^{\delta}$ satisfies
\[
\|\Delta(\tau)^k x^0\| = 
\begin{cases}
	o(k^{-\delta/\alpha}) & \text{if $0 < \delta < \alpha/2$} \vspace{5pt}\\
	o \left(
	\sqrt{
		\dfrac{\log k}{k}
	}
	\right) & \text{if $\delta = \alpha/2$}
\end{cases}
\]
as $k \to \infty$. Using this integral condition, we  
show that the state $x$
of the sampled-data system \eqref{eq:sampled_data_sys_intro}
with sufficiently small sampling period $\tau >0$
satisfies
\begin{equation}
	\label{eq:decay_estimate_intro}
	\|x(t)\| = 
	\begin{cases}
		o(t^{-\delta/\alpha}) & \text{if $0 < \delta< \alpha/2$} \vspace{5pt}\\
		o \left(
		\sqrt{
			\dfrac{\log t}{t}
		}
		\right) & \text{if $\delta  = \alpha/2$}
	\end{cases}
\end{equation}
as $t\to \infty$
for every initial state $x^0 \in \mathcal{D}^{\delta}$, provided that
$\delta \leq 1$ or $\beta \geq \alpha$.
Considering the open-loop case $F=0$, we see that
$t^{-\delta/\alpha}$ in the estimate \eqref{eq:decay_estimate_intro} 
cannot be replaced by functions with better decay rates.
It is still unknown whether 
the logarithmic factor $\sqrt{\log t}$ in the case $\delta = \alpha/2$ may be removed.

The paper is organized as follows.
Section~\ref{sec:preliminaries} contains preliminaries
on polynomial stability of $C_0$-semigroups and 
Riesz-spectral operators.
In Section~\ref{sec:SDS}, we present the main result
and introduce the discretized system for its proof.
Section~\ref{sec:resolvent_cond} is devoted to
resolvent conditions for stability.
To apply these conditions to the discretized system,
we investigate the spectrum of $\Delta(\tau)$ in Section~\ref{sec:spectrum}.
Section~\ref{sec:resolvent_discretized_sys} completes the proof 
of the main result with the help of the resolvent conditions for stability.
To illustrate the theoretical result, 
we study a wave equation in Section~\ref{sec:wave_eq}.
The conclusion is given in Section~\ref{sec:conclusion}.

\subsection*{Notation and terminology}
We denote by $\mathbb{N}_0$  the set of 
non-negative integers.
For $\omega \in \mathbb{R}$ and $r>0$, we write
\begin{align*}
	\mathbb{C}_\omega &\coloneqq 
	\{
	\lambda \in \mathbb{C}: \re \lambda > \omega
	\}\\
	\mathbb{D}_r &\coloneqq 
	\{
	\lambda \in \mathbb{C}: |\lambda| < r
	\} \\
	\mathbb{E}_r &\coloneqq 
	\{
	\lambda \in \mathbb{C}: |\lambda| > r
	\}.
\end{align*}
Note that we denote
the open right half-plane by
$\mathbb{C}_0$, while $\mathbb{C}^+$ and $\mathbb{C}_+$ are commonly
used in the literature.
For $\alpha ,\Upsilon >0$, 
define 
\[
\Omega_{\alpha,\Upsilon} \coloneqq 
\left\{
\lambda \in \mathbb{C}\setminus \mathbb{R} : \re \lambda \leq  \frac{-\Upsilon}{|\im \lambda|^\alpha}
\right\}.
\]
The closure of a subset $\Omega$ of $\mathbb{C}$ and
the complex conjugate of $\lambda \in \mathbb{C}$ are
denoted by $\overline{\Omega}$ and $\overline{\lambda}$, respectively.
For real-valued functions $f,g$ on $J \subset \mathbb{R}$, we write
\[
f(t) = O\big(g(t)\big)\qquad (t \to \infty)
\] 
if
there exist $M>0$ and $t_0 \in J$ such that 
$f(t) \leq Mg(t)$ for all $t \geq t_0$, and
similarly,
\[
f(t) = o\big(g(t)\big)\qquad (t \to \infty)
\]
if
for any $\varepsilon >0$, there exists $t_0 \in J$ such that 
$f(t) \leq \varepsilon g(t)$ for all $t \geq t_0$.

Let $X$ and $Y$ be Banach spaces. For a linear operator $A\colon X\to Y$,
we denote by $D(A)$ and $\ran (A)$ the domain and the range of $A$,
respectively.
The space of all bounded linear operators from $X$ to $Y$ is denoted by $\mathcal{L}(X,Y)$,
and we write $\mathcal{L}(X) \coloneqq \mathcal{L}(X,X)$.
For a linear operator $A\colon  D(A) \subset X \to X$, we denote by 
$\sigma(A)$ and $\rho(A)$ the spectrum and
the resolvent set of $A$, respectively.
We write $R(\lambda,A) \coloneqq (\lambda I - A)^{-1}$
for $\lambda \in \rho (A)$.
For a subset $S$ of $X$ and a linear operator 
$A\colon  D(A) \subset X \to Y$, we denote by $A|_S$ 
the restriction of $A$ to $S$, i.e., $A|_S x = Ax$ with domain
$D(A|_S) \coloneqq D(A) \cap S$.	

A $C_0$-semigroup $(T(t))_{t \geq 0}$ on a Banach space $X$ 
is called {\em uniformly bounded} if
$\sup_{t \geq 0} \|T(t)\| < \infty$ and
{\em strongly stable} if $\lim_{t \to \infty} T(t)x = 0$ for all $x \in X$.
By a {\em discrete semigroup} on $X$, we mean a family $(\Delta^k)_{k \in \mathbb{N}}$
of operators, where $\Delta \in \mathcal{L}(X)$.
A discrete semigroup 
$(\Delta^k )_{k \in \mathbb{N}}$ on $X$ is called 
{\em power bounded}  
if $\sup_{k \in \mathbb{N}}\|\Delta^k\| < \infty$ and
{\em strongly stable} if $\lim_{k \to \infty} \Delta^k x = 0$
fo all $x \in X$.

An inner product on a Hilbert space is denoted by $\langle \cdot, \cdot \rangle$.
For Hilbert spaces $Z$ and $W$, let $A^*$ denote
the Hilbert space adjoint of a densely
defined linear operator $A\colon D(A) \subset Z \to W$.

\def\theenumi{\alph{enumi})}
\def\labelenumi{\theenumi}
\def\theenumii{(\roman{enumii})}
\def\labelenumii{\theenumii}
\section{Preliminaries}
\label{sec:preliminaries}
In this section, we review the definition and 
some important properties of
polynomially stable $C_0$-semigroups and 
Riesz-spectral operators.

\subsection{Polynomially stable $C_0$-semigroups}
We start by recalling the definition of polynomially stable $C_0$-semigroups.
In Definition~3.2 of \cite{Batkai2006}, 
polynomial stability of $C_0$-semigroups does not include
uniform boundedness,
but here we define polynomial stability to include
uniform boundedness as in
Definition~1.2 of semi-uniform stability in \cite{Batty2008}.
\begin{dfntn}
	A $C_0$-semigroup $(T(t))_{t\geq 0}$ on a Banach space $X$
	generated by $A$ is {\em polynomially stable with parameter $\alpha >0$}
	if the following three conditions are satisfied:
	\begin{enumerate}
		\item
		$(T(t))_{t\geq 0}$ is uniformly bounded;
		\item $i \mathbb{R} \subset \rho(A)$; and
		\item $\|T(t)A^{-1}\| = O(t^{-1/\alpha})$ as $t\to \infty$.
	\end{enumerate}
\end{dfntn}

Let $A$ be the generator of a uniformly bounded $C_0$-semigroup $(T(t))_{t\geq 0}$
on a Hilbert space. Then $-A$ and $-A^*$ are
sectorial in the sense of Chapter~2 of \cite{Haase2006}.
In particular, if $(T(t))_{t\geq 0}$ is a polynomially stable $C_0$-semigroup
on a Hilbert space, then
$A$ and $A^*$ are invertible, and hence 
the fractional powers $(-A)^{\alpha}$ and $(-A^*)^{\alpha}$
are well defined for all $\alpha \in \mathbb{R}$.
We refer, e.g., to Chapter~3 of \cite{Haase2006} and 
Section~II.5.3 of \cite{Engel2000} for the details of 
fractional powers.

We use the following characterizations for polynomial decay  
of a $C_0$-semigroup on a Hilbert space.
The proof can be found in Lemma~2.3 and 
Theorem~2.4 of \cite{Borichev2010}. See also
Lemma~2.3 of \cite{Wakaiki2021JEE} for the result on 
the decay rate of an individual orbit.
\begin{thrm}
	\label{thm:decay_charac}
	Let $(T(t))_{t\geq 0}$ be a uniformly bounded $C_0$-semigroup
	on a Hilbert space $X$ with generator 
	$A$ such that $i \mathbb{R} \subset \rho(A)$. 
	For fixed $\alpha,\delta >0$, the following statements are equivalent:
	\begin{enumerate}
		\item
		$\|T(t)A^{-1}\| = O(t^{-1/\alpha})$ as $t\to \infty$. \vspace{2pt}
		\item[a')]
		$\|T(t)(-A)^{-\delta}\| = O(t^{-\delta/\alpha})$ as $t\to \infty$. \vspace{2pt}
		\item 
		$\|T(t)(-A)^{-\delta}x\| = o(t^{-\delta/\alpha})$ as $t\to \infty$ 
		for all $x \in X$. \vspace{2pt}
		\item
		$\|R(is,A)\| = O(|s|^{\alpha})$ as $|s| \to \infty$.
		\vspace{2pt}
		\item
		$\displaystyle 
		\sup_{\lambda \in \overline{\mathbb{C}_0}}
		\|R(\lambda,A)(-A)^{\alpha}\| < \infty$. 
	\end{enumerate}
\end{thrm}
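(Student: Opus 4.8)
The plan is to prove the five statements equivalent by grouping them into ``soft'' functional-calculus equivalences, (a)$\Leftrightarrow$(a$'$) and (c)$\Leftrightarrow$(d), and ``hard'' analytic equivalences tying the resolvent bounds to the decay of orbits, (a$'$)$\Leftrightarrow$(c) and (a$'$)$\Leftrightarrow$(b), where the Hilbert space structure is used in an essential way. Since $(T(t))_{t\geq0}$ is uniformly bounded and $i\mathbb{R}\subset\rho(A)$, the operators $-A$ and $-A^{*}$ are sectorial and invertible, so the fractional powers $(-A)^{\theta}$, $\theta\in\mathbb{R}$, are available together with the moment (Heinz) inequalities and the composition rules of the functional calculus. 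With these, (a)$\Leftrightarrow$(a$'$) is routine: the moment inequality interpolates the bound on $\|T(t)(-A)^{-\theta}\|$ between the trivial estimate $O(1)$ at $\theta=0$ and the estimate at $\theta=1$, and the semigroup law $T(t)(-A)^{-\theta}=\bigl(T(t/n)(-A)^{-\theta/n}\bigr)^{n}$ carries it past $\theta=1$; read in both directions this converts a bound at exponent $1$ into one at the fixed exponent $\delta$, and conversely. Similarly (c)$\Leftrightarrow$(d) is soft: the growth $\|R(is,A)\|=O(|s|^{\alpha})$ on $i\mathbb{R}$ propagates, via a Neumann series, into a parabolic region of $\mathbb{C}_{0}$, while $\|R(\lambda,A)\|\leq M/\re\lambda$ for $\re\lambda>0$ controls the remaining part of $\overline{\mathbb{C}_{0}}$, and the resulting polynomial bound on $R(\lambda,A)$ over $\overline{\mathbb{C}_{0}}$ is, through the functional calculus for the sectorial operator $-A$, the same as the weighted bound in (d).

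The analytic core is (a$'$)$\Leftrightarrow$(c), which I would take from Lemma~2.3 and Theorem~2.4 of \cite{Borichev2010}. The implication (a$'$)$\Rightarrow$(c) is the more elementary one: by (a)$\Leftrightarrow$(a$'$) one may choose $\delta$ with $\delta/\alpha>1$, so that $t\mapsto\|T(t)(-A)^{-\delta}\|$ is integrable on $(0,\infty)$ and the Laplace integral $R(\lambda,A)(-A)^{-\delta}=\int_{0}^{\infty}e^{-\lambda t}T(t)(-A)^{-\delta}\,dt$ is bounded uniformly on $\overline{\mathbb{C}_{0}}$; the algebraic identity $AR(\lambda,A)=\lambda R(\lambda,A)-I$ together with a functional-calculus estimate then yields $\|R(is,A)\|=O(|s|^{\alpha})$. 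The reverse implication (c)$\Rightarrow$(a$'$) is where the real work is: one represents $T(t)(-A)^{-\delta}$ as a contour integral of $e^{\lambda t}R(\lambda,A)(-A)^{-\delta}$ along a curve that follows $i\mathbb{R}$ but bends into $\mathbb{C}_{0}$ precisely where the resolvent is large, and estimates it. On a general Banach space this only gives the rate $O(t^{-\delta/\alpha})$ up to a logarithmic factor; on the Hilbert space $X$, Plancherel's theorem turns the pointwise resolvent bound from (c) into an $L^{2}$ bound along the contour and removes the loss, producing the sharp rate $O(t^{-\delta/\alpha})$.

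Finally (b)$\Leftrightarrow$(a$'$). The implication (b)$\Rightarrow$(a$'$) is soft: for each $x\in X$ the function $t\mapsto(1+t)^{\delta/\alpha}\|T(t)(-A)^{-\delta}x\|$ is continuous and, by (b), tends to $0$, hence is bounded, so the family $\{(1+t)^{\delta/\alpha}T(t)(-A)^{-\delta}:t\geq0\}\subset\mathcal{L}(X)$ is pointwise bounded and therefore uniformly bounded by the uniform boundedness principle, which is (a$'$). The converse (a$'$)$\Rightarrow$(b) is the individual-orbit refinement (cf.\ Lemma~2.3 of \cite{Wakaiki2021JEE}): from the resolvent bound (c) one shows $t^{\delta/\alpha}\|T(t)(-A)^{-\delta}x\|\to0$ for every fixed $x$, upgrading the uniform $O$-bound to a pointwise $o$-bound either by a density argument — the decay is strictly faster on the dense range of $(-A)^{-\eta}$ for $\eta>\delta$, and the uniform bound from (a$'$) lets one pass to the limit — or by a Riemann--Lebesgue argument applied to the $L^{2}$ representation above. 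Chaining the three groups closes the loop (a)$\Leftrightarrow$(a$'$)$\Leftrightarrow$(c)$\Leftrightarrow$(d) and (a$'$)$\Leftrightarrow$(b). I expect the main obstacle to be the implication (c)$\Rightarrow$(a$'$) (equivalently (c)$\Rightarrow$(b)): turning a merely polynomial resolvent bound on the imaginary axis into the \emph{sharp}, logarithm-free polynomial decay of the orbits is exactly where the Hilbert space hypothesis is indispensable, through Plancherel's theorem and a careful estimate of the bent-contour integral and of the boundary behaviour of $R(\lambda,A)$ as $\re\lambda\downarrow0$.
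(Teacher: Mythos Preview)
Your sketch is correct and in fact goes further than the paper: the paper does not prove this theorem at all but simply quotes it from the literature, pointing to Lemma~2.3 and Theorem~2.4 of \cite{Borichev2010} for the equivalences among (a), (a$'$), (c), (d), and to Lemma~2.3 of \cite{Wakaiki2021JEE} for the individual-orbit statement (b). Your outline follows exactly those cited sources---moment inequalities for (a)$\Leftrightarrow$(a$'$), Neumann-series propagation for (c)$\Leftrightarrow$(d), the Borichev--Tomilov contour-plus-Plancherel argument for (c)$\Rightarrow$(a$'$), and density/uniform boundedness for (a$'$)$\Leftrightarrow$(b)---so there is nothing to compare: you have reconstructed the proof the paper defers to.
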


The following estimate given in 
Lemma~4 of \cite{Paunonen2014OM} is useful in the robustness analysis of polynomial stability. 
\begin{lmm}
	\label{lem:resolvent_estimate}
	Let $A$ be the generator of a polynomially stable $C_0$-semigroup
	with parameter $\alpha >0$ 
	on a Hilbert space $X$.
	Let $\beta,\gamma \geq 0$ satisfy $\beta+\gamma \geq \alpha$
	and let $U$ be a Banach space.
	There exists a constant $M \geq 1$ such that 
	if $B \in \mathcal{L}(U,X)$ and $F \in \mathcal{L}(X,U)$
	satisfy $\ran (B) \subset D((-A)^\beta)$ and 
	$\ran (F^*) \subset D((-A^*)^\gamma)$, then
	\[
	\|FR(\lambda,A)B\| \leq M \|(-A)^{\beta}B\|\, \|(-A^*)^\gamma F^*\|
	\]
	for all $\lambda \in \overline{\mathbb{C}_0}$.
\end{lmm}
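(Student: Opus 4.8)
The plan is to reduce the assertion, via an explicit factorization of $FR(\lambda,A)B$ through fractional powers of $-A$, to the single uniform resolvent bound
\[
\sup_{\lambda \in \overline{\mathbb{C}_0}} \big\| (-A)^{-(\beta+\gamma)} R(\lambda,A) \big\| < \infty ,
\]
and then to read this bound off from Theorem~\ref{thm:decay_charac} together with the boundedness of negative fractional powers of the invertible operator $-A$. As a preliminary, observe that polynomial stability of $(T(t))_{t \ge 0}$ forces $0 \in i\mathbb{R} \subset \rho(A)$, so $-A$, and likewise $-A^*$, is sectorial and invertible; hence $(-A)^s$ is well defined for every $s \in \mathbb{R}$, with $(-A)^{-s} \in \mathcal{L}(X)$ for $s \ge 0$, with the composition law $(-A)^{-s}(-A)^{-t} = (-A)^{-(s+t)}$ for $s,t \ge 0$, and with each $(-A)^{-s}$ commuting with $T(t)$ and therefore with $R(\lambda,A)$ for $\lambda \in \rho(A)$.

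The first step is to extract the bounded factors produced by the hypotheses on $B$ and $F$. Since $\ran(B) \subset D((-A)^\beta)$, the operator $\widetilde{B} := (-A)^\beta B \colon U \to X$ is everywhere defined and closed, hence bounded by the closed graph theorem, and $B = (-A)^{-\beta}\widetilde{B}$. Since $\ran(F^*) \subset D((-A^*)^\gamma)$, the operator $(-A^*)^\gamma F^* \colon U^* \to X$ is bounded for the same reason; moreover, for all $x \in D((-A)^\gamma)$ and $u^* \in U^*$ one has
\[
\langle F(-A)^\gamma x, u^* \rangle = \langle (-A)^\gamma x, F^* u^* \rangle = \langle x, (-A^*)^\gamma F^* u^* \rangle ,
\]
so $F(-A)^\gamma$ is bounded on the dense domain $D((-A)^\gamma)$, its continuous extension $G \in \mathcal{L}(X,U)$ satisfies $\|G\| = \|(-A^*)^\gamma F^*\|$, and $F = G(-A)^{-\gamma}$ on $X$ because $(-A)^\gamma(-A)^{-\gamma} = I$. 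Substituting $B = (-A)^{-\beta}\widetilde{B}$ and $F = G(-A)^{-\gamma}$ and commuting the bounded operators $(-A)^{-\beta}$ and $(-A)^{-\gamma}$ past $R(\lambda,A)$ and past each other yields
\[
FR(\lambda,A)B = G\,(-A)^{-\gamma}R(\lambda,A)(-A)^{-\beta}\,\widetilde{B} = G\,(-A)^{-(\beta+\gamma)}R(\lambda,A)\,\widetilde{B},
\]
so that
\[
\|FR(\lambda,A)B\| \le \|(-A^*)^\gamma F^*\|\, \big\| (-A)^{-(\beta+\gamma)}R(\lambda,A) \big\|\, \|(-A)^\beta B\|
\]
for every $\lambda \in \overline{\mathbb{C}_0}$.

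It then remains to bound the middle factor uniformly. Put $\sigma := \beta+\gamma-\alpha \ge 0$; by the composition law,
\[
(-A)^{-(\beta+\gamma)}R(\lambda,A) = (-A)^{-\sigma}\,(-A)^{-\alpha}R(\lambda,A),
\]
where $(-A)^{-\sigma} \in \mathcal{L}(X)$, with $(-A)^0 := I$ when $\sigma = 0$. By Theorem~\ref{thm:decay_charac}, the resolvent bound $C_\alpha := \sup_{\lambda \in \overline{\mathbb{C}_0}} \|R(\lambda,A)(-A)^{-\alpha}\|$ is finite, and $R(\lambda,A)(-A)^{-\alpha} = (-A)^{-\alpha}R(\lambda,A)$; hence $\| (-A)^{-(\beta+\gamma)}R(\lambda,A) \| \le \|(-A)^{-\sigma}\|\,C_\alpha$ for all $\lambda \in \overline{\mathbb{C}_0}$. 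Combining with the previous display, the assertion holds with $M := \max\{ 1,\ \|(-A)^{-\sigma}\|\,C_\alpha \}$, which depends only on $A$, $\alpha$, $\beta$, $\gamma$ and not on the particular $B$ and $F$.

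The proof involves no new idea, and the work lies entirely in the bookkeeping: verifying that the fractional powers of $-A$ and $-A^*$ obey the composition law and commute with $R(\lambda,A)$; that the range conditions on $B$ and $F^*$ really deliver bounded factors with $\|G\| = \|(-A^*)^\gamma F^*\|$, which is a duality argument valid for an arbitrary Banach space $U$; and --- the only point at which polynomial stability is actually used --- that Theorem~\ref{thm:decay_charac} supplies the bound on $\|(-A)^{-\alpha}R(\lambda,A)\|$ over the entire closed right half-plane $\overline{\mathbb{C}_0}$ rather than merely on the imaginary axis.
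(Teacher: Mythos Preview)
The paper does not give its own proof of this lemma; it simply records the estimate and cites Lemma~4 of \cite{Paunonen2014OM} for the proof. Your argument is correct and is essentially the natural one: the range hypotheses let you factor $B=(-A)^{-\beta}\widetilde B$ and $F=G(-A)^{-\gamma}$ with $\widetilde B$ and $G$ bounded, the commutation of negative fractional powers with the resolvent collapses the middle to $(-A)^{-(\beta+\gamma)}R(\lambda,A)$, and the split $(-A)^{-(\beta+\gamma)}=(-A)^{-(\beta+\gamma-\alpha)}(-A)^{-\alpha}$ together with item~d) of Theorem~\ref{thm:decay_charac} yields the uniform bound on $\overline{\mathbb{C}_0}$. (Note that item~d) as printed in the paper reads $\|R(\lambda,A)(-A)^{\alpha}\|$, which is evidently a typo for $\|R(\lambda,A)(-A)^{-\alpha}\|$; you use the correct version.) Your duality computation also gives the sharp identity $\|G\|=\|(-A^*)^\gamma F^*\|$, since $G^*=(-A^*)^\gamma F^*$ by density.
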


\subsection{Riesz-spectral operators}
\label{sec:RS_operator}
Next we recall the definition of Riesz-spectral operators and 
briefly state their most relevant properties. We refer the reader to
Section~3.2 of \cite{Curtain2020}, Section~2.4 of \cite{Tucsnak2009}, and Chapter~2 of \cite{Guo2019book} 
for more details.
\begin{dfntn}[Definition 3.2.6 of \cite{Curtain2020}]
	\label{def:RSO}
	Let $X$ be a Hilbert space and let
	$A\colon D(A) \subset X \to X$ be a closed linear operator with
	simple eigenvalues $(\lambda_n)_{n \in \mathbb{N}}$ and
	corresponding eigenvectors $(\phi_n)_{n \in \mathbb{N}}$. 
	We say that $A$ is a {\em Riesz-spectral operator} if
	the following two conditions are satisfied:
	\begin{enumerate}
		\item
		$(\phi_n)_{n \in \mathbb{N}}$ is a Riesz basis; and
		\item
		The set of eigenvalues $\{\lambda_n : n \in \mathbb{N} \}$
		has at most finitely many accumulation points.
	\end{enumerate}
\end{dfntn}

Let $A$ be a Riesz-spectral operator on a Hilbert space $X$
with simple eigenvalues 
$(\lambda_n)_{n \in \mathbb{N}}$ and 
corresponding eigenvectors $(\phi_n)_{n \in \mathbb{N}}$. 
Let $(\psi_n)_{n \in \mathbb{N}}$ be the eigenvectors 
of the adjoint $A^*$ corresponding to the eigenvalues 
$(\overline{\lambda_n})_{n \in \mathbb{N}}$.
Then
$(\psi_n)_{n \in \mathbb{N}}$ can be suitable scaled so that
$(\phi_n)_{n \in \mathbb{N}}$ and $(\psi_n)_{n \in \mathbb{N}}$
are {\em biorthogonal}, i.e.,
\[
\langle \phi_n, \psi_m \rangle = 
\begin{cases}
	1 & \text{if $n=m$} \\
	0 & \text{otherwise}.
\end{cases}
\]
A sequence biorthogonal to a Riesz basis in $X$ 
is unique and also forms a Riesz basis in $X$.
Throughout this paper, we set the sequence $(\psi_n)_{n \in \mathbb{N}}$
of the eigenvectors of
the adjoint $A^*$ so that $(\psi_n)_{n \in \mathbb{N}}$
are biorthogonal to $(\phi_n)_{n \in \mathbb{N}}$.
Every $x \in X$ can be represented uniquely by
\[
x = \sum_{n=1}^{\infty} \langle x, \psi_n \rangle \phi_n = 
\sum_{n=1}^{\infty} \langle x, \phi_n \rangle \psi_n.
\]
Moreover,
there exist constants $M_{\rm a}, M_{\rm b} >0$ such that
for all  $x \in X$,
\begin{align*}
	M_{\rm a} \sum_{n=1}^\infty |\langle x, \psi_n \rangle |^2 \leq \|&x\|^2 \leq 
	M_{\rm b} \sum_{n=1}^\infty |\langle x, \psi_n \rangle |^2
	\\
	\frac{1}{M_{\rm b}}\sum_{n=1}^\infty |\langle x, \phi_n \rangle |^2 \leq \|&x\|^2 \leq 
	\frac{1}{M_{\rm a}} \sum_{n=1}^\infty |\langle x, \phi_n \rangle |^2.
\end{align*}
We shall frequently use these inequalities without comment. 

The Riesz-spectral operator $A$ has the following representation:
\begin{equation}
	\label{eq:RS_operator}
	A x = 
	\sum_{n=1}^\infty \lambda_n \langle x , \psi_n \rangle \phi_n,\quad 
	x \in D(A)
\end{equation}
with domain
\begin{equation*}
	D(A) = \left\{
	x \in X: \sum_{n=1}^\infty |\lambda_n|^2\, |\langle x, \psi_n \rangle|^2 < \infty
	\right\}.
\end{equation*}
The spectrum of the Riesz-spectral operator $A$ is the closure of its point spectrum, that is,
$\sigma(A) = \overline{\{\lambda_n:n \in \mathbb{N}\}}$.
For $\lambda \in \rho(A)$, the resolvent $R(\lambda,A)$ is given by
\begin{equation}
	\label{eq:RS_resolvent}
	R(\lambda,A)x = \sum_{n=1}^\infty \frac{1}{\lambda- \lambda_n}
	\langle x , \psi_n \rangle \phi_n,\quad 
	x \in X.
\end{equation}
The Riesz-spectral operator $A$ generates a $C_0$-semigroup on $X$
if and only if
$\sup_{n \in \mathbb{N}} \re \lambda_n < \infty$, and the $C_0$-semigroup $(T(t))_{t\geq 0}$ generated by $A$ can be written as
\[
T(t)x = \sum_{n=1}^\infty e^{t\lambda_n } \langle x , \psi_n \rangle \phi_n
\]
for all $x \in X$ and $t \geq 0$.

The adjoint $A^*$ is also a Riesz-spectral operator and is 
represented as 
\[
A^* x = \sum_{n=1}^\infty \overline{\lambda_n} \langle 
x, \phi_n
\rangle \psi_n,\quad 
x \in D(A^*)
\]
with domain
\[
D(A^*) = \left\{
x \in X :
\sum_{n=1}^\infty 
\left|
\lambda_n
\right|^2 \,  \left|
\langle x, \phi_n \rangle
\right|^2 < \infty
\right\}.
\]
Moreover, the $C_0$-semigroup generated by $A^*$ is given by 
$(T(t)^* )_{t\geq 0}$.

To
make assumptions on the ranges of the control operator $B\in \mathcal{L}(\mathbb{C},X)$ and
the adjoint of the feedback operator $F\in\mathcal{L}(X,\mathbb{C})$,
we use the following subsets with parameters $\beta,\gamma \geq 0$:
\begin{align*}
	\mathcal{D}^{\beta} &\coloneqq
	\left\{ x \in X:
	\sum_{n=1}^{\infty} |\lambda_n|^{2\beta}\,
	|\langle x, \psi_n \rangle |^2 < \infty\right\}\\
	\mathcal{D}_*^{\gamma} &\coloneqq
	\left\{ x \in X:
	\sum_{n=1}^{\infty} |\lambda_n|^{2\gamma}\,
	|\langle x,\phi_n  \rangle |^2 < \infty\right\}.
\end{align*}

\section{Stability of sampled-data systems}
\label{sec:SDS}
In this section, we 
present the system under consideration and state the main result. We also introduce the discretized system as the first step of its proof.

\subsection{Main result}
Let $X$ be a Hilbert space, and
consider the following sampled-data system with 
state space $X$ and input space $\mathbb{C}$: 
\begin{subequations}
	\label{eq:sampled_data_sys}
	\begin{align}
		\dot x(t) &= Ax(t) + Bu(t),\quad t\geq 0;\qquad x(0) = x^0 \in X \\
		u(t) &= Fx(k\tau),\quad k\tau \leq t < (k+1)\tau,~k \in \mathbb{N}_0,
	\end{align}
\end{subequations}
where 
$x(t) \in X$ is the state, $u(t) \in \mathbb{C}$ is the control input, 
$\tau>0$ is the sampling period,
$A\colon D(A) \subset X \to X$ is the generator of a $C_0$-semigroup 
$(T(t) )_{t\geq 0}$ on $X$,
$B \in \mathcal{L}(\mathbb{C},X)$ is the control operator, and
$F \in \mathcal{L}(X,\mathbb{C})$ is the feedback operator.

\begin{dfntn}
	The sampled-data system \eqref{eq:sampled_data_sys} is called
	{\em strongly stable} if 
	\[\lim_{t\to \infty} \|x(t)\| = 0
	\] 
	for every initial  state $x^0 \in X$.
\end{dfntn}

To state the main result,
we make the following assumption on the 
sampled-data system
\eqref{eq:sampled_data_sys}.
\begin{assumption}
	\label{assum:for_MR}
	Let $A$ be a Riesz-spectral operator on a 
	Hilbert space $X$ with  simple eigenvalues 
	$(\lambda_n)_{n \in \mathbb{N}}$ and
	corresponding eigenvectors $(\phi_n)_{n \in \mathbb{N}}$. 
	Let $(\psi_n)_{n \in \mathbb{N}}$ be the eigenvectors of 
	$A^*$ such that $(\phi_n)_{n \in \mathbb{N}}$ and 
	$(\psi_n)_{n \in \mathbb{N}}$ are biorthogonal.
	Let the control operator $B \in \mathcal{L}(\mathbb{C},X)$ 
	and the feedback operator $F\in \mathcal{L}(X,\mathbb{C})$ 
	be represented as
	\begin{align}
		\label{eq:B_F_rep}
		Bu = bu,\quad u \in \mathbb{C};\qquad 
		Fx = \langle x, f\rangle,\quad x\in X
	\end{align}
	for some $b,f \in X$. Assume that 
	the operators $A$, $B$, and $F$ satisfy the following conditions:
	\begin{enumerate}
		\def\theenumi{\arabic{enumi}}
		\def\labelenumi{\theenumi}
		\renewcommand{\labelenumi}{(A\arabic{enumi})}
		\item
		\label{assump:finite_unstable}
		There exist constants $\omega, \alpha, \Upsilon >0$ such that 
		$\mathbb{C}_{-\omega} \cap (\mathbb{C} \setminus \Omega_{\alpha,\Upsilon})$ has
		only finite elements of $(\lambda_n)_{n \in \mathbb{N}}$.
		\item 
		\label{assump:imaginary} 
		$\{\lambda_n :n \in \mathbb{N}\} \cap i \mathbb{R} = \emptyset$.
		\item
		\label{assump:closed_loop}
		$A+BF$ generates a polynomially stable $C_0$-semigroup 
		$(T_{BF}(t))_{t \geq0}$
		with parameter $\alpha$ on $X$.
		\item 
		\label{assump:b_f_cond}
		There exist constants
		$\beta,\gamma \geq 0$ such that
		$
		b \in  \mathcal{D}^{\beta}$,
		$f\in \mathcal{D}_*^{\gamma}
		$,
		and one of the following conditions holds:
		\begin{enumerate}
			\item $\beta ,\gamma \in \mathbb{N}_0$ and
			$\beta + \gamma \geq \alpha$.
			\item $\beta + \gamma > \alpha$.
		\end{enumerate}
		
	\end{enumerate}
\end{assumption}

\def\theenumi{\alph{enumi})}
\def\labelenumi{\theenumi}
\def\theenumii{(\roman{enumii})}
\def\labelenumii{\theenumii}

By (A\ref{assump:finite_unstable}), the Riesz-spectral operator $A$
generates a $C_0$-semigroup $(T(t))_{t \geq 0}$. 
Since $\sigma(A) = \overline{\{\lambda_n:n \in \mathbb{N}\}}$, it follows that $\sigma(A) \cap i \mathbb{R} = \emptyset$
under (A\ref{assump:finite_unstable}) and (A\ref{assump:imaginary}).
The 
eigenvalues $(\lambda_n)_{n \in \mathbb{N}}$ may approach $i \mathbb{R}$
asymptotically, and an upper bound of the asymptotic rate
is represented by the parameter $\alpha$
given in (A\ref{assump:finite_unstable}).
Note that
when $\lim_{\ell \to \infty }\re \lambda_{n_\ell} = 0$
for some subsequence $(\lambda_{n_\ell})_{\ell \in \mathbb{N}}$, 
there does not exist a feedback operator $F \in \mathcal{L}(X,\mathbb{C})$
such that the $C_0$-semigroup $(T_{BF}(t))_{t \geq0}$ generated by
$A+BF$ is exponentially stable; see, e.g., Theorem~8.2.3 of \cite{Curtain2020}.
We assume by (A\ref{assump:b_f_cond}) that 
$B$ and $F$ have stronger boundedness properties related to
the parameter $\alpha$ than the standard boundedness properties
$B \in \mathcal{L}(\mathbb{C},X)$ and $F \in \mathcal{L}(X,\mathbb{C})$.
Assumptions similar to (A\ref{assump:b_f_cond}) are placed to
perturbation operators in the robustness analysis of
polynomial stability 
developed in \cite{Paunonen2011,Paunonen2012SS,Paunonen2013SS}.
Note that not all of 
(A\ref{assump:finite_unstable})--(A\ref{assump:b_f_cond})
are imposed in every result. In fact,
(A\ref{assump:imaginary}) is not used in Section~\ref{sec:spectrum}
except for Lemma~\ref{lem:T_minus_poly_stable}, while
(A\ref{assump:closed_loop}) is not imposed in Sections~\ref{sec:Integral_RT} and \ref{sec:Integral_RSTR}.

The following theorem is the main result of this paper, which
shows that polynomial stability is robust with respect to sampling.
\begin{thrm}
	\label{thm:SD_SS}
	If Assumption~\Rref{assum:for_MR} is satisfied, then
	there exists $\tau^*>0$ such that 
	the following statements hold
	for all $\tau \in (0,\tau^*)$:
	\begin{enumerate}
		\item
		The sampled-data system \eqref{eq:sampled_data_sys} is strongly stable.
		\item
		Let $0 < \delta \leq \alpha/2$, and assume that
		$\delta \leq 1$ or $\beta \geq \alpha$. Then, 
		for every initial state $x^0 \in \mathcal{D}^{\delta}$,
		the state $x$ of the sampled-data system \eqref{eq:sampled_data_sys} 
		satisfies
		\begin{equation}
			\label{eq:solution_conv}
			\|x(t)\| 
			=
			\begin{cases}
				o(t^{-\delta/\alpha}) & \text{if $0<\delta < \alpha /2$} \vspace{5pt}\\
				o \left(\sqrt{
					\dfrac{\log t}{t} }\right) & \text{if $\delta = \alpha /2$} 
			\end{cases}
		\end{equation}
		as $t\to \infty$.
	\end{enumerate}
\end{thrm}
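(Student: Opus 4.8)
The plan is to reduce the sampled-data system to the discrete dynamical system generated by $\Delta(\tau) = T(\tau) + \int_0^\tau T(s)BF\,ds$, whose $k$-th power describes the state at sampling instants, and then to establish discrete analogues of the resolvent conditions for strong and polynomial stability. First I would set up the standard intertwining: for $k\tau \le t < (k+1)\tau$ one has $x(t) = T(t-k\tau)x(k\tau) + \int_0^{t-k\tau}T(s)ds\,BFx(k\tau)$ and $x(k\tau) = \Delta(\tau)^k x^0$, so controlling $\|\Delta(\tau)^k x^0\|$ together with uniform boundedness of $(T(t))_{t\ge0}$ on the bounded interval $[0,\tau]$ yields the decay of $\|x(t)\|$; this also shows part (a) is equivalent to strong stability of $(\Delta(\tau)^k)_{k\in\mathbb{N}}$ and reduces part (b) to the estimate $\|\Delta(\tau)^k x^0\| = o(k^{-\delta/\alpha})$ (resp. $o(\sqrt{\log k/k})$) for $x^0\in\mathcal D^\delta$. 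I would also need that $\mathcal D^\delta$ is, up to norm equivalence, $D((-A)^\delta)$, which follows from the Riesz-spectral representation of $(-A)^\delta$ once $A$ is invertible (guaranteed by (A\ref{assump:imaginary}) together with (A\ref{assump:finite_unstable})).

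Next I would analyze the spectrum and resolvent of $\Delta(\tau)$. The key is that $\Delta(\tau) = T(\tau) + (\text{small in }\tau)$, but more precisely one should compare $\Delta(\tau)$ with $T_{BF}(\tau)$: writing $T_{BF}(\tau) = T(\tau) + \int_0^\tau T(\tau-s)BF\,T_{BF}(s)\,ds$ and estimating the difference $\Delta(\tau) - T_{BF}(\tau)$, which is $O(\tau^2)$ in an appropriate sense using $b\in\mathcal D^\beta$, $f\in\mathcal D^\gamma_*$ and Lemma~\Rref{lem:resolvent_estimate}. Since $(T_{BF}(t))_{t\ge0}$ is polynomially stable with parameter $\alpha$, Theorem~\Rref{thm:decay_charac} gives $\|R(is,A+BF)\| = O(|s|^\alpha)$ and the bound on $R(\lambda,A+BF)(-(A+BF))^\alpha$ over $\overline{\mathbb{C}_0}$. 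The goal is to transfer these estimates to $\Delta(\tau)$: show $\sigma(\Delta(\tau))\cap\{|z|\ge1\}$ consists only of finitely many points coming from the $\lambda_n\in\mathbb{C}_{-\omega}\cap(\mathbb{C}\setminus\Omega_{\alpha,\Upsilon})$ (handled by the eigenvalue structure of the Riesz-spectral operator and a $\tau$-small perturbation argument), that these are strictly inside the unit disc for $\tau$ small because $A+BF$ has spectrum in the open left half-plane, and — the crucial quantitative step — that $R(z,\Delta(\tau))$ satisfies the integral condition of \cite{Tomilov2001} on the unit circle (for strong stability) and a weighted version of it with weight $(|z|-1)^{-1}$-type factors reflecting the parameter $\alpha$ (for the polynomial rate). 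Concretely, for $z = e^{i\theta}$ close to $1$, $R(z,\Delta(\tau))$ should inherit the growth $O(|\theta|^{-\alpha})$ from $R(is,A+BF)$ via the correspondence $z\approx e^{i\tau s}$, uniformly for small $\tau$.

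I expect the main obstacle to be precisely this uniform-in-$\tau$ transfer of resolvent estimates near $z=1$: the eigenvalues $\lambda_n$ accumulate toward $i\mathbb{R}$ at rate $\alpha$, so $e^{\tau\lambda_n}$ accumulates toward the unit circle, and one must control the resolvent $R(z,\Delta(\tau))$ on a neighborhood of the whole unit circle (not just near isolated points), with constants independent of $\tau\in(0,\tau^*)$. The Riesz-spectral structure makes $R(z,\Delta(\tau))$ diagonalizable in the $(\phi_n,\psi_n)$ basis up to the finite-rank correction from $\int_0^\tau T(s)BF\,ds$, so the estimate should come down to a scalar bound on $|z - e^{\tau\lambda_n} - (\text{correction})|^{-1}$ summed against the perturbation, and here the condition (A\ref{assump:b_f_cond}) on $\beta+\gamma$ versus $\alpha$ is exactly what makes the finite-rank correction negligible relative to the diagonal part — mirroring how Lemma~\Rref{lem:resolvent_estimate} is used in the continuous-time perturbation theory. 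Once the weighted integral resolvent condition is in hand, the quantified decay $o(k^{-\delta/\alpha})$ for initial data in $\mathcal D^\delta = D((-A)^\delta)$ follows from the discrete characterization of polynomial decay of individual orbits (the discrete analogue of Lemma~2.3 of \cite{Wakaiki2021JEE} / the results behind Theorem~\Rref{thm:decay_charac}), with the borderline case $\delta = \alpha/2$ producing the extra $\sqrt{\log}$ factor as in the known Hilbert-space estimates; the restriction $\delta\le1$ or $\beta\ge\alpha$ enters when one needs $\Delta(\tau)$ (rather than just $T(\tau)$) to map $\mathcal D^\delta$ into itself boundedly, which requires enough smoothing from $b$.
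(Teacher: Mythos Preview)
Your high-level framework is right and matches the paper: reduce to the discrete semigroup $(\Delta(\tau)^k)_{k\in\mathbb N}$, show $\mathbb E_1\subset\rho(\Delta(\tau))$ for small $\tau$, and verify Tomilov-type integral resolvent conditions (and their weighted analogue, which the paper states as Proposition~\ref{prop:discrete_poly_decay}) to obtain strong stability and the $o(k^{-\delta/\alpha})$ / $o(\sqrt{\log k/k})$ rates.

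Where the proposal goes wrong is the mechanism for the resolvent estimates. Comparing $\Delta(\tau)$ with $T_{BF}(\tau)$ does not work: the difference $\Delta(\tau)-T_{BF}(\tau)=\int_0^\tau T(\tau-s)BF(I-T_{BF}(s))\,ds$ is only $O(\tau)$ in operator norm (there is no reason for $\|I-T_{BF}(s)\|\to 0$), while $\|R(z,T_{BF}(\tau))\|$ is unbounded on $\mathbb E_1$ since the spectrum of $T_{BF}(\tau)$ accumulates on the unit circle; a Neumann-series perturbation argument therefore cannot close, and a Fredholm argument gives no resolvent \emph{estimate}. The paper never compares with $T_{BF}(\tau)$. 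Instead it uses the Sherman--Morrison--Woodbury identity with base $T(\tau)$,
\[
R(z,\Delta(\tau))=R(z,T(\tau))+\frac{R(z,T(\tau))S(\tau)\,FR(z,T(\tau))}{1-FR(z,T(\tau))S(\tau)},
\]
and the whole proof hinges on the \emph{scalar} denominator: one first shows $|1-FR(\lambda,A)B|\ge\varepsilon$ on $\rho(A)\cap\overline{\mathbb C_0}$ (via the identity $(1-FR(\lambda,A)B)^{-1}=1+FR(\lambda,A+BF)B$ and Lemma~\ref{lem:resolvent_estimate}, which is where (A\ref{assump:b_f_cond}) enters), and then transfers this to $|1-FR(z,T(\tau))S(\tau)|\ge\varepsilon'$ on $\rho(T(\tau))\cap\overline{\mathbb E_1}$ for small $\tau$ by the approximation scheme of \cite{Rebarber2006} (Lemma~\ref{lem:discrete_time_trans_func_bound}). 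With the denominator uniformly bounded below, the integral conditions reduce to separate estimates on $\|R(z,T(\tau))x\|^2$ and on $\|R(z,T(\tau))S(\tau)\|^2\|FR(z,T(\tau))\|^2$, both handled via the Riesz-spectral diagonal form of $T(\tau)$ and the moment inequality. Your ``scalar bound on $|z-e^{\tau\lambda_n}-(\text{correction})|^{-1}$'' is not the right object; the correction is not diagonal, and the scalar that matters is the transfer function $1-FR(z,T(\tau))S(\tau)$.

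Finally, your reading of the condition ``$\delta\le 1$ or $\beta\ge\alpha$'' is off. It is not used to make $\Delta(\tau)$ preserve $\mathcal D^\delta$. In the paper it enters only in estimating the second SMW term: if $\beta\ge\alpha$ then $\sup_{z}\|R(z,T(\tau))S(\tau)\|<\infty$ directly (Lemma~\ref{lem:g_zero}); otherwise one needs $b_0:=\int_0^\tau T(s)b\,ds\in D(A)\subset\mathcal D^\delta$, which requires $\delta\le 1$ (Lemmas~\ref{lem:b_zero} and \ref{lem:bg_posi2}).
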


Let $\alpha,\delta >0$ and
consider the case $F = 0$ and $\lambda_n = -1/n^{\alpha} + in$ for $n \in \mathbb{N}$.
Then Assumption~\ref{assum:for_MR}
holds for all $B \in \mathcal{L}(\mathbb{C},X)$, where the constants $\beta$
and $\gamma$ in (A\ref{assump:b_f_cond}) are chosen
such that $\beta = 0$ and $\gamma > \alpha$.
We also have $\|T(t)(-A)^{-\delta}\| = O(t^{-\delta/\alpha})$
as $t \to \infty$, and
this decay rate is optimal in the sense that 
$\liminf_{t\to \infty} t^{\delta/\alpha}\|T(t)(-A)^{-\delta}\| >0$.
Therefore, 
one cannot replace $t^{-\delta/\alpha}$ in the estimate \eqref{eq:solution_conv} by 
functions with better decay rates.
Whether the logarithmic correction term $\sqrt{\log t}$ for the case $\delta = \alpha/2$
may be omitted remains open.

The assumption $\delta \leq 1$ implies $D(A) \subset \mathcal{D}^{\delta}$.
On the other hand, the assumption $\beta \geq \alpha$ leads to
the uniform boundedness of $\|R(z,T(\tau)) S(\tau)\|$ on an annulus
$\{z \in \mathbb{C}:
1<|z| < 1+\varepsilon\}$ with some sufficiently small $\varepsilon>0$
for a fixed $\tau >0$, where
$S(\tau) \in \mathcal{L}(\mathbb{C},X)$ is defined by
\begin{equation}
	\label{eq:S_def}
	S(\tau)u \coloneqq  \int^\tau_0 T(s) Buds,\quad u \in \mathbb{C}.
\end{equation}
We will employ these assumptions in Section~\ref{sec:Integral_RSTR}.

The proof of Theorem~\ref{thm:SD_SS} is divided into several steps.
In the next subsection, we prove the equivalence between the stability of
the sampled-data system \eqref{eq:sampled_data_sys}  and that of 
the discretized system.
Section~\ref{sec:resolvent_cond} is devoted to resolvent conditions
for the stability of discrete semigroups on Hilbert spaces.
To apply these resolvent conditions, in Section~\ref{sec:spectrum},
we investigate  the spectrum of the operator 
that represents the dynamics of the discretized system.
In Section~\ref{sec:resolvent_discretized_sys}, we complete the proof of 
Theorem~\ref{thm:SD_SS}, by using
the resolvent conditions presented in Section~\ref{sec:resolvent_cond}.

\subsection{Discretized system}
For $t\geq 0$,
define  $\Delta(t) \in \mathcal{L}(X)$ by
\begin{equation*}
	\Delta(t) \coloneqq T(t) + S(t) F.
\end{equation*}
Then the state $x$ of 
the sampled-data system \eqref{eq:sampled_data_sys}
satisfies
\begin{equation}
	\label{eq:discretized_sys}
	x\big((k+1)\tau\big) = \Delta(\tau) x(k\tau)
\end{equation}
for all $k\in \mathbb{N}_0$,
which we call the {\em discretized system}.

To prove Theorem~\ref{thm:SD_SS}, 
it suffices by the next result to investigate
the discrete semigroup
$(\Delta(\tau)^k)_{k \in \mathbb{N}}$.
\begin{prpstn}
	\label{prop:SD_DT}
	Let $A$ be the generator of a $C_0$-semigroup $(T(t))_{t\geq 0}$ on 
	a Banach space $X$. Let
	$B \in \mathcal{L}(\mathbb{C},X)$ and
	$F \in \mathcal{L}(X,\mathbb{C})$.
	The following statements hold for a fixed $\tau >0$:
	\begin{enumerate}
		\item
		The sampled-data system \eqref{eq:sampled_data_sys} is strongly stable
		if and only if the discrete semigroup $(\Delta(\tau)^k)_{k \in \mathbb{N}}$
		is strongly stable. 
		\item 
		Let $g\colon (0,\infty) \to \mathbb{R}$, and suppose that
		there exist constants $k_0 \in \mathbb{N}$ and $M_1,M_2>0$ such that 
		for all $k \in \mathbb{N}$ with $k \geq k_0$ and all $s \in [0,\tau)$,
		\begin{equation}
			\label{eq:f_bound}
			M_1 g(k\tau) \leq g(k) \leq M_2g(k\tau+s).
		\end{equation}
		Then
		the state $x$ of 
		the sampled-data system \eqref{eq:sampled_data_sys}
		with  initial state $x^0\in X$
		satisfies 
		\begin{equation}
			\label{eq:solution_conv_from_dist}
			\|x(t)\| = o \big(g(t) \big )\qquad (t \to \infty)
		\end{equation}
		if and only if $x^0$ satisfies
		\begin{align}
			\label{eq:dis_time_poly_decay}
			\|\Delta(\tau)^k x^0\| = 
			o\big(
			g(k)
			\big)\qquad (k \to \infty).
		\end{align}
	\end{enumerate}
\end{prpstn}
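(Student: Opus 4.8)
The plan is to reduce both statements to the elementary fact that, between two consecutive sampling instants, the state of \eqref{eq:sampled_data_sys} is obtained by applying a bounded operator to the sampled value. First I would write the mild solution on an interval $[k\tau,(k+1)\tau)$: since the input $u(t)=Fx(k\tau)$ is constant there, for $s\in[0,\tau)$,
\[
x(k\tau+s)=T(s)x(k\tau)+\int_0^s T(r)B\,dr\,Fx(k\tau)=\big(T(s)+S(s)F\big)x(k\tau)=\Delta(s)x(k\tau).
\]
Letting $s\uparrow\tau$ and iterating in $k$ gives $x(k\tau)=\Delta(\tau)^k x^0$, and hence
\[
x(k\tau+s)=\Delta(s)\Delta(\tau)^k x^0\qquad(k\in\mathbb{N}\cup\{0\},\ s\in[0,\tau)).
\]
I would also record the uniform bound $C_\tau:=\sup_{s\in[0,\tau]}\|\Delta(s)\|<\infty$; this is clear since $\sup_{s\in[0,\tau]}\|T(s)\|<\infty$ for any $C_0$-semigroup and $\|S(s)\|\leq s\,\|B\|\sup_{r\in[0,\tau]}\|T(r)\|$.

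Granting this, part (a) is immediate. If $\|x(t)\|\to0$, then in particular $\|\Delta(\tau)^k x^0\|=\|x(k\tau)\|\to0$. Conversely, if $\|\Delta(\tau)^k x^0\|\to0$, then $\|x(k\tau+s)\|\leq C_\tau\|\Delta(\tau)^k x^0\|$ tends to $0$ as $k\to\infty$ uniformly in $s\in[0,\tau)$, so $\|x(t)\|\to0$.

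For part (b) I would treat the two implications separately, using \eqref{eq:f_bound} to transfer the $o$-estimate between the arguments $k$, $k\tau$ and $k\tau+s$. If \eqref{eq:dis_time_poly_decay} holds, then given $\varepsilon>0$ choose $k_1$ with $\|\Delta(\tau)^k x^0\|\leq\varepsilon g(k)$ for $k\geq k_1$; writing a large $t$ as $t=k\tau+s$ with $k\geq\max\{k_0,k_1\}$ and $s\in[0,\tau)$,
\[
\|x(t)\|=\|\Delta(s)\Delta(\tau)^k x^0\|\leq C_\tau\varepsilon\,g(k)\leq C_\tau M_2\varepsilon\,g(k\tau+s)=C_\tau M_2\varepsilon\,g(t),
\]
which yields \eqref{eq:solution_conv_from_dist}. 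Conversely, if \eqref{eq:solution_conv_from_dist} holds, I restrict to the sampling instants: given $\varepsilon>0$ there is $k_1$ with $\|x(k\tau)\|\leq\varepsilon g(k\tau)$ for $k\geq k_1$, hence for $k\geq\max\{k_0,k_1\}$, using the left inequality in \eqref{eq:f_bound}, $\|\Delta(\tau)^k x^0\|=\|x(k\tau)\|\leq\varepsilon g(k\tau)\leq(\varepsilon/M_1)g(k)$, which is \eqref{eq:dis_time_poly_decay}.

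Beyond this bookkeeping there is essentially no difficulty; the only points meriting care are the well-posedness of the sampled-data state and the justification of the piecewise mild-solution representation with $u$ constant on each sampling interval — both standard for bounded $B$, $F$ and a $C_0$-semigroup — together with the finiteness of $C_\tau$.
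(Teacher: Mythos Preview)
Your proof is correct and follows essentially the same approach as the paper: the paper likewise uses the identity $x(k\tau+s)=\Delta(s)\Delta(\tau)^k x^0$, the uniform bound $K:=\sup_{0\leq s<\tau}\|\Delta(s)\|<\infty$, and the two inequalities in \eqref{eq:f_bound} to pass the $o$-estimate back and forth. The only cosmetic difference is that the paper cites an earlier work for part~(a) and absorbs the constants $C_\tau$, $M_2$ into $\varepsilon$ from the outset, whereas you carry them explicitly; both are equivalent since $\varepsilon$ is arbitrary.
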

\begin{proof}
	The statement a) has been proved in 
	Proposition~2.2 in \cite{Wakaiki2021SIAM}, and therefore 
	we show only the statement~b).
	Assume that \eqref{eq:solution_conv_from_dist} holds for
	the state $x$ of 
	the sampled-data system \eqref{eq:sampled_data_sys}
	with initial state $x^0\in X$.
	Take $\varepsilon >0$. There exists $t_1 >0$ such that 
	for all $t \geq t_1$,
	\[
	\|x(t)\| \leq \frac{\varepsilon}{M_1} g(t).
	\]
	Choose $k_1 \in \mathbb{N}$
	so that $k_1 \geq k_0$ and $k_1\tau \geq t_1$.
	By \eqref{eq:discretized_sys} and \eqref{eq:f_bound}, 
	we have that 
	\[
	\|\Delta(\tau)^k x^0\| =
	\|x(k \tau)\| \leq 
	\frac{\varepsilon}{M_1} g(k\tau)
	\leq \varepsilon g(k)
	\]
	for all $k \geq k_1$.
	Hence, \eqref{eq:dis_time_poly_decay} holds.
	
	Conversely, assume that $x^0 \in X$ satisfies \eqref{eq:dis_time_poly_decay}, and
	take $\varepsilon >0$. We have that 
	\[
	1\leq  K \coloneqq \sup_{0\leq s < \tau} \| \Delta(s) \| < \infty.
	\]
	Then $\|x(k\tau+s)\| \leq K \|x(k\tau)\|$ 	for all $k \in \mathbb{N}_0$ and $s \in [0,\tau)$.
	By assumption,
	there exists $k_2 \in \mathbb{N}$ such that  for all $k \geq k_2$,
	\[
	\|\Delta(\tau)^k x^0\| \leq 
	\frac{\varepsilon}{KM_2} g(k).
	\]
	Combining this estimate and \eqref{eq:discretized_sys}, we obtain
	\[
	\|x(k\tau+s)\|\leq \frac{\varepsilon}{M_2} g(k)
	\]
	for all $k \geq k_2$ and $s \in [0,\tau)$.
	It follows from \eqref{eq:f_bound} that 
	\[
	\|x(k\tau+s)\| 
	\leq \varepsilon g(k\tau +s)
	\]
	for all 
	$k \geq \max \{k_0,k_2 \}$ and $s \in [0,\tau)$.
	Thus, we obtain \eqref{eq:solution_conv_from_dist}.
\end{proof}

We immediately see that 
the condition \eqref{eq:f_bound} holds for
$g(t) \coloneqq t^{-\delta}$ with $\delta >0$.
The function $g$ defined by
\[
g(t) \coloneqq \sqrt{\frac{\log t}{t}}
\]
also satisfies the condition \eqref{eq:f_bound}.
In fact, we obtain
\begin{align*}
	\frac{\log (k\tau)}{k\tau} &=
	\frac{\log (k\tau)}{\tau \log k} \cdot \frac{\log k}{k} \\
	\frac{\log k}{k} &=
	\frac{\frac{\log k}{\log (k\tau+s)}}{ \frac{k}{k\tau+s}}  \cdot
	\frac{\log (k\tau+s)}{k\tau+s} 
	\leq 
	\frac{\frac{\log k}{\log (k\tau)}}{ \frac{k}{(k+1)\tau}} \cdot
	\frac{\log (k\tau+s)}{k\tau+s}
\end{align*}
for all $k \in \mathbb{N}$ with $k > \max\{1,1/\tau \}$
and all $s \in [0,\tau)$.
Therefore, there exists $k_0 \in \mathbb{N}$ such that 
for all $k \geq k_0$ and  $s \in [0,\tau)$,
\[
\frac{\sqrt{\tau}}{2} \sqrt{ \frac{\log (k\tau)}{k\tau}}
\leq \sqrt{\frac{\log k}{k}} \leq 2 \sqrt{\tau} \sqrt{\frac{\log (k\tau+s)}{k\tau+s}}.
\]

\section{Resolvent conditions for stability of discrete semigroups}
\label{sec:resolvent_cond}
First, we review resolvent characterizations of power boundedness and 
strong stability
of discrete semigroups on Hilbert spaces.
A resolvent characterization of power bounded discrete semigroups has been
obtained in Theorem II.1.12 of \cite{Eisner2010}, which is an analogue of
the characterization of uniformly bounded $C_0$-semigroups due to
\cite{Gomilko1999,Shi2000}.
Moreover, a
resolvent characterization of strongly stable discrete semigroups has been
developed in Theorem~3.11 of 
\cite{Tomilov2001}; see also Theorem II.2.23 of \cite{Eisner2010}.
\begin{thrm}
	\label{thm:strong_stability_resol}
	Let $X$ be a Hilbert space and 
	let $\Delta \in \mathcal{L}(X)$ satisfy $\mathbb{E}_1 \subset \rho(\Delta)$.
	Then the following statements hold:
	\begin{enumerate}
		\item
		The discrete semigroup $(\Delta^k)_{k \in \mathbb{N}}$ is power bounded
		if and only if 
		\begin{align*}
			\limsup_{r \downarrow 1} \,(r-1)
			\int^{2\pi}_0 
			\|R(re^{i\theta}, \Delta)x\|^2  d\theta &< \infty\quad \text{for all $x \in X$ and} \\
			\limsup_{r \downarrow 1} \,(r-1)
			\int^{2\pi}_0 
			\|R(re^{i\theta}, \Delta)^*y\|^2  d\theta &< \infty\quad \text{for all $y \in X$.}
		\end{align*}		
		\item	
		The discrete semigroup $(\Delta^k)_{k \in \mathbb{N}}$ is strongly stable
		if and only if
		\begin{align*}
			\lim_{r \downarrow 1} \, (r-1)
			\int^{2\pi}_0 
			\|R(re^{i\theta}, \Delta)x\|^2  d\theta &=0\quad \text{for all $x \in X$ and} \\
			\limsup_{r \downarrow 1} \,(r-1)
			\int^{2\pi}_0 
			\|R(re^{i\theta}, \Delta)^*y\|^2  d\theta &< \infty\quad \text{for all $y \in X$.}
		\end{align*}
	\end{enumerate}
\end{thrm}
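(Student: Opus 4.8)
The plan is to reduce everything to two elementary facts: the Neumann expansion of the resolvent outside the closed unit disc and Parseval's identity on the circle of radius $r>1$. Since $\mathbb{E}_1\subset\rho(\Delta)$, the spectral radius of $\Delta$ is at most $1$, so for every $r>1$ the series $\theta\mapsto R(re^{i\theta},\Delta)x=\sum_{k=0}^{\infty}r^{-k-1}e^{-i(k+1)\theta}\,\Delta^k x$ converges in $L^2([0,2\pi];X)$, and Parseval gives
\[
\frac1{2\pi}\int_0^{2\pi}\|R(re^{i\theta},\Delta)x\|^2\,d\theta=\sum_{k=0}^{\infty}\frac{\|\Delta^k x\|^2}{r^{2(k+1)}}.
\]
From this the two ``only if'' implications are routine: if $\|\Delta^k\|\le M$ for all $k$ the right-hand side is $\le M^2\|x\|^2 r^2/\big((r-1)(r+1)\big)$, so $(r-1)\int_0^{2\pi}\|R(re^{i\theta},\Delta)x\|^2\,d\theta\le\pi M^2\|x\|^2$ uniformly in $r$, and the same bound holds for $\Delta^*$ because $(\Delta^*)^k=(\Delta^k)^*$ has the same power bound; and if moreover $\Delta^k x\to0$, then for $\varepsilon>0$, choosing $K$ with $\|\Delta^k x\|^2<\varepsilon$ for $k\ge K$ and splitting the series at $K$ gives $(r-1)\sum_{k\ge0}\|\Delta^k x\|^2r^{-2(k+1)}\le(r-1)KM^2\|x\|^2+\varepsilon$, whose $\limsup$ as $r\downarrow1$ is $\le\varepsilon$. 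As strong stability implies power boundedness by the uniform boundedness principle, this disposes of the ``only if'' parts of a) and b).

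For the ``if'' direction of a) I would first upgrade the pointwise finiteness of the two $\limsup$'s to uniform estimates. For fixed $r\in(1,3/2]$ the quantity $\big((r-1)\int_0^{2\pi}\|R(re^{i\theta},\Delta)x\|^2\,d\theta\big)^{1/2}$ is a seminorm in $x$, continuous since $\sup_\theta\|R(re^{i\theta},\Delta)\|<\infty$ for such $r$, and for each fixed $x$ its supremum over $r\in(1,3/2]$ is finite (the only possible blow-up is at $r=1$, where the $\limsup$ is assumed finite). A Baire category argument then yields a constant $C$ with $(r-1)\int_0^{2\pi}\|R(re^{i\theta},\Delta)x\|^2\,d\theta\le C\|x\|^2$ for all $x$ and all $r\in(1,3/2]$, and likewise for $\Delta^*$. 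Differentiating the Neumann series gives $R(z,\Delta)^2=\sum_{k=0}^{\infty}(k+1)z^{-k-2}\Delta^k$, so extracting the appropriate Fourier coefficient of $\theta\mapsto\langle R(re^{i\theta},\Delta)^2x,y\rangle=\langle R(re^{i\theta},\Delta)x,R(re^{i\theta},\Delta)^*y\rangle$ yields
\[
(n+1)\langle\Delta^n x,y\rangle=\frac{r^{n+2}}{2\pi}\int_0^{2\pi}e^{i(n+2)\theta}\,\big\langle R(re^{i\theta},\Delta)x,\,R(re^{i\theta},\Delta)^*y\big\rangle\,d\theta.
\]
Cauchy--Schwarz, the uniform bounds, and the choice $r=1+\tfrac1{n+1}$ (so $r^{n+2}$ stays bounded while $(r-1)^{-1}=n+1$) give $(n+1)|\langle\Delta^n x,y\rangle|\le C'(n+1)\|x\|\,\|y\|$ for a constant $C'$ independent of $n$, hence $\sup_n\|\Delta^n\|<\infty$.

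For the ``if'' direction of b), the first condition already makes its $\limsup$ finite, so a) gives power boundedness, say $\|\Delta^k\|\le M$. Fix $x$ and set $a_k:=\|\Delta^k x\|^2\le M^2\|x\|^2$. If $\liminf_k a_k=\ell>0$, then $a_k\ge\ell/2$ for $k\ge N$ would force $(r-1)\sum_{k\ge0}a_kr^{-2(k+1)}\ge\tfrac{\ell}{2}(r-1)\sum_{k\ge N}r^{-2(k+1)}\to\ell/4$ as $r\downarrow1$, contradicting, via the Parseval identity, the hypothesis that the first integral tends to $0$; hence $\liminf_k\|\Delta^k x\|=0$. Finally, power boundedness gives $\|\Delta^k x\|=\|\Delta^{\,k-k_0}\Delta^{k_0}x\|\le M\|\Delta^{k_0}x\|$ for all $k\ge k_0$, so the tail of $(\|\Delta^k x\|)_k$ is small once a single term is, and together with $\liminf_k\|\Delta^k x\|=0$ this forces $\Delta^k x\to0$. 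The statement is classical (Tomilov; Eisner), and I expect the one genuinely delicate ingredient to be the combination ``$R(z,\Delta)^2$-identity plus Baire-type upgrade to a uniform resolvent bound'' used to prove power boundedness in a): this is the discrete analogue of the Gomilko--Shi estimate for $C_0$-semigroups and is the step that really exploits the Hilbert-space structure through Parseval, while everything else is Tauberian bookkeeping.
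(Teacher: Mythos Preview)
Your proof is correct, and the paper does not give its own proof of this theorem at all: it simply cites Theorem~II.1.12 and Theorem~II.2.23 of Eisner and Theorem~3.11 of Tomilov. Your argument is precisely the standard one found in those references---Parseval applied to the Neumann series, the uniform-boundedness upgrade, the $R(z,\Delta)^2$ Fourier-coefficient identity (which the paper records separately as Lemma~\ref{lem:T_powered}) combined with Cauchy--Schwarz and the choice $r=1+1/(n+1)$, and the elementary Tauberian step for strong stability. There is nothing to compare; you have reconstructed the cited proof.

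Two cosmetic remarks. First, in your ``only if'' bound for a) you wrote $M^2\|x\|^2 r^2/\big((r-1)(r+1)\big)$; the $r^2$ in the numerator is spurious (the geometric sum is $1/(r^2-1)$), though it does not affect the conclusion. Second, your Baire-category step is fine, but it may be cleaner to phrase it as the uniform boundedness principle applied to the bounded linear operators $x\mapsto\sqrt{r-1}\,R(r e^{i\cdot},\Delta)x$ from $X$ into $L^2([0,2\pi];X)$.
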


Next, we investigate a resolvent condition on the rate of decay for
discrete semigroups on Hilbert spaces. To this end,
the following equalities given in Lemma~II.1.11 of \cite{Eisner2010} are useful.
\begin{lmm}
	\label{lem:T_powered}
	Let $X$ be a Banach space and let $\Delta \in \mathcal{L}(X)$
	with spectral radius $\sr(\Delta)$. Then
	\[
	\Delta ^k= 
	\frac{r^{k+1}}{2\pi} \int^{2\pi}_0 e^{i\theta (k+1)} R(re^{i\theta},\Delta )d\theta = 
	\frac{r^{k+2}}{2\pi(k+1)} 
	\int^{2\pi}_0 e^{i\theta (k+1)} R(re^{i\theta},\Delta )^2d\theta
	\]
	for all $k \in \mathbb{N}$ and $r > \sr(\Delta)$.
\end{lmm}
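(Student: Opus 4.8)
The plan is to reduce both identities to a single ingredient, the Laurent (Neumann) expansion of the resolvent on the circle $|\lambda| = r$, combined with the orthogonality relation $\int_0^{2\pi} e^{im\theta}\,d\theta = 2\pi\,\delta_{m,0}$ for $m \in \mathbb{Z}$. Since $r > \sr(\Delta) = \limsup_{n\to\infty}\|\Delta^n\|^{1/n}$, every $\lambda$ with $|\lambda| = r$ lies in $\rho(\Delta)$, and
\[
R(\lambda,\Delta) = \sum_{n=0}^{\infty} \lambda^{-(n+1)}\,\Delta^n
\]
converges in the operator norm, uniformly on the compact contour $|\lambda| = r$. This uniform convergence is the fact that will license interchanging summation and the (Bochner) integral in every computation below.

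For the first identity I would substitute $\lambda = re^{i\theta}$ into the series so that the integrand becomes $\sum_{n\geq 0} r^{\,k-n}\,e^{i(k-n)\theta}\,\Delta^n$, and integrate term by term. The orthogonality relation annihilates every term except $n = k$, for which the angular integral contributes $2\pi$; the remaining scalar prefactor $r^{k+1}\cdot r^{-(k+1)}/(2\pi)\cdot 2\pi$ collapses to $1$, leaving exactly $\Delta^k$.

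For the squared-resolvent identity I would use the standard route through the holomorphic functional calculus and integration by parts. The first formula can be read as the Cauchy representation $\Delta^k = \tfrac{1}{2\pi i}\oint_{|\lambda|=r}\lambda^k R(\lambda,\Delta)\,d\lambda$ for $f(\lambda) = \lambda^k$ on a circle enclosing $\sigma(\Delta)$. Using the resolvent identity $\tfrac{d}{d\lambda}R(\lambda,\Delta) = -R(\lambda,\Delta)^2$ and integrating by parts on the closed contour (so the boundary term vanishes), one converts $\oint \lambda^{k+1} R(\lambda,\Delta)^2\,d\lambda$ into $(k+1)\oint \lambda^k R(\lambda,\Delta)\,d\lambda = 2\pi i\,(k+1)\Delta^k$; passing back to the angular variable $\lambda = re^{i\theta}$ then yields the squared-resolvent representation of $\Delta^k$. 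Equivalently, and more directly in the spirit of the first part, one differentiates the Neumann series termwise (again justified by uniform convergence of the differentiated series) to get
\[
R(\lambda,\Delta)^2 = \sum_{n=0}^{\infty} (n+1)\,\lambda^{-(n+2)}\,\Delta^n,
\]
whereupon the same orthogonality argument singles out a single Laurent coefficient and the prefactor $r^{k+2}/\big(2\pi(k+1)\big)$ is exactly the one that cancels the powers of $r$ and the combinatorial factor $(n+1)$ to return $\Delta^k$.

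The main obstacle is not conceptual but one of rigor: one must justify the termwise integration of both the resolvent series and its derivative, i.e.\ confirm that the relevant series converge in $\mathcal{L}(X)$ uniformly on $|\lambda| = r$. This follows from $r > \sr(\Delta)$ together with the root-test bound $\|\Delta^n\| \leq C_\rho\,\rho^n$ valid for any $\sr(\Delta) < \rho < r$, which makes the series dominated by a convergent geometric majorant on the contour. The only other point demanding care is the exponent and prefactor bookkeeping in the second identity, since the frequency of the integrand, the power of $r$, and the normalizing constant $1/(k+1)$ must be matched so that precisely one Laurent coefficient survives and collapses to $\Delta^k$.
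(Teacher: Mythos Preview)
Your approach is correct and is the standard one; the paper does not give a proof of this lemma at all but simply cites Lemma~II.1.11 of Eisner's monograph, where exactly the Neumann-series/integration-by-parts argument you sketch is used.

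One caution, however: if you actually carry out the ``exponent and prefactor bookkeeping'' you flag at the end, you will discover that the second identity as printed in the paper contains a typo. Parametrizing $\lambda = re^{i\theta}$ in the contour integral $\frac{1}{2\pi i(k+1)}\oint_{|\lambda|=r}\lambda^{k+1}R(\lambda,\Delta)^2\,d\lambda$ produces $\lambda^{k+1}\,d\lambda = i r^{k+2} e^{i\theta(k+2)}\,d\theta$, so the angular frequency in the second integral should be $k+2$, not $k+1$. Equivalently, your Laurent expansion $R(\lambda,\Delta)^2=\sum_{n\ge0}(n+1)\lambda^{-(n+2)}\Delta^n$ combined with $e^{i\theta(k+1)}$ singles out $n=k-1$ rather than $n=k$, and the result is $\frac{kr}{k+1}\Delta^{k-1}$, not $\Delta^k$. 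With $e^{i\theta(k+2)}$ everything collapses to $\Delta^k$ as it should. This discrepancy is harmless for the paper's application in Proposition~\ref{prop:discrete_poly_decay}, where only the modulus of the integrand is used, but your write-up should state and prove the correct identity.
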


We state the discrete analogue of Lemma~3.2 in \cite{Wakaiki2021JEE}.
\begin{prpstn}
	\label{prop:discrete_poly_decay}
	Let $(\Delta^k)_{k \in \mathbb{N}}$ be a power bounded
	discrete semigroup  on a Hilbert space $X$.
	The following statements hold for a fixed $x \in X$:
	\begin{enumerate}
		\item
		If 
		$
		\|\Delta^kx\| = o(k^{-\delta})
		$
		as $k \to \infty$ for some $0< \delta \leq 1/2$,
		then 
		\begin{align}
			\label{eq:log_cond_T}
			\lim_{r \downarrow 1}
			\Lambda_{\delta} (r)
			\int^{2\pi}_0 \|R(re^{i\theta}, \Delta) x\|^2 d\theta = 0,
		\end{align}
		where
		\[
		\Lambda_{\delta}(r) \coloneqq
		\begin{cases}
			(r-1)^{1-2\delta} & \text{if $0 < \delta < 1/2$} \\
			|\log (r-1)|^{-1} & \text{if $\delta = 1/2$.}
		\end{cases}
		\]
		\item
		If 
		\eqref{eq:log_cond_T} holds for some $0 < \delta < 1/2$, 
		then 
		$
		\|\Delta^kx\| = o(k^{-\delta})
		$
		as $k \to \infty$. On the other hand,
		if
		\eqref{eq:log_cond_T} holds for $\delta = 1/2$, 
		then 
		\begin{align*}
			\|\Delta^kx\| = o\left(\sqrt{
				\frac{\log k}{k}
			}\right)\qquad (k \to \infty).
		\end{align*}
	\end{enumerate}
\end{prpstn}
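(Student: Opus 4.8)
The plan is to reduce everything to a statement about the nonnegative sequence $a_k:=\|\Delta^k x\|^2$ through a Plancherel identity on the circle. Since $(\Delta^k)_{k\in\mathbb{N}}$ is power bounded we have $\sr(\Delta)\le 1$, hence $\mathbb{E}_1\subset\rho(\Delta)$ and the Laurent expansion $R(re^{i\theta},\Delta)x=\sum_{k\ge 0}(re^{i\theta})^{-(k+1)}\Delta^k x$ holds for every $r>1$; integrating against the orthonormal system $\{e^{ik\theta}\}$ on $[0,2\pi]$ yields
\[
\frac{1}{2\pi}\int_0^{2\pi}\|R(re^{i\theta},\Delta)x\|^2\,d\theta=\sum_{k=0}^\infty r^{-2(k+1)}a_k .
\]
Put $t:=r^{-2}\in(0,1)$; since $1-t=1-r^{-2}\sim 2(r-1)$ as $r\downarrow 1$, the quantity $\Lambda_\delta(r)$ is comparable to $(1-t)^{1-2\delta}$ when $0<\delta<1/2$ and to $|\log(1-t)|^{-1}$ when $\delta=1/2$, so \eqref{eq:log_cond_T} is equivalent to $\Lambda_\delta(r)\sum_{k\ge0}t^k a_k\to 0$ as $t\uparrow 1$. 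I shall also use two elementary consequences of power boundedness with $M:=\sup_{k}\|\Delta^k\|$: the crude bound $a_k\le M^2\|x\|^2$, and, more importantly, $a_k\le M^2 a_j$ for all $0\le j\le k$, which follows from $\Delta^k x=\Delta^{k-j}(\Delta^j x)$.

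For part a) I would run a routine Abelian argument. Given $\varepsilon>0$, choose $N$ with $a_k\le\varepsilon k^{-2\delta}$ for $k\ge N$ and split $\sum_k t^k a_k=\sum_{k<N}t^ka_k+\sum_{k\ge N}t^ka_k$. The head is bounded by a constant $C_N$ and is annihilated because $\Lambda_\delta(r)\to 0$ as $r\downarrow 1$. The tail is at most $\varepsilon\sum_{k\ge1}t^k k^{-2\delta}$, and by the elementary estimate $\sum_{k\ge1}t^kk^{-p}\le C_p(1-t)^{p-1}$ for $0<p<1$ (with $\sum_{k\ge1}t^kk^{-1}=-\log(1-t)$ in the case $\delta=1/2$) this is $O(\Lambda_\delta(r)^{-1})$. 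Hence $\limsup_{r\downarrow1}\Lambda_\delta(r)\sum_k t^k a_k=O(\varepsilon)$, and letting $\varepsilon\downarrow0$ gives \eqref{eq:log_cond_T}.

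Part b) is the Tauberian direction, which I expect to be the real obstacle, because an Abel-type mean of $(a_k)$ does not control individual terms in general; the device that repairs this is power boundedness. Fix a large $k$ and take $t=1-1/k$, so that $r=t^{-1/2}$ satisfies $r-1\asymp 1/k$. Using $a_j\ge M^{-2}a_k$ for $j\le k$ together with $t^j\ge t^k\ge c_0>0$ uniformly in $j\le k$, a geometric-sum estimate gives
\[
\sum_{j=0}^\infty t^j a_j\ge\sum_{j=\lceil k/2\rceil}^{k}t^j a_j\ge\frac{a_k}{M^2}\sum_{j=\lceil k/2\rceil}^{k}t^j\ge\frac{c\,k}{M^2}\,a_k
\]
for an absolute constant $c>0$, whence $a_k\le\frac{M^2}{ck}\sum_{j\ge0}t^ja_j$. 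Feeding in the hypothesis $\sum_{j\ge0}t^ja_j=o(\Lambda_\delta(r)^{-1})$ — which equals $o(k^{1-2\delta})$ when $0<\delta<1/2$ and $o(\log k)$ when $\delta=1/2$, since $r-1\asymp 1/k$ — we obtain $a_k=o(k^{-2\delta})$, resp. $a_k=o((\log k)/k)$, and taking square roots gives $\|\Delta^k x\|=o(k^{-\delta})$, resp. $\|\Delta^k x\|=o(\sqrt{(\log k)/k})$.

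The bookkeeping pitfalls I anticipate are: keeping the comparison $1-r^{-2}\sim 2(r-1)$ and the implied constants consistent across the two cases $\delta<1/2$ and $\delta=1/2$; justifying the term-by-term integration in the Plancherel step (legitimate since the resolvent series converges locally uniformly on $\{|z|=r\}$ for $r>1$); and confirming that the naive ``first attempt'' — combining the reconstruction formulas of Lemma~\ref{lem:T_powered} with Cauchy--Schwarz — is genuinely too lossy, as it only delivers $\|\Delta^k x\|=o(k^{1/2-\delta})$, so that the comparison $a_k\le M^2 a_j$ coming from power boundedness is indispensable for reaching the sharp exponent.
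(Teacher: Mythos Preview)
Your argument is correct. Part~a) coincides with the paper's proof: both use the Parseval identity to rewrite the integral as $\sum_{k\ge 0} r^{-2(k+1)}\|\Delta^k x\|^2$, split at a cutoff, and bound the tail via standard asymptotics for $\sum_k t^k k^{-2\delta}$.

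Part~b) is genuinely different. The paper follows the Gomilko--Shi--Feng route: it uses the second identity in Lemma~\ref{lem:T_powered} (the one with $R(re^{i\theta},\Delta)^2$ and the extra factor $1/(k+1)$), pairs with $y\in X$, writes $\langle R^2 x,y\rangle=\langle Rx,R^*y\rangle$, and applies Cauchy--Schwarz together with the adjoint resolvent bound $(r-1)\int_0^{2\pi}\|R(re^{i\theta},\Delta^*)y\|^2\,d\theta\le M_1^2\|y\|^2$, which comes from Theorem~\ref{thm:strong_stability_resol}.a) via the uniform boundedness principle; then $r=1+1/(k+1)$ gives the result. You avoid all of this: you stay with the Parseval identity from part~a) and exploit only the elementary almost-monotonicity $a_k\le M^2 a_j$ for $j\le k$, a one-line consequence of power boundedness, to extract $a_k$ from the Abel mean at $t=1-1/k$. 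Your approach is more self-contained (no adjoint, no uniform boundedness principle, no Lemma~\ref{lem:T_powered}), while the paper's has the advantage of reusing the same machinery that proves Theorem~\ref{thm:strong_stability_resol}. One correction to your final remark: the paper \emph{does} combine Lemma~\ref{lem:T_powered} with Cauchy--Schwarz, and it is not lossy once one uses the squared-resolvent formula and the adjoint split; only the naive version (first formula, or bounding $\|R^2x\|\le\|R\|\,\|Rx\|$) loses the factor $k^{1/2}$ you describe.
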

\begin{proof}
	a)
	Assume that $x \in X$ satisfies $\|\Delta^kx\| = o(
	k^{-\delta}
	)$ as $k \to \infty$ for some $0 < \delta \leq 1/2$.
	Take $\varepsilon>0$ and $1 < r < 2 $.
	There exists $k_0 \in \mathbb{N}$ such that 
	\begin{equation}
		\label{eq:Delta_small_o}
		\|\Delta^kx\|^2 \leq \frac{\varepsilon}{k ^{2\delta}}
	\end{equation}
	for all $k \geq k_0$.
	We obtain
	\begin{equation}
		\label{eq:parseval}
		\frac{1}{2\pi}
		\int^{2\pi}_0 \|R(re^{i\theta},\Delta) x\|^2 d\theta =
		\sum_{k=-\infty}^{\infty} 
		\left\|\frac{1}{2\pi}\int^{2\pi}_0 e^{ik\theta} R(re^{i\theta},\Delta) x d\theta\right\|^2
	\end{equation}
	by
	Parseval's equality for vector-valued functions, 
	which can be proved by the scalar-valued Parseval's equality
	as done for Plancherel's theorem in Section~1.8 of \cite{Arendt2001}.
	Noting that the first equality in Lemma~\ref{lem:T_powered} is true also for $k=0$,
	we have
	for each positive integer $k$,
	\[
	\frac{1}{2\pi}\int^{2\pi}_0 e^{ik\theta} R(re^{i\theta},\Delta) x d\theta
	= \frac{\Delta^{k-1}x}{r^k}.
	\]
	On the other hand, Cauchy's integral theorem implies that
	for each non-positive integer $k$,
	\[
	\int^{2\pi}_0 e^{ik\theta} R(re^{i\theta},\Delta) xd\theta
	= 
	-i r^{-k} \int_{\mathbb{T}_{1/r}} z^{-k} (I-z\Delta)^{-1}x dz = 0.
	\]
	Therefore, we have from the equality \eqref{eq:parseval}
	that
	\[
	\frac{1}{2\pi}
	\int^{2\pi}_0 \|R(re^{i\theta}, \Delta) x\|^2 d\theta
	=
	\sum_{k=0}^{\infty} \frac{\|\Delta^k x\|^2}{r^{2(k+1)}}.
	\]
	Since $(\Delta^k)_{k \in \mathbb{N}}$ is power bounded, it follows that 
	$M \coloneqq \sup_{k \in \mathbb{N}_0}\|\Delta^k\| < \infty$.
	Hence
	\begin{equation*}
		\sum_{k=0}^{k_0-1} \frac{\|\Delta^k x\|^2}{r^{2(k+1)}}
		\leq k_0 M^2 \|x\|^2.
	\end{equation*}
	
	First suppose that $0 < \delta < 1/2$. Then the estimate 
	\eqref{eq:Delta_small_o} yields
	\[
	\sum_{k=k_0}^{\infty} \frac{\|\Delta^k x\|^2}{r^{2(k+1)}} 
	\leq \frac{\varepsilon}{r^2} 
	\sum_{k=k_0}^\infty \frac{1}{k^{2\delta}r^{2k}} \leq 
	\frac{\varepsilon}{r^2} \int_0^{\infty} 
	\frac{1}{t^{2\delta} r^{2t}} dt.
	\]
	We have that 
	\begin{align*}
		\int_0^{\infty} 
		\frac{1}{t^{2\delta} r^{2t}} dt =
		\int_0^{\infty} 
		\frac{e^{-2t \log r }}{t^{2\delta}} dt 
		=
		\frac{\Gamma(1-2\delta)}{(2 \log r)^{1-2\delta}},
	\end{align*}
	where $\Gamma$ is the Gamma function.
	Since
	\[
	\frac{\log (v+1)}{v} \geq \frac{1}{2} 
	\]
	for all $0 < v < 1$, it follows that 
	\begin{equation*}
		\sum_{k=k_0}^{\infty} \frac{\|\Delta^k x\|^2}{r^{2(k+1)}}  \leq 
		\frac{\varepsilon \Gamma(1-2\delta)}{r^2(r-1)^{1-2\delta}}.
	\end{equation*}
	Hence
	\[
	\limsup_{r\downarrow 1} \, (r-1)^{1-2\delta}
	\sum_{k=0}^{\infty} \frac{\|\Delta^k x\|^2}{r^{2(k+1)}} \leq 	
	\limsup_{r\downarrow 1}\, (r-1)^{1-2\delta}
	k_0 M^2 \|x\|^2 + \limsup_{r\downarrow 1}\,\frac{\varepsilon \Gamma(1-2\delta)}{r^2}  \leq \varepsilon \Gamma(1-2\delta).
	\]
	Since $\varepsilon >0$ was arbitrary, 
	the desired conclusion \eqref{eq:log_cond_T} holds for $0 < \delta < 1/2$.
	
	Next we consider the case $\delta = 1/2$.
	By the well-known formula for the first polylogarithm (see, e.g.,
	p.~3 of \cite{Hain1994}), we obtain
	\[
	\sum_{k=1}^\infty \frac{1}{kr^{2k}} = \log \frac{r^2}{r^2-1}.
	\]
	Moreover,
	\[
	\log \frac{r^2}{r^2-1} = 
	\log 
	\frac{r}{r-1} + \log \frac{r}{r+1}
	\leq \log 
	\frac{r}{r-1} = \log r +  |\log (r-1)|.
	\]
	Therefore, the estimate \eqref{eq:Delta_small_o} gives
	\begin{equation*}
		\sum_{k=k_0}^{\infty} \frac{\|\Delta^k x\|^2}{r^{2(k+1)}} 
		\leq \frac{\varepsilon}{r^2} 
		\sum_{k=k_0}^\infty \frac{1}{kr^{2k}} \leq 
		\frac{\varepsilon}{r^2} (\log r + |\log (r-1)|).
	\end{equation*}
	Since $|\log (r-1)|^{-1} \to 0$ as $r \downarrow 1$, we have
	\[
	\limsup_{r\downarrow 1}\, |\log (r-1)|^{-1} 
	\sum_{k=0}^{\infty} \frac{\|\Delta^k x\|^2}{r^{2(k+1)}} \leq 	
	\limsup_{r\downarrow 1}\, |\log (r-1)|^{-1} 
	\left(k_0 M^2 \|x\|^2  
	+ \frac{\varepsilon \log r}{r^2}
	\right) + \limsup_{r\downarrow 1}\, \frac{\varepsilon}{r^2}  \leq \varepsilon.
	\]
	This proves that 
	\eqref{eq:log_cond_T} holds for $\delta = 1/2$.
	
	b) 	
	By Lemma~\ref{lem:T_powered} and the Cauchy-Schwartz inequality,
	we have that for all $x,y \in X$, $r > 1$, and $k \in \mathbb{N}$,
	\begin{align*}
		|\langle \Delta^k x, y 
		\rangle| 
		&\leq 
		\frac{r^{k+2}}{2\pi(k+1)}
		\int^{2\pi}_0 |
		\langle R(re^{i\theta},\Delta)^2x, y \rangle
		| d\theta \\
		&=
		\frac{r^{k+2}}{2\pi(k+1)}
		\int^{2\pi}_0 |
		\langle R(re^{i\theta},\Delta)x, R(re^{-i\theta},\Delta^*)y \rangle
		| d\theta \\
		&\leq
		\frac{r^{k+2}}{2\pi(k+1)}
		\left(
		\int^{2\pi}_0 \|R(re^{i\theta},\Delta)x \|^2 d\theta
		\right)^{1/2}
		\left(
		\int^{2\pi}_0 \|R(re^{i\theta},\Delta^*)y \|^2 d\theta
		\right)^{1/2}.
	\end{align*}
	Take $r_1>1$.
	Since $(\Delta^k)_{k \in \mathbb{N}}$ is power bounded, 
	Theorem~\ref{thm:strong_stability_resol}.a)
	and the uniform boundedness principle imply that
	there exists a constant $M_1 >0$ such that 
	\[
	(r-1)
	\int^{2\pi}_0 \|R(re^{i\theta},\Delta^*)y \|^2 d\theta 
	\leq M_1^2\|y\|^2
	\]
	for all $y \in X$ and $r \in (1,r_1)$.
	Hence
	\[
	\| \Delta^kx\| \leq 
	\frac{M_1}{2\pi} \cdot
	\frac{r^{k+2}}{(k+1)(r-1)}
	\sqrt{\frac{r-1}{\Lambda_{\delta}(r)}}
	\left(\Lambda_{\delta}(r)
	\int^{2\pi}_0 \|R(re^{i\theta},\Delta)x \|^2 d\theta
	\right)^{1/2}
	\]
	for all $x \in X$ and $r \in (1,r_1)$.
	Put $r \coloneqq 1 + \frac{1}{k+1}$. Then
	\[
	\frac{r^{k+2}}{(k+1)(r-1)} = \left(
	1+ \frac{1}{k+1}
	\right)^{k+2} \to e\qquad (k \to \infty).
	\]
	Moreover, we obtain
	\begin{align*}
		\frac{
			r-1}{\Lambda_{\delta}(r)}
		&=
		\begin{cases}
			(k+1)^{-2\delta} & \text{if $0< \delta < 1/2$} \vspace{5pt}\\
			\dfrac{\log (k+1)}{k+1} & \text{if $\delta = 1/2$}.
		\end{cases}
	\end{align*}
	Hence, there exists $k_1 \in \mathbb{N}$ such that 
	for all $k \geq k_1$,
	\[
	\frac{r^{k+2}}{(k+1)(r-1)}
	\sqrt{\frac{r-1}{\Lambda_{\delta}(r)}}
	\leq 
	\begin{cases}
		2ek^{-\delta}& \text{if $0< \delta < 1/2$} \vspace{5pt}\\
		2 e \sqrt{
			\dfrac{\log k}{k}} &  \text{if $\delta = 1/2$}.
	\end{cases}
	\]
	Combining this estimate with \eqref{eq:log_cond_T}, 
	we obtain $
	\|\Delta^kx\| = o(k^{-\delta})
	$ as $k \to \infty$ for  $0 < \delta < 1/2$ and
	\[
	\|\Delta^kx\| = o\left(\sqrt{\frac{\log k}{k}} \right)
	\] as $k \to \infty$ for $\delta = 1/2$.
\end{proof}

\section{Spectrum and sampling}
\label{sec:spectrum}
To apply Theorem~\ref{thm:strong_stability_resol}
to the 
discretized system \eqref{eq:discretized_sys},
we have to show that $\mathbb{E}_1 \subset \rho (\Delta(\tau))$
is satisfied.
The aim of this section is to prove the following theorem. 
\begin{thrm}
	\label{lem:resol_T_SF}
	If {\rm (A\ref{assump:finite_unstable})}, {\rm (A\ref{assump:closed_loop})}, 
	and {\rm (A\ref{assump:b_f_cond})} hold,
	then there exists $\tau^* >0$ 
	such that 
	$\mathbb{E}_1 \subset \rho (\Delta(\tau))$ 
	for all $\tau \in (0,\tau^*)$.
\end{thrm}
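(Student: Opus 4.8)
The plan is to analyse $\Delta(\tau)=T(\tau)+S(\tau)F$ via the rank-one structure of $S(\tau)F$ and an $O(\tau)$ comparison with the exact closed-loop propagator $T_{BF}(\tau)$. Since $S(\tau)Fx=\langle x,f\rangle s_\tau$ with $s_\tau:=\int_0^\tau T(s)b\,ds$, variation of parameters gives $\Delta(\tau)-T_{BF}(\tau)=\int_0^\tau T(\tau-s)BF\bigl(I-T_{BF}(s)\bigr)\,ds$, a norm-limit of finite-rank operators, hence compact. By (A\ref{assump:closed_loop}) the semigroup $(T_{BF}(t))_{t\ge0}$ is uniformly bounded, so $T_{BF}(\tau)^k=T_{BF}(k\tau)$ is bounded uniformly in $k$ and $\tau$; hence $\mathbb{E}_1\subset\rho(T_{BF}(\tau))$ and $\|R(z,T_{BF}(\tau))\|\le M_{BF}/(|z|-1)$ on $\mathbb{E}_1$ with $M_{BF}$ independent of $\tau$. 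Writing $zI-\Delta(\tau)=(zI-T_{BF}(\tau))\bigl(I-R(z,T_{BF}(\tau))(\Delta(\tau)-T_{BF}(\tau))\bigr)$ and using the Fredholm alternative, a point $z\in\mathbb{E}_1$ lies in $\rho(\Delta(\tau))$ unless it is an eigenvalue of $\Delta(\tau)$. From the Riesz-spectral representation $\sigma(T(\tau))=\overline{\{e^{\tau\lambda_n}:n\in\mathbb{N}\}}$, and by (A\ref{assump:finite_unstable}) only finitely many indices — say those in a set $\mathcal{N}_0$ — satisfy $\lambda_n\in\mathbb{C}_{-\omega}\cap(\mathbb{C}\setminus\Omega_{\alpha,\Upsilon})$, while every other $\lambda_n$ has $\re\lambda_n<0$; hence $\sigma(T(\tau))\cap\mathbb{E}_1=\{e^{\tau\lambda_n}:n\in\mathcal{N}_0,\ \re\lambda_n>0\}$ is finite.

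For $z=e^{\tau\lambda_{n_0}}\in\sigma(T(\tau))\cap\mathbb{E}_1$ a direct computation in the Riesz basis settles the eigenvalue question: expanding a putative eigenvector $x=\sum_n x_n\phi_n$ of $\Delta(\tau)$ and isolating the $n_0$-component yields $\langle x,f\rangle\langle b,\psi_{n_0}\rangle(e^{\tau\lambda_{n_0}}-1)=0$, whence $\langle x,f\rangle=0$ and then $x=0$; here (A\ref{assump:closed_loop}) enters through the fact that $\langle b,\psi_{n_0}\rangle\ne0$ and $\langle\phi_{n_0},f\rangle\ne0$ whenever $\re\lambda_{n_0}\ge0$ — otherwise $\lambda_{n_0}$ (or $\overline{\lambda_{n_0}}$) would be an eigenvalue of $A+BF$ (or of $(A+BF)^*$), contradicting the polynomial stability of $A+BF$. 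For $z\in\mathbb{E}_1\setminus\sigma(T(\tau))$ one has $zI-\Delta(\tau)=(zI-T(\tau))(I-R(z,T(\tau))S(\tau)F)$, so $z$ is an eigenvalue of $\Delta(\tau)$ if and only if the scalar return difference $\eta(z,\tau):=1-FR(z,T(\tau))S(\tau)$ vanishes. It therefore suffices to prove $\eta(z,\tau)\ne0$ for all $z\in\mathbb{E}_1\setminus\sigma(T(\tau))$ and $\tau$ small. For $|z|\ge1+\varepsilon_0$ (any fixed $\varepsilon_0>0$) this is immediate: $\|\Delta(\tau)-T_{BF}(\tau)\|\le c_0\tau$ together with the resolvent bound above gives, by a Neumann series, $\{|z|\ge1+\varepsilon_0\}\subset\rho(\Delta(\tau))$ as soon as $\tau<\varepsilon_0/(c_0M_{BF})$. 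What remains is the shrinking annulus $\mathcal{A}_\tau:=\{1<|z|<1+\varepsilon_0\}$.

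On $\mathcal{A}_\tau$ I would pass to the continuous-time loop. Set $\mu:=\tfrac1\tau\mathrm{Log}\,z\in\overline{\mathbb{C}_0}$. By (A\ref{assump:closed_loop}), $\overline{\mathbb{C}_0}\subset\rho(A+BF)$, which (via the rank-one resolvent identity relating $A+BF$ to $A$) forces $1-FR(\mu,A)B\ne0$ for $\mu\in\overline{\mathbb{C}_0}\cap\rho(A)$; moreover, since $FR(\mu,A)B=\sum_n\langle b,\psi_n\rangle\langle\phi_n,f\rangle/(\mu-\lambda_n)$ and, for $n\notin\mathcal{N}_0$, $|\mu-\lambda_n|\ge|\re\lambda_n|\ge\min(\omega,\Upsilon/|\im\lambda_n|^\alpha)$, the weighted summability $b\in\mathcal{D}^{\beta}$, $f\in\mathcal{D}_*^{\gamma}$ with $\beta+\gamma\ge\alpha$ (by Cauchy–Schwarz) shows that the tail of this series over $n\notin\mathcal{N}_0$ is bounded on $\overline{\mathbb{C}_0}$, uniformly in $\mu$; together with the finitely many resonant terms, $|1-FR(\mu,A)B|$ is then bounded below by a positive constant on $\overline{\mathbb{C}_0}$ outside small discs about the poles $\lambda_n$, $n\in\mathcal{N}_0$, $\re\lambda_n\ge0$, and large inside them. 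I would then compare $\eta(z,\tau)=1-FR(z,T(\tau))S(\tau)$ with $1-FR(\mu,A)B$. Splitting by the finite-dimensional spectral projection $P_u$ of $A$ onto $\mathrm{span}\{\phi_n:n\in\mathcal{N}_0\}$, the "unstable" $X_u$-part of $FR(z,T(\tau))S(\tau)$ is rational in $z$ and is handled away from the finitely many $e^{\tau\lambda_n}$, $n\in\mathcal{N}_0$; the "stable" $X_s$-part ($X_s:=\ker P_u$) equals $\int_0^\infty z^{-\lfloor t/\tau\rfloor-1}\langle T_s(t)(I-P_u)b,(I-P_u)^*f\rangle\,dt$ — a convergent integral on $\mathbb{E}_1$ because, by the cusp bound in (A\ref{assump:finite_unstable}) and Theorem~\ref{thm:decay_charac}, the semigroup $T_s:=T(\cdot)|_{X_s}$ is polynomially stable with parameter $\alpha$, so $\sr(T_s(\tau))\le1$ — while the $X_s$-part of $FR(\mu,A)B$ is $\int_0^\infty e^{-\mu t}\langle T_s(t)(I-P_u)b,(I-P_u)^*f\rangle\,dt$, bounded on $\overline{\mathbb{C}_0}$ by Lemma~\ref{lem:resolvent_estimate}. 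Using the decay $|\langle T_s(t)(I-P_u)b,(I-P_u)^*f\rangle|\le Ct^{-(\beta+\gamma)/\alpha}$ with $(\beta+\gamma)/\alpha\ge1$ (from (A\ref{assump:b_f_cond}) and the fractional-power decay estimates of Theorem~\ref{thm:decay_charac}) to control the difference between these two transforms, one should conclude that $|\eta(z,\tau)|$ stays bounded below on $\mathcal{A}_\tau$ away from small discs about the exceptional points, which have already been dealt with.

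The hard part is exactly this last comparison near $\partial\mathbb{D}_1$. When the cusp $\Omega_{\alpha,\Upsilon}$ contains infinitely many eigenvalues with $\re\lambda_n\to0^-$, the spectrum $\sigma(T(\tau))$ accumulates on $\partial\mathbb{D}_1$ and $R(z,T(\tau))$ blows up there, so the aliasing remainder $FR(z,T(\tau))S(\tau)-FR(\mu,A)B$ is not pointwise small; one must use the cusp estimate $\re\lambda_n\le-\Upsilon/|\im\lambda_n|^\alpha$ and the weighted summability $b\in\mathcal{D}^{\beta}$, $f\in\mathcal{D}_*^{\gamma}$, $\beta+\gamma\ge\alpha$ simultaneously to keep it under control, and the integer/strict-inequality dichotomy in (A\ref{assump:b_f_cond}) is precisely what permits passing between the domains of fractional powers of $A$ and of $A+BF$. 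A secondary point is that the compactness reduction in the first step is what makes the finitely many discrete eigenvalues sitting just outside $\partial\mathbb{D}_1$ amenable to the elementary eigenvector computation rather than to this delicate estimate.
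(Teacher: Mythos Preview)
Your Fredholm/compactness reduction and the direct eigenvector computation at the finitely many points $z=e^{\tau\lambda_{n_0}}\in\mathbb{E}_1$ are correct and give a clean alternative to the paper's Lemma~\ref{lem:circle_resol}. The large-$|z|$ region via a Neumann series and $\|\Delta(\tau)-T_{BF}(\tau)\|=O(\tau)$ is also fine. The genuine gap is the annulus argument. Your proposed comparison of $\eta(z,\tau)$ with $1-FR(\mu,A)B$, $\mu=\tau^{-1}\mathrm{Log}\,z$, does \emph{not} yield a small remainder: writing both stable parts as integrals against $g(t):=\langle T_s(t)(I-P_u)b,(I-P_u)^*f\rangle$ and subtracting, one gets $\int_0^\infty\bigl(z^{-\lfloor t/\tau\rfloor-1}-e^{-\mu t}\bigr)g(t)\,dt$, and for $\re\mu\ge0$ the pointwise bound $\lvert z^{-\lfloor t/\tau\rfloor-1}-e^{-\mu t}\rvert\le|\mu|\tau=|\mathrm{Log}\,z|$ is uniform in $t$ but \emph{not} small: as $z$ traverses the annulus, $|\mathrm{Log}\,z|$ ranges up to $\sqrt{\log^2(1+\varepsilon_0)+\pi^2}\approx\pi$. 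Even with $|g(t)|\le Ct^{-(\beta+\gamma)/\alpha}$ integrable at infinity (which already fails in the borderline case $\beta+\gamma=\alpha$ of (A\ref{assump:b_f_cond})(i)), the difference is only $O(1)$, not $o(1)$, so the lower bound $|1-FR(\mu,A)B|>\varepsilon_{\rm c}$ cannot be transferred. You correctly flag this as ``the hard part'' but the sketch you give does not overcome it.

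The paper's Lemma~\ref{lem:discrete_time_trans_func_bound} avoids this obstruction by \emph{not} comparing the infinite-dimensional stable parts at all. Instead it truncates both series at level $N$ and shows that each tail is \emph{individually} small, uniformly in $\lambda\in\overline{\mathbb{C}_0}$, $z\in\overline{\mathbb{E}_1}$, and --- crucially --- in $\tau>0$; the discrete-time tail estimate rests on the pointwise bound of Lemma~\ref{lem:frac_lam_bound}, $\bigl|\tfrac{1-e^{\tau\lambda_n}}{z-e^{\tau\lambda_n}}\cdot\tfrac{1}{\lambda_n}\bigr|\le\max(\Upsilon_1,\Upsilon_2|\lambda_n|^{\beta+\gamma})$, followed by Cauchy--Schwarz and the weighted summability in (A\ref{assump:b_f_cond}). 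With both tails below $\varepsilon$, only a fixed finite-dimensional head remains. That head is then handled by splitting $\overline{\mathbb{E}_1}$ into a small neighbourhood $\Omega_0=\{e^\mu:\re\mu\ge0,\ |\mu|<\eta\}$ of $1$, where the $z\leftrightarrow\mu$ correspondence \emph{does} give closeness of the finite heads, and its complement $\Omega_3$, on which the finite discrete head is shown to satisfy $|1+\sum_{n<N}\cdots|>\varepsilon_{\rm c}$ directly (this is the Rebarber--Townley argument cited in Step~3). The missing idea in your proposal is precisely this: make the stable tails small separately rather than comparing them, and handle the bulk of $\overline{\mathbb{E}_1}$ for the finite head without invoking the continuous-time loop.
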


First, we apply a spectral decomposition for $A$.
Next, we prove the inclusion $\mathcal{D}^{\beta} \subset
D((-A-BF)^{\widetilde \beta})$ for all $\widetilde \beta \in [0,\beta)$.
Using this inclusion,
we also show that $|1-F(\lambda I - A)^{-1}B|$ is bounded from below
by a positive constant on $\rho(A) \cap \overline{\mathbb{C}_0}$.
This estimate for the continuous-time system
leads to an analogous estimate for the discretized system, i.e., 
a lower bound of
$|1- F(zI - T(\tau))^{-1}S(\tau)|$ on
$\rho(T(\tau)) \cap \overline{\mathbb{E}_1}$. Finally,
the desired inclusion $\mathbb{E}_1 \subset \rho (\Delta(\tau))$
is proved.

\subsection{Spectral decomposition}
\label{sec:Spec_decomp}
We start by applying a spectral decomposition for $A$ under (A\ref{assump:finite_unstable}). A more general version of
spectral decompositions for unbounded operators can
be found in Lemma~2.4.7 of \cite{Curtain2020}
and Proposition~IV.1.16 of \cite{Engel2000}.

Since only finite elements of $(\lambda_n)_{n \in \mathbb{N}}$
are in 
$\mathbb{C}_{-\omega} \cap (\mathbb{C} \setminus \Omega_{\alpha,\Upsilon})$,
there exists  a smooth, positively oriented, and simple closed curve $\Phi$ in 
$\rho(A)$ containing $\sigma(A) \cap \mathbb{C}_0$ in its interior and 
$\sigma(A) \cap (\mathbb{C} \setminus \mathbb{C}_0)$ in its exterior. 
The operator
\begin{equation}
	\label{eq:projection}
	\Pi \coloneqq \frac{1}{2\pi i} \int_{\Phi} (\lambda I - A)^{-1} d\lambda 
\end{equation}
is a projection on $X$ and yields the decomposition
\[
X = X^+ \oplus X^-,
\]
where 
\[
X^+ \coloneqq \Pi X,\quad X^- \coloneqq (I - \Pi) X.
\]
We have that $\dim X^+ < \infty$. Moreover, $X^+$ and $X^-$ are 
$T(t)$-invariant for all $t \geq 0$. Define
\begin{align*}
	A^+ \coloneqq A|_{X^+} ,\quad A^- \coloneqq A|_{X^-}.
\end{align*}
Then
\[
\sigma(A^+) = \sigma(A) \cap \mathbb{C}_0,\quad
\sigma(A^-) = \sigma(A) \cap (\mathbb{C} \setminus \mathbb{C}_0).
\]
Let $N_{\rm a},N_{\rm b}  \in \mathbb{N}$ satisfy
\begin{align}
	\label{eq:Ns_def}
	\{\lambda_n:1\leq n \leq N_{\rm a} - 1 \}  &= \{\lambda_n 
	: n \in \mathbb{N}\} \cap \mathbb{C}_0
	= \sigma(A^+) \\
	\label{eq:lambda_cond_large}
	\{\lambda_n:1\leq n \leq N_{\rm b} - 1 \}  &= \{\lambda_n 
	: n \in \mathbb{N}\} \cap  \big(\mathbb{C}_{-\omega} \cap (\mathbb{C} \setminus \Omega_{\alpha,\Upsilon})\big)
\end{align}
by changing the order of $(\lambda_n)_{n \in \mathbb{N}}$ if necessary.
By construction, we obtain $N_{\rm b} \geq  N_{\rm a}$.
The series expansions of $A^+$ and $A^-$ are given by
\begin{align*}
	A^+x^+ &= \sum_{n=1}^{N_{\rm a}-1} 
	\lambda_n \langle x^+ , \psi_n \rangle \phi_n 
	\quad \text{for all~} x ^+ \in D(A^+) = X^+ \\
	A^-x^- &= \sum_{n=N_{\rm a}}^\infty \lambda_n \langle x^- , \psi_n \rangle \phi_n\quad \text{for all~} x^- \in D(A^-) =
	\left\{
	x^- \in X^-: \sum_{n=N_{\rm a}}^\infty |\lambda_n|^2\, |\langle x^-, \psi_n \rangle|^2 < \infty
	\right\}.
\end{align*}

For all $\lambda \in \rho(A)$, $X^+$ and $X^-$ are 
$(\lambda I - A)^{-1}$-invariant and
\[
(\lambda I - A^+)^{-1} =
(\lambda I - A)^{-1}|_{X^+},\quad (\lambda I - A^-)^{-1} =
(\lambda I - A)^{-1}|_{X^-}.
\]
For $t \geq 0$, we define
\[
T^+(t) \coloneqq T(t)|_{X^+},\quad T^-(t) \coloneqq T(t)|_{X^-}.
\]
Then $(T^+(t))_{t\geq 0}$ 
and $(T^-(t))_{t\geq 0}$ are $C_0$-semigroups with generators $A^+$ and $A^-$,
respectively.
The adjoint $\Pi^*$ 
is also a projection on $X$ and 
yields a spectral decomposition for $A^*$.
We define
\[
X^+_* \coloneqq \Pi^*X,\quad 
X^-_* \coloneqq (I-\Pi^*)X.
\]
The restriction $A^-_*  \coloneqq A^*|_{X^-_*}$ of the adjoint $A^*$
is the generator of a $C_0$-semigroup 
$(T^-_*(t))_{t\geq 0}$, where
\[
T^-_*(t)\coloneqq T(t)^*|_{X^-_*}
\]
for $t \geq 0$. 
Define 
\begin{align*}
	B^+ \coloneqq \Pi B,\quad B^- \coloneqq (I-\Pi)B,\quad 
	F^+ \coloneqq F|_{X^+},\quad F^- \coloneqq F|_{X^-}.
\end{align*}
From \eqref{eq:B_F_rep}, we have that
\begin{align*}
	B^+u = b^+ u,\quad B^- u = b^-u\quad &\text{for all~} u \in \mathbb{C} \\ 
	F^+x^+ = \langle x^+, f^+\rangle\quad &\text{for all~} x^+ \in X^+ \\
	F^-x^- = \langle x^-, f^-\rangle \quad &\text{for all~} x^- \in X^-, 
\end{align*}
where 
$b^+ \coloneqq \Pi b$, $b^- \coloneqq (I-\Pi)b$, 
$f^+ \coloneqq \Pi^*f$, and $f^- \coloneqq (I-\Pi^*)f$.

The polynomial stability of
$(T^{-}(t))_{t\geq 0}$
and $(T^{-}_*(t))_{t\geq 0}$ under {\rm (A\ref{assump:finite_unstable})} and 
{\rm (A\ref{assump:imaginary})}
is an immediate consequence of 
the equivalence between a) and c) in Theorem~\ref{thm:decay_charac}.
\begin{lmm}
	\label{lem:T_minus_poly_stable}
	If {\rm (A\ref{assump:finite_unstable})} and 
	{\rm (A\ref{assump:imaginary})} hold, then
	the $C_0$-semigroups $(T^{-}(t))_{t\geq 0}$
	and $(T^{-}_*(t))_{t\geq 0}$
	constructed as above
	are polynomially stable with parameter $\alpha$.
\end{lmm}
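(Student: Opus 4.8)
The plan is to verify that the generator $A^-$ (and, symmetrically, $A^-_*$) satisfies the three requirements in the definition of polynomial stability with parameter $\alpha$. Two of them are immediate. Since $\sigma(A^-)=\sigma(A)\cap(\mathbb{C}\setminus\mathbb{C}_0)$ and $\sigma(A)\cap i\mathbb{R}=\emptyset$ under (A1) and (A2), we get $i\mathbb{R}\subset\rho(A^-)$. For uniform boundedness, observe that $A^-$ is itself a Riesz-spectral operator on $X^-$, with eigenvalues $\{\lambda_n:n\ge N_{\rm a}\}$, eigenvectors $\{\phi_n\}_{n\ge N_{\rm a}}$, and biorthogonal family $\{\psi_n\}_{n\ge N_{\rm a}}$; since $\re\lambda_n\le 0$ for $n\ge N_{\rm a}$, the series representation of $(T^-(t))_{t\ge 0}$ together with the Riesz-basis inequalities gives $\|T^-(t)x\|^2\le M_{\rm b}\sum_{n\ge N_{\rm a}}e^{2t\re\lambda_n}|\langle x,\psi_n\rangle|^2\le (M_{\rm b}/M_{\rm a})\|x\|^2$ for all $t\ge 0$.

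It remains to establish $\|T^-(t)(A^-)^{-1}\|=O(t^{-1/\alpha})$ as $t\to\infty$. Because $(T^-(t))_{t\ge 0}$ is uniformly bounded with $i\mathbb{R}\subset\rho(A^-)$, the equivalence of a) and c) in Theorem~\ref{thm:decay_charac} reduces this to the resolvent bound $\|R(is,A^-)\|=O(|s|^\alpha)$ as $|s|\to\infty$. Applying the resolvent formula \eqref{eq:RS_resolvent} to the Riesz-spectral operator $A^-$ and using the Riesz-basis inequalities once more, $\|R(is,A^-)\|\le\sqrt{M_{\rm b}/M_{\rm a}}\,\big(\inf_{n\ge N_{\rm a}}|is-\lambda_n|\big)^{-1}$, so it suffices to bound $\inf_{n\ge N_{\rm a}}|is-\lambda_n|$ from below by a constant multiple of $|s|^{-\alpha}$ for all large $|s|$.

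For that lower bound I would split $\{n\ge N_{\rm a}\}$ into three groups. First, for $N_{\rm a}\le n\le N_{\rm b}-1$ (finitely many indices), $\re\lambda_n<0$ by (A2), so $|is-\lambda_n|\ge|\re\lambda_n|\ge c$ for a fixed $c>0$ and all $s\in\mathbb{R}$. Second, for $n\ge N_{\rm b}$ with $\re\lambda_n\le-\omega$, $|is-\lambda_n|\ge\omega$. Third, for the remaining $n\ge N_{\rm b}$, which by \eqref{eq:lambda_cond_large} lie in $\Omega_{\alpha,\Upsilon}$ and hence satisfy $\re\lambda_n\le-\Upsilon/|\im\lambda_n|^\alpha$: if $|s-\im\lambda_n|\ge 1$ then $|is-\lambda_n|\ge 1$, while if $|s-\im\lambda_n|<1$ and $|s|\ge 2$ then $|\im\lambda_n|\le|s|+1\le 2|s|$ and therefore $|is-\lambda_n|\ge|\re\lambda_n|\ge\Upsilon(2|s|)^{-\alpha}$. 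Taking the minimum over the three groups, $\inf_{n\ge N_{\rm a}}|is-\lambda_n|\ge\min\{c,\omega,1,\Upsilon(2|s|)^{-\alpha}\}$, which equals $\Upsilon(2|s|)^{-\alpha}$ once $|s|$ is large enough. This gives $\|R(is,A^-)\|=O(|s|^\alpha)$, hence (via Theorem~\ref{thm:decay_charac}) the polynomial stability of $(T^-(t))_{t\ge 0}$ with parameter $\alpha$.

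Finally, for $(T^-_*(t))_{t\ge 0}$ one runs the same argument through $A^*$: $A^*$ is a Riesz-spectral operator with eigenvalues $(\overline{\lambda_n})_{n\in\mathbb{N}}$, and since $\mathbb{C}_{-\omega}$, $\Omega_{\alpha,\Upsilon}$, and $i\mathbb{R}$ are each invariant under complex conjugation, $(\overline{\lambda_n})_{n\in\mathbb{N}}$ still satisfies (A1) and (A2) with the same constants; moreover $\Pi^*$ is the spectral projection of $A^*$ associated with $\sigma(A^*)\cap\mathbb{C}_0$, so $A^-_*$ is precisely the operator ``$A^-$'' built from $A^*$, and the conclusion follows by applying the case already proved to $A^*$. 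The only step that is not routine bookkeeping is the third group in the partition above: it is exactly the asymptotic approach of the eigenvalues to $i\mathbb{R}$ encoded by $\Omega_{\alpha,\Upsilon}$ that both forces and matches the resolvent growth rate $|s|^\alpha$.
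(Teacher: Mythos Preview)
Your proof is correct and follows exactly the route the paper indicates: invoke the equivalence between a) and c) in Theorem~\ref{thm:decay_charac}, and verify the resolvent growth condition c) via the explicit Riesz-spectral formula for $R(is,A^-)$ together with a case split on the location of the eigenvalues. The paper compresses all of this into a single sentence (``an immediate consequence of the equivalence between a) and c) in Theorem~\ref{thm:decay_charac}''), leaving the verification of uniform boundedness, $i\mathbb{R}\subset\rho(A^-)$, and the actual bound $\inf_{n\ge N_{\rm a}}|is-\lambda_n|\gtrsim|s|^{-\alpha}$ to the reader; you have simply supplied those details, and your three-group partition for the eigenvalues is precisely the natural one dictated by (A\ref{assump:finite_unstable}).
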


\subsection{Inclusion $\mathcal{D}^{\beta} \subset 
		D((-A-BF)^{\widetilde \beta})$ for $0 \leq \widetilde \beta < \beta $}
Let a Riesz-spectral operator  $A$ on a Hilbert space $X$ 
generate a $C_0$-semigroup. Let $B \in \mathcal{L}(\mathbb{C},X)$ and
$F \in \mathcal{L}(X,\mathbb{C})$
be such that $A+BF$
is the generator of  a uniformly bounded $C_0$-semigroup.
Then,
the fractional power $(-A-BF)^{\beta}$ is well defined for every 
$\beta >0$.
We will show that 
if 
$\ran (B) \subset \mathcal{D}^{\beta}$ for some $\beta >0$, then
$\mathcal{D}^{\beta} \subset D((-A-BF)^{\widetilde \beta})$ holds for all
$\widetilde \beta \in [0,\beta)$.
To this end, the following result
is useful; see Lemma~5.4 of \cite{Wakaiki2021JEE} for the proof.
\begin{lmm}	
	\label{lem:A_AD_domain}
	Let $X$ be a Banach space and let $V \in \mathcal{L}(X)$. 
	Suppose that $A$ and $A+V$ are the generators of exponentially stable 
	$C_0$-semigroups 
	on $X$. 
	Then $D((-A)^{\alpha_1}) \subset D((-A-V)^{\alpha_2})$
	for all $\alpha_1,\alpha_2 \in (0,1)$ with $\alpha_2 < \alpha_1$.
\end{lmm}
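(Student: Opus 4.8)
The plan is to reduce the inclusion to the boundedness of a single resolvent integral and to let the strict inequality $\alpha_2<\alpha_1$ secure its convergence at infinity. Set $P := -A$ and $Q := -(A+V) = P-V$. Since both semigroups are exponentially stable, $\sigma(A)$ and $\sigma(A+V)$ lie in an open left half-plane, so $P$ and $Q$ are invertible sectorial operators and all their fractional powers are well defined in the sense of \cite{Haase2006}. Because $V\in\mathcal{L}(X)$, the graph norms of $P$ and $Q$ are equivalent, so $D(Q)=D(P)$; moreover there is $M\geq 1$ with the uniform resolvent bound $\|(t+P)^{-1}\|\leq M/(1+t)$ for all $t\geq 0$, which follows from sectoriality together with invertibility.

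First I would record the standard representation of the negative fractional power: for $0<\alpha_1<1$,
\[
P^{-\alpha_1} = \frac{\sin\pi\alpha_1}{\pi}\int_0^\infty t^{-\alpha_1}(t+P)^{-1}\,dt,
\]
a Bochner integral converging in $\mathcal{L}(X)$. Since $\ran(P^{-\alpha_1}) = D(P^{\alpha_1})$, it suffices to show that $P^{-\alpha_1}x\in D(Q^{\alpha_2})$ for every $x\in X$. The idea is to apply the closed operator $Q^{\alpha_2}$ under the integral sign: for each $t>0$ the vector $(t+P)^{-1}x$ lies in $D(P)=D(Q)\subset D(Q^{\alpha_2})$, so $Q^{\alpha_2}(t+P)^{-1}x$ is defined, and provided the map $t\mapsto t^{-\alpha_1}Q^{\alpha_2}(t+P)^{-1}x$ is Bochner integrable, Hille's theorem for closed operators yields at once $P^{-\alpha_1}x\in D(Q^{\alpha_2})$ and $Q^{\alpha_2}P^{-\alpha_1}x = \frac{\sin\pi\alpha_1}{\pi}\int_0^\infty t^{-\alpha_1}Q^{\alpha_2}(t+P)^{-1}x\,dt$.

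The crux is therefore the estimate of $\|Q^{\alpha_2}(t+P)^{-1}\|$, which I would obtain from the moment (interpolation) inequality for the invertible sectorial operator $Q$, namely $\|Q^{\alpha_2}y\|\leq C_0\|y\|^{1-\alpha_2}\|Qy\|^{\alpha_2}$ for $y\in D(Q)$. Taking $y=(t+P)^{-1}x$, the resolvent bound gives $\|y\|\leq M(1+t)^{-1}\|x\|$, while the decomposition $Q(t+P)^{-1} = P(t+P)^{-1}-V(t+P)^{-1}$ is bounded uniformly in $t$, since $\|P(t+P)^{-1}\|=\|I-t(t+P)^{-1}\|\leq 1+M$ and $\|V(t+P)^{-1}\|\leq M\|V\|$. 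Combining these,
\[
\|Q^{\alpha_2}(t+P)^{-1}\| \leq C\,(1+t)^{\alpha_2-1}
\]
for a constant $C$ independent of $t$.

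It then remains to verify $\int_0^\infty t^{-\alpha_1}(1+t)^{\alpha_2-1}\,dt<\infty$. Near $t=0$ the integrand behaves like $t^{-\alpha_1}$, integrable exactly because $\alpha_1<1$; near $t=\infty$ it behaves like $t^{\alpha_2-\alpha_1-1}$, integrable precisely because $\alpha_2<\alpha_1$. This is the single place where the strict inequality is indispensable, and it is the point I would watch most carefully: the equal-exponent case $\alpha_2=\alpha_1$ makes the tail diverge, mirroring the well-known possible failure of the domain coincidence $D(P^{\alpha})=D(Q^{\alpha})$ under bounded perturbations (the square-root type obstruction on general Banach spaces). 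With integrability in hand, Hille's theorem applies and $P^{-\alpha_1}x\in D(Q^{\alpha_2})$ for all $x\in X$, so $D((-A)^{\alpha_1})=\ran P^{-\alpha_1}\subset D(Q^{\alpha_2})=D((-A-V)^{\alpha_2})$, as claimed.
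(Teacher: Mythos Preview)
Your argument is correct. The paper does not supply a proof of this lemma but refers to Lemma~5.4 of \cite{Wakaiki2021JEE}, so a line-by-line comparison is not possible here. Your self-contained route---representing $(-A)^{-\alpha_1}$ by the Balakrishnan integral $\frac{\sin\pi\alpha_1}{\pi}\int_0^\infty t^{-\alpha_1}(t+P)^{-1}\,dt$, bounding $\|Q^{\alpha_2}(t+P)^{-1}\|$ via the moment inequality for the invertible sectorial operator $Q=-(A+V)$, and then invoking Hille's theorem to pass the closed operator $Q^{\alpha_2}$ under the integral---is standard and sound. The sectoriality of $P$ and $Q$ (of angle $\pi/2$) and the uniform bound $\|(t+P)^{-1}\|\le M/(1+t)$ indeed follow from the Hille--Yosida estimates together with invertibility, and the strict inequality $\alpha_2<\alpha_1$ is used exactly where it must be, to make $\int_0^\infty t^{-\alpha_1}(1+t)^{\alpha_2-1}\,dt$ finite at infinity.
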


We investigate the relation between $D(A^n)$ and $D((A+BF)^n)$
for $n \in \mathbb{N}$.
\begin{lmm}
	\label{lem:integer_case}
	Let $A$ be a linear operator on a Banach space $X$ and let 
	$F \in \mathcal{L}(X,\mathbb{C})$.
	Define
	$B\in \mathcal{L}(\mathbb{C},X)$ by
	$Bu \coloneqq bu$ for $u \in \mathbb{C}$, where $b \in X$.
	Then the following assertion holds for all $n \in \mathbb{N}$:
	If $x \in D(A^n)$ and $b \in D(A^{n-1})$, then
	$x \in D((A+BF)^n)$ and 
	\begin{equation}
		\label{eq:A+BF_powered}
		(A+BF)^n x = A^n x+q_{n-1} A^{n-1}b + \cdots + q_0 b,
	\end{equation}
	where $q_{m} \coloneqq F(A+BF)^{n-m-1}x \in \mathbb{C}$ for  $m=0,\dots,n-1$.
\end{lmm}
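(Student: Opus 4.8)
The plan is to induct on $n$. For $n=1$ the claim is immediate: if $x \in D(A)$, then since $BF$ is bounded, $x \in D(A+BF) = D(A)$, and $(A+BF)x = Ax + b\,Fx = Ax + q_0 b$ with $q_0 = Fx$, which is exactly \eqref{eq:A+BF_powered} with $n=1$. For the inductive step, I would assume the statement holds for some $n \geq 1$ and prove it for $n+1$. Suppose $x \in D(A^{n+1})$ and $b \in D(A^n)$. Then in particular $x \in D(A^n)$ and $b \in D(A^{n-1})$, so by the inductive hypothesis $x \in D((A+BF)^n)$ and
\[
(A+BF)^n x = A^n x + q_{n-1} A^{n-1} b + \cdots + q_0 b,
\]
with $q_m = F(A+BF)^{n-m-1}x$ for $m = 0,\dots,n-1$. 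The goal is to apply $A+BF$ once more to this identity and check that the right-hand side lies in $D(A+BF) = D(A)$ so that the application is legitimate and produces the claimed formula.

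The key observation is that each term on the right-hand side of the displayed identity lies in $D(A)$: the term $A^n x$ lies in $D(A)$ because $x \in D(A^{n+1})$; the term $A^{n-1} b$ lies in $D(A)$ because $b \in D(A^n)$; and the lower-order terms $A^{n-2}b,\dots,b$ likewise lie in $D(A^n) \supset D(A)$. Hence $(A+BF)^n x \in D(A) = D(A+BF)$, so $x \in D((A+BF)^{n+1})$, and we may write
\[
(A+BF)^{n+1}x = (A+BF)\bigl(A^n x + q_{n-1} A^{n-1} b + \cdots + q_0 b\bigr).
\]
Now expand using $(A+BF)y = Ay + b\,Fy$ for $y \in D(A)$. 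Applying this to $y = A^n x$ gives $A^{n+1}x + b\,F(A^n x)$; applying it to each $y = q_m A^m b$ gives $q_m A^{m+1} b + q_m b\, F(A^m b)$. Collecting the terms that are genuine derivatives of $x$ and $b$, one gets $A^{n+1} x + q_{n-1} A^n b + \cdots + q_0 A b$ plus a sum of scalar multiples of $b$; the coefficient of $b$ is $F(A^n x) + q_{n-1} F(A^{n-1} b) + \cdots + q_0 F(b)$, which by linearity of $F$ equals $F\bigl((A+BF)^n x\bigr)$ using the inductive formula for $(A+BF)^n x$. Renaming: the new coefficients are $q'_{m} = q_{m-1} = F(A+BF)^{n-m}x = F(A+BF)^{(n+1)-m-1}x$ for $m=1,\dots,n$, and $q'_0 = F\bigl((A+BF)^n x\bigr) = F(A+BF)^{(n+1)-0-1}x$, which matches the asserted form \eqref{eq:A+BF_powered} with $n$ replaced by $n+1$.

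The only mild obstacle is bookkeeping — making sure that the application of $A+BF$ distributes correctly over the (finite) sum, that every intermediate vector to which $A+BF$ is applied genuinely lies in $D(A)$ (guaranteed by $b \in D(A^n)$ and $x \in D(A^{n+1})$), and that the scalar coefficients regroup into $F(A+BF)^{n+1-m-1}x$ as claimed. There is no analytic difficulty here since $BF$ is bounded and all operators involved are applied to vectors in their natural (algebraic) domains; the argument is purely algebraic once the domain inclusions are recorded. I would present the $n=1$ base case, then carry out the expansion in the inductive step carefully, highlighting that the coefficient of $b$ collapses to $F\bigl((A+BF)^n x\bigr)$ by the inductive hypothesis.
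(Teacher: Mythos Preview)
Your proposal is correct and follows essentially the same induction as the paper. The only cosmetic difference is that in the inductive step the paper writes $(A+BF)^{n+1}x = A\bigl((A+BF)^n x\bigr) + BF(A+BF)^n x$ and substitutes the inductive formula only into the first summand, so the coefficient $F(A+BF)^n x$ of $b$ appears immediately without the regrouping you perform; your term-by-term expansion reaches the same conclusion with a bit more bookkeeping.
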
	

\begin{proof}
	We prove the assertion by induction.
	In the case $n=1$, 
	$x \in D(A)$ satisfies $x \in D(A+BF)$ and $(A+BF)x = Ax + (Fx)b$.
	Now,
	assume that the assertion holds for some $n \in \mathbb{N}$.
	Let $x \in D(A^{n+1})$ and $b \in D(A^{n})$.
	Then $A^{n} x \in D(A)$ and
	\[
	A^{m} b \in D(A)
	\]
	for all $m = 0,\dots,n-1$.
	This and the inductive assumption imply
	\[
	(A+BF)^nx \in D(A) = D(A+BF),
	\] 
	and hence
	$x \in D((A+BF)^{n+1})$.
	Moreover,
	\begin{align*}
		(A+BF)^{n+1}x &= 
		A (A^n x+ (Fx) A^{n-1}b + \cdots + (F(A+BF)^{n-1} x) b ) +
		BF(A+BF)^{n}x\\
		&=
		A^{n+1} x + (Fx) A^{n}b + \cdots + (F(A+BF)^{n-1} x) Ab + (F(A+BF)^{n}x)b.  
	\end{align*}
	Thus, \eqref{eq:A+BF_powered} holds when $n$ is replaced by $n+1$.
\end{proof}

Combining Lemmas~\ref{lem:A_AD_domain} and \ref{lem:integer_case},
we obtain the following result.
\begin{lmm}
	\label{lem:ABF_domain}
	Let  $A$ be a Riesz-spectral operator on a Hilbert space $X$ 
	with simple eigenvalues $(\lambda_n)_{n \in \mathbb{N}}$
	such that 
	$\sup_{n \in \mathbb{N}} \re \lambda_n < \infty$ and 
	$0$ is not an accumulation point of the set $\{ \lambda_n : n \in \mathbb{N}  \}$.
	Define
	$B\in \mathcal{L}(\mathbb{C},X)$ by
	$Bu \coloneqq bu$ for $u \in \mathbb{C}$, where  $b \in \mathcal{D}^{\beta}$ for some $\beta >0$.
	Let $F \in \mathcal{L}(X,\mathbb{C})$ be such that
	$A+BF$ generates a uniformly bounded $C_0$-semigroup on $X$.
	Then for all $\widetilde \beta \in   [0,\beta)$,  one has
	$\mathcal{D}^{\beta} \subset D((-A-BF)^{\widetilde \beta})$;
	in particular $b \in D((-A-BF)^{\widetilde \beta})$.
\end{lmm}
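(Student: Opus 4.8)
The plan is to shift the operators so that they become exponentially stable, and then combine the explicit commutator formula of Lemma~\ref{lem:integer_case} (for the integer part of $\widetilde\beta$) with Lemma~\ref{lem:A_AD_domain} (for the fractional part). First I would fix a real number $\mu$ with $\mu > \max\{0, \sup_{n}\re\lambda_n\}$. Since the Riesz-spectral operator $A$ has growth bound $\sup_n\re\lambda_n$ and $A+BF$ is uniformly bounded, both $A-\mu I$ and $(A-\mu I)+BF = A+BF-\mu I$ generate exponentially stable $C_0$-semigroups, so $\mu I - A$ and $C := \mu I - A - BF$ are invertible sectorial operators on $X$. Because $\re(\mu - \lambda_n) \geq \mu - \sup_k\re\lambda_k > 0$ for every $n$, the quantities $|\mu - \lambda_n|$ and $\max\{1,|\lambda_n|\}$ are comparable uniformly in $n$; since moreover $\sum_n |\langle x,\psi_n\rangle|^2 \leq \|x\|^2/M_{\rm a} < \infty$ for every $x \in X$, a direct computation with the functional calculus (giving, as for \eqref{eq:RS_resolvent}, a diagonal representation of the fractional powers of $\mu I - A$) yields $D((\mu I - A)^\gamma) = \mathcal{D}^\gamma$ for all $\gamma \geq 0$. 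Finally, by the shift invariance of the domains of fractional powers of a sectorial operator (Chapter~3 of \cite{Haase2006}), $D((-A-BF)^{\widetilde\beta}) = D(C^{\widetilde\beta})$. Hence it suffices to prove $\mathcal{D}^\beta \subset D(C^{\widetilde\beta})$.

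Next I would write $\widetilde\beta = m + \theta$ with $m \in \mathbb{N}_0$ and $\theta \in [0,1)$; since $\widetilde\beta < \beta$ we have $m < \beta$, so $\mathcal{D}^\beta \subset D(A^m)$ and (when $m \geq 1$) $b \in \mathcal{D}^\beta \subset D(A^{m-1})$. Fix $x \in \mathcal{D}^\beta$. When $m \geq 1$, Lemma~\ref{lem:integer_case} gives $x \in D((A+BF)^k)$ for all $k \leq m$ together with the formula \eqref{eq:A+BF_powered}; using $D((A+BF)^m) = D(C^m)$, expanding $C^m = (\mu I - (A+BF))^m$ by the binomial theorem, and substituting \eqref{eq:A+BF_powered} gives
\begin{equation*}
C^m x = \sum_{j=0}^{m} a_j\, A^j x + \sum_{i=0}^{m-1} c_i\, A^i b
\end{equation*}
for suitable scalars $a_j, c_i$ (depending on $x$ through the coefficients in \eqref{eq:A+BF_powered}); for $m=0$ this reads $C^0 x = x$. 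Using $\langle A^j x, \psi_n\rangle = \lambda_n^{\,j}\langle x,\psi_n\rangle$, the bound $|\lambda_n| \leq |\mu| + |\mu - \lambda_n|$ together with $\inf_n |\mu - \lambda_n| > 0$, and $x,b \in \mathcal{D}^\beta = D((\mu I - A)^\beta)$, one checks that $A^j x \in D((\mu I - A)^{\beta - j})$ for $j \leq m$ and $A^i b \in D((\mu I - A)^{\beta - i})$ for $i \leq m-1$. Since $j \leq m$ and $i \leq m-1$, every term on the right-hand side lies in $D((\mu I - A)^{\theta'})$ with $\theta' := \beta - m > \theta$, and therefore $C^m x \in D((\mu I - A)^{\theta'})$.

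Finally, if $\theta = 0$ then $C^m x \in X = D(C^0)$ trivially; if $\theta > 0$, I would pick $\theta'' \in \big(\theta, \min\{\theta',1\}\big)$, so that $C^m x \in D((\mu I - A)^{\theta''})$ by nestedness of fractional domains, and apply Lemma~\ref{lem:A_AD_domain} with the operator $A - \mu I$ and $V := BF$ (noting $-(A-\mu I) = \mu I - A$ and $-(A-\mu I) - V = C$) to obtain $D((\mu I - A)^{\theta''}) \subset D(C^{\theta})$. In either case $C^m x \in D(C^{\theta})$, which together with $x \in D(C^m)$ and the additivity law $D(C^{m+\theta}) = \{ y \in D(C^m) : C^m y \in D(C^{\theta}) \}$ gives $x \in D(C^{m+\theta}) = D(C^{\widetilde\beta})$. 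Thus $\mathcal{D}^\beta \subset D(C^{\widetilde\beta}) = D((-A-BF)^{\widetilde\beta})$, and taking $x = b$ gives the last assertion. The step I expect to be the main obstacle is the middle one: the hypothesis $b \in \mathcal{D}^\beta$ must supply \emph{strictly more} smoothness than the integer condition $b \in D(A^{m-1})$ that Lemma~\ref{lem:integer_case} requires, because Lemma~\ref{lem:A_AD_domain} transfers a fractional domain across the bounded perturbation $BF$ only at the cost of a \emph{strict} drop of fractional order — so the surplus $\beta - \widetilde\beta > 0$ is exactly the budget that must be allocated between the integer differentiations and this final transfer — while one must simultaneously keep the standard but delicate facts about sectorial fractional powers (shift invariance, nestedness of domains, the additivity law) straight, especially in the case $\beta \geq 1$ where Lemma~\ref{lem:A_AD_domain} alone would not suffice and the structural form $Bu = bu$ is essential.
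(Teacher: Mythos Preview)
Your proof is correct and follows essentially the same approach as the paper: shift by a scalar so that both $A$ and $A+BF$ generate exponentially stable semigroups, identify $\mathcal{D}^\beta$ with the fractional domain of the shifted operator, apply Lemma~\ref{lem:integer_case} for the integer part and Lemma~\ref{lem:A_AD_domain} for the fractional part, and conclude via the first law of exponents and shift invariance of fractional domains. The only cosmetic differences are that the paper first reduces to the case where $\widetilde\beta$ and $\beta$ lie in the same open unit interval $(n,n+1)$ (whereas you handle general $\beta$ directly by taking $\theta'' < 1$), and that the paper applies Lemma~\ref{lem:integer_case} to the shifted operator $A_h$ rather than to $A$ followed by a binomial expansion.
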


\begin{proof}
	There exists $h >0$ such that $A_h \coloneqq A-h I$ generates 
	an exponentially stable $C_0$-semigroup on $X$. 
	To prove that  $
	\mathcal{D}^{\beta} = D((-A_h)^{\beta})
	$, 
	we take $r >0$ so that $\mathbb{D}_r$ has a finite number of the eigenvalues 
	$(\lambda_n)_{n \in \mathbb{N}}$. Let $N_{r}  \in \mathbb{N}$ satisfy
	\[
	\{\lambda_n:1\leq n \leq N_{r} - 1 \}  = \{\lambda_n 
	: n \in \mathbb{N}\} \cap \mathbb{D}_r
	\]
	by changing the order of $(\lambda_n)_{n \in \mathbb{N}}$ if necessary.
	We decompose $x \in X$ into
	\[
	x = \sum_{n=1}^{N_r-1} 
	\langle x , \psi_n \rangle \phi_n  + 
	\sum_{n=N_r}^{\infty} 
	\langle x , \psi_n \rangle \phi_n \eqqcolon
	x_1+ x_2.
	\]
	By construction,
	$x_1 \in \mathcal{D}^{\beta} \cap D((-A_h)^{\beta})$.
	Since $\inf_{n \geq N_r} |\lambda_n| \geq r >0$, the equivalence
	between 
	$x_2 \in \mathcal{D}^{\beta}$ and
	$x_2 \in D((-A_h)^{\beta})$ follows as
	in the proof of Lemma~3.2.11.c of \cite{Curtain2020}.
	Therefore, we obtain $
	\mathcal{D}^{\beta} = D((-A_h)^{\beta})
	$.
	
	Since 
	\[
	D\big((-A_h)^{\beta} \big) \subset D\Big((-A_h)^{\widetilde \beta} \Big),\quad 
	D\big((-A-BF)^{\beta}\big) \subset D\Big((-A-BF)^{\widetilde \beta}\Big)
	\] 
	for every $\widetilde \beta \in [0,\beta)$,
	it suffices to consider the case where $n < \widetilde \beta < \beta < n+1$
	for some $n \in \mathbb{N}_0$.
	Put $\beta_0 \coloneqq \beta - n$ and 
	$\widetilde \beta_0 \coloneqq \widetilde \beta - n$.	
	Take $x\in \mathcal{D}^{\beta} = D((-A_h)^{\beta}) $.
	We have from the first law of exponents (see, e.g., Proposition~3.1.1.c) of \cite{Haase2006}) that 
	\begin{align}
		D\big((-A_h)^\beta\big) &= \left\{
		x \in D(A_h^n ) : A_h^nx \in D\big((-A_h)^{\beta_0}\big) 
		\right\} \label{eq:A_beta_domain}\\
		D\Big((-A_h-BF)^{\widetilde \beta}\Big) &= \left\{
		x \in D\big((A_h+BF)^n\big) : (A_h+BF)^nx \in D\Big((-A_h-BF)^{\widetilde \beta_0}\Big) 
		\right\}.\label{eq:ABF_beta_domain}
	\end{align}
	Since $x,b \in D(A_h^{n})$, 
	Lemma~\ref{lem:integer_case} implies  that
	$x \in D((A_h+BF)^{n})$ and
	\begin{equation}
		\label{eq:Aeps_BF}
		(A_h+BF)^{n}x = A_h^nx + q_{n-1} A_h^{n-1}b + \cdots + q_0 b
	\end{equation}
	for some $q_0,\dots,q_{n-1} \in \mathbb{C}$.
	By $b\in D(A_h^{n}) $, 
	\[
	A_h^{m}b \in D(A_h) \subset   D\big((-A_h)^{\beta_0}\big) 
	\]
	for all $m = 0,\dots,n-1$.
	Moreover, we have from $x \in D((-A_h)^\beta)$ 
	and \eqref{eq:A_beta_domain} that 
	\[
	A_h^nx \in D\big((-A_h)^{\beta_0}\big).
	\]
	Hence $(A_h+BF)^{n}x \in D((-A_h)^{\beta_0}) $
	by \eqref{eq:Aeps_BF}.
	Lemma~\ref{lem:A_AD_domain} yields
	\[
	D\big((-A_h)^{\beta_0}\big) \subset D\Big((-A_h-BF)^{\widetilde \beta_0}\Big),
	\]
	and therefore \[
	(A_h+BF)^{n}x \in D\Big((-A_h-BF)^{\widetilde \beta_0}\Big).
	\]
	This and 
	\eqref{eq:ABF_beta_domain} give
	$x \in D((-A_h-BF)^{\widetilde \beta})$. Since 
	$D((-A_h-BF)^{\widetilde \beta}) = D((-A-BF)^{\widetilde \beta})$
	by Proposition~3.1.9.a) of \cite{Haase2006},
	we conclude that $\mathcal{D}^{\beta} \subset 
	D((-A-BF)^{\widetilde \beta})$.
\end{proof}

\subsection{Lower bound of $|1 - FR(z, T(\tau)) S(\tau)|$}
In this subsection, we complete the proof of Theorem~\ref{lem:resol_T_SF},
by showing  that 
$|1 - FR(z,T(\tau))S(\tau)|$ is bounded from below
by a positive constant on $\rho(T(\tau)) \cap \overline{\mathbb{E}_1}$.
First, we estimate $|F R(\lambda, A+BF)B|$
with the help of Lemma~\ref{lem:ABF_domain}.
\begin{lmm}
	\label{lem:FRB_bound}
	If {\rm (A\ref{assump:finite_unstable})}, {\rm (A\ref{assump:closed_loop})}, 
	and {\rm (A\ref{assump:b_f_cond})} hold, then
	there exist constants $M \geq 1$, $\widetilde \beta \in [0,\beta]$, and $
	\widetilde \gamma \in [0,\gamma]$ such that 
	\begin{equation}
		\label{eq:bf_inclusions}
		b \in D\Big((-A-BF)^{\widetilde \beta }\Big),\quad 
		f \in D\Big((-A^*-F^*B^*)^{\widetilde \gamma}\Big)
	\end{equation}
	and
	\begin{equation}
		\label{eq:F_RABF_B_bound}
		|F R(\lambda, A+BF)B| \leq M \big\|
		(-A-BF)^{\widetilde \beta} b \big\|\,\big\|
		(-A^*-F^*B^*)^{\widetilde \gamma} f \big\|
	\end{equation}
	for all $\lambda \in 
	\overline{\mathbb{C}_0}$.
\end{lmm}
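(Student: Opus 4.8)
The plan is to reduce the estimate \eqref{eq:F_RABF_B_bound} to Lemma~\ref{lem:resolvent_estimate} applied to the polynomially stable semigroup $(T_{BF}(t))_{t\ge 0}$ generated by $A+BF$, with the perturbation operators $B$ and $F$ viewed relative to this generator. The point is that Lemma~\ref{lem:resolvent_estimate} requires $\ran(B)\subset D((-A-BF)^{\widetilde\beta})$ and $\ran(F^*)\subset D((-A^*-F^*B^*)^{\widetilde\gamma})$ with $\widetilde\beta+\widetilde\gamma\ge\alpha$, so the first task is to produce such exponents $\widetilde\beta\le\beta$, $\widetilde\gamma\le\gamma$ out of the hypothesis (A\ref{assump:b_f_cond}). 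Once the inclusions \eqref{eq:bf_inclusions} are established, \eqref{eq:F_RABF_B_bound} is immediate: setting $U=\mathbb{C}$, $B\in\mathcal{L}(\mathbb{C},X)$ with $\ran(B)\subset D((-A-BF)^{\widetilde\beta})$ and $F\in\mathcal{L}(X,\mathbb{C})$ with $\ran(F^*)\subset D((-A^*-F^*B^*)^{\widetilde\gamma})$, Lemma~\ref{lem:resolvent_estimate} gives a constant $M\ge1$ (depending only on $\alpha$) such that $|FR(\lambda,A+BF)B|\le M\,\|(-A-BF)^{\widetilde\beta}B\|\,\|(-A^*-F^*B^*)^{\widetilde\gamma}F^*\|$ for all $\lambda\in\overline{\mathbb{C}_0}$, and $\|(-A-BF)^{\widetilde\beta}B\|=\|(-A-BF)^{\widetilde\beta}b\|$, $\|(-A^*-F^*B^*)^{\widetilde\gamma}F^*\|=\|(-A^*-F^*B^*)^{\widetilde\gamma}f\|$ by \eqref{eq:B_F_rep}.

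So the real work is choosing $\widetilde\beta,\widetilde\gamma$ and verifying \eqref{eq:bf_inclusions}, and here the two cases of (A\ref{assump:b_f_cond}) must be handled separately. In case (ii), $\beta+\gamma>\alpha$, so I would pick $\widetilde\beta<\beta$ and $\widetilde\gamma<\gamma$ with $\widetilde\beta+\widetilde\gamma\ge\alpha$ still achievable (shrink each exponent slightly while keeping the sum at least $\alpha$). Then $b\in\mathcal{D}^\beta\subset D((-A-BF)^{\widetilde\beta})$ follows directly from Lemma~\ref{lem:ABF_domain}, whose hypotheses — $A$ Riesz-spectral generating a $C_0$-semigroup (from (A\ref{assump:finite_unstable})), $b\in\mathcal{D}^\beta$, and $A+BF$ generating a uniformly bounded semigroup (from (A\ref{assump:closed_loop}), since polynomial stability entails uniform boundedness) — are all in force. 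For the adjoint inclusion I would apply the dual form of the same argument: $A^*$ is Riesz-spectral with eigenvalues $(\overline{\lambda_n})$ and eigenvectors $(\psi_n)$, $(A+BF)^* = A^*+F^*B^*$ generates $(T_{BF}(t)^*)_{t\ge0}$ which is also polynomially stable (hence uniformly bounded), and $f\in\mathcal{D}_*^\gamma$ plays the role of the control vector with the roles of $(\phi_n)$ and $(\psi_n)$ interchanged; thus Lemma~\ref{lem:ABF_domain} applied to $A^*$, with control vector $f$ and feedback $B^*$, yields $\mathcal{D}_*^\gamma\subset D((-A^*-F^*B^*)^{\widetilde\gamma})$, so $f\in D((-A^*-F^*B^*)^{\widetilde\gamma})$.

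Case (i), $\beta,\gamma\in\mathbb{N}_0$ with $\beta+\gamma\ge\alpha$, is the delicate one and I expect it to be the main obstacle, because Lemma~\ref{lem:ABF_domain} only gives the inclusion for exponents \emph{strictly} below $\beta$, whereas here I may need to use the full integer values $\widetilde\beta=\beta$, $\widetilde\gamma=\gamma$ to keep $\widetilde\beta+\widetilde\gamma\ge\alpha$ (if $\beta+\gamma=\alpha$ exactly, any shrinkage breaks the sum condition). The remedy is that for integer exponents one has an exact algebraic identity rather than an interpolation inequality: Lemma~\ref{lem:integer_case} shows that if $x\in D(A^n)$ and $b\in D(A^{n-1})$ then $x\in D((A+BF)^n)$, with $(A+BF)^nx$ expressed as a finite combination of $A^nx$ and $A^jb$ for $j<n$. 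Taking $x=b$ and $n=\beta$, the hypothesis $b\in\mathcal{D}^\beta=D((-A_h)^\beta)=D(A^\beta)$ (using the identification $\mathcal{D}^\beta=D((-A_h)^\beta)$ from the proof of Lemma~\ref{lem:ABF_domain}, valid since $A$ and $A_h$ differ by a bounded operator so their integer powers have the same domain, and $D((-A_h)^\beta)=D(A_h^\beta)$) gives both $b\in D(A^\beta)$ and $b\in D(A^{\beta-1})$, hence $b\in D((A+BF)^\beta)=D((-A-BF)^\beta)$; likewise $f\in D((A^*+F^*B^*)^\gamma)$. Thus in case (i) I take $\widetilde\beta=\beta$, $\widetilde\gamma=\gamma$ and the inclusions \eqref{eq:bf_inclusions} hold with equality of exponents, while $\widetilde\beta+\widetilde\gamma=\beta+\gamma\ge\alpha$ as required by Lemma~\ref{lem:resolvent_estimate}. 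With \eqref{eq:bf_inclusions} in hand in both cases, the estimate \eqref{eq:F_RABF_B_bound} follows as described above, completing the proof.
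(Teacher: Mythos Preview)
Your proposal is correct and follows essentially the same approach as the paper: handle case~(i) via Lemma~\ref{lem:integer_case} to get $b\in D((A+BF)^\beta)$ and $f\in D((A^*+F^*B^*)^\gamma)$ with $\widetilde\beta=\beta$, $\widetilde\gamma=\gamma$, handle case~(ii) via Lemma~\ref{lem:ABF_domain} (and its dual) with slightly shrunk exponents $\widetilde\beta<\beta$, $\widetilde\gamma<\gamma$ still satisfying $\widetilde\beta+\widetilde\gamma\ge\alpha$, and then conclude by Lemma~\ref{lem:resolvent_estimate}. Your write-up is in fact more detailed than the paper's, which compresses the argument to three sentences.
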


\begin{proof}
	If $\beta,\gamma \in \mathbb{N}_0$, then
	we have from Lemma~\ref{lem:integer_case} that 
	$b \in D((A+BF)^\beta)$ and $f \in D((A^*+F^*B^*)^\gamma)$.
	Therefore, \eqref{eq:bf_inclusions} holds with
	$\widetilde \beta = \beta$ and $\widetilde \gamma = \gamma$.
	If $\beta+\gamma > \alpha$, then
	Lemma~\ref{lem:ABF_domain} implies that \eqref{eq:bf_inclusions} 
	and $\widetilde \beta +  \widetilde \gamma \geq \alpha$ hold
	for some $\widetilde \beta \in [0,\beta)$ and $
	\widetilde \gamma \in [0,\gamma)$.
	For such $\widetilde \beta$ and $\widetilde \gamma$,
	the inequality \eqref{eq:F_RABF_B_bound} immediately follows from Lemma~\ref{lem:resolvent_estimate}.
\end{proof}

Using
Lemma~\ref{lem:FRB_bound}, we next obtain  an estimate of 
$|1 - FR(\lambda,A)B|$.
\begin{lmm}
	\label{lem:cont_time_trans_func_bound}
	If {\rm (A\ref{assump:finite_unstable})}, {\rm (A\ref{assump:closed_loop})}, 
	and {\rm (A\ref{assump:b_f_cond})} hold, then
	there exists $\varepsilon>0$ such that 
	\[
	|1 - FR(\lambda,A)B| > \varepsilon
	\]
	for all $\lambda \in \rho(A) \cap \overline{\mathbb{C}_0}$.
\end{lmm}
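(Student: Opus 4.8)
The plan is to express the scalar quantity $1 - FR(\lambda,A)B$ as the reciprocal of the corresponding quantity for the closed-loop generator $A+BF$, for which Lemma~\ref{lem:FRB_bound} already supplies a uniform bound on $\overline{\mathbb{C}_0}$.

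First I would record that $\overline{\mathbb{C}_0} \subset \rho(A+BF)$: by {\rm (A\ref{assump:closed_loop})} the semigroup $(T_{BF}(t))_{t\geq0}$ is uniformly bounded, so $\sigma(A+BF) \subset \{\re\lambda \leq 0\}$, and since $i\mathbb{R}\subset\rho(A+BF)$ this forces $\sigma(A+BF)\subset\mathbb{C}\setminus\overline{\mathbb{C}_0}$. Write $C := A+BF$, so that $A = C + B(-F)$ and
\[
\lambda I - A = (\lambda I - C)\big(I + R(\lambda,C)BF\big)
\qquad \text{for } \lambda \in \rho(C).
\]
Now fix $\lambda \in \rho(A)\cap\overline{\mathbb{C}_0}$. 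Then $\lambda \in \rho(C)$ as well, so the identity above shows $I + R(\lambda,C)BF$ is boundedly invertible on $X$ and $R(\lambda,A) = \big(I + R(\lambda,C)BF\big)^{-1}R(\lambda,C)$.

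Next I would exploit the rank-one structure. Since $Bu = bu$ and $Fx = \langle x,f\rangle$, the operator $BF$ equals $\langle\,\cdot\,,f\rangle b$, hence $R(\lambda,C)BF = \langle\,\cdot\,,f\rangle\, u$ with $u := R(\lambda,C)b$. Writing $g(\lambda) := FR(\lambda,C)B = \langle u, f\rangle$, the bounded invertibility of $I + \langle\,\cdot\,,f\rangle u$ is equivalent to $1 + g(\lambda)\neq 0$, and the Sherman--Morrison formula gives $\big(I + \langle\,\cdot\,,f\rangle u\big)^{-1} = I - (1+g(\lambda))^{-1}\langle\,\cdot\,,f\rangle u$. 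Substituting into the expression for $R(\lambda,A)$ and applying $F$ on the left and $B$ on the right, a one-line computation yields $FR(\lambda,A)B = g(\lambda)/(1+g(\lambda))$, and therefore
\[
1 - FR(\lambda,A)B = \frac{1}{1 + g(\lambda)} = \frac{1}{1 + FR(\lambda,A+BF)B}.
\]
Finally I would invoke Lemma~\ref{lem:FRB_bound}, which applies because {\rm (A\ref{assump:finite_unstable})}, {\rm (A\ref{assump:closed_loop})}, {\rm (A\ref{assump:b_f_cond})} hold: there is a constant $C_0 := M\,\|(-A-BF)^{\widetilde\beta}b\|\,\|(-A^*-F^*B^*)^{\widetilde\gamma}f\|$ with $|g(\lambda)| = |FR(\lambda,A+BF)B| \leq C_0$ for all $\lambda \in \overline{\mathbb{C}_0}$. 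Hence $|1 - FR(\lambda,A)B| = 1/|1+g(\lambda)| \geq 1/(1+C_0) =: \varepsilon > 0$ for every $\lambda \in \rho(A)\cap\overline{\mathbb{C}_0}$, which is the assertion.

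There is no real analytic obstacle: all the substance sits in Lemma~\ref{lem:FRB_bound} (ultimately Lemma~\ref{lem:resolvent_estimate}), which is exactly where polynomial stability and the smoothness conditions {\rm (A\ref{assump:b_f_cond})} on $b$ and $f$ are consumed to get boundedness of the closed-loop transfer function on the whole closed right half-plane. The only mild care needed is the bookkeeping of the feedback resolvent identity and the rank-one inversion, and checking that it is legitimate on all of $\rho(A)\cap\overline{\mathbb{C}_0}$ — which the inclusion $\overline{\mathbb{C}_0}\subset\rho(A+BF)$ together with $\lambda\in\rho(A)$ guarantees.
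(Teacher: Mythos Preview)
Your proof is correct and follows essentially the same route as the paper: both derive the reciprocal identity $1 - FR(\lambda,A)B = \big(1 + FR(\lambda,A+BF)B\big)^{-1}$ via a rank-one resolvent/Sherman--Morrison computation and then apply Lemma~\ref{lem:FRB_bound} to bound the closed-loop transfer function on $\overline{\mathbb{C}_0}$. The only cosmetic difference is that the paper factors $\lambda I - A - BF$ through $(\lambda I - A)$ whereas you factor $\lambda I - A$ through $(\lambda I - C)$, but this leads to the same scalar identity.
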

\begin{proof}
	Let $\lambda \in \rho (A)$. Then
	\[
	\lambda I - A - BF = (\lambda I - A)(I - (\lambda I - A)^{-1}BF).
	\]
	Since $\sigma((\lambda I - A)^{-1}BF) \setminus \{0 \} 
	= \sigma(F(\lambda I - A)^{-1}B) \setminus \{0 \} $ (see, e.g., 
	(3) in Section~III.2 of \cite{Gohberg1990}),
	we obtain
	\[
	\lambda \in  \rho (A+BF) \quad \Leftrightarrow \quad 
	1 \in \rho \big((\lambda I - A)^{-1}BF\big) \quad \Leftrightarrow \quad 
	1 \in \rho (F(\lambda I - A)^{-1}B).
	\]
	Moreover, a simple calculation shows that
	\begin{equation}
		\label{eq:extension}
		\frac{1}{1 - F(\lambda I - A)^{-1}B} = F(\lambda I - A - BF)^{-1}B + 1
	\end{equation}
	for all $\lambda \in \rho(A) \cap \rho(A+BF)$.
	Hence Lemma~\ref{lem:FRB_bound} implies that 
	there exists  $\varepsilon>0$ such that 
	$|1 - FR(\lambda ,A)B| > \varepsilon$ for 
	all $\lambda \in \rho(A) \cap \overline{\mathbb{C}_0} $.
\end{proof}

The estimate on the continuous-time system obtained in 
Lemma~\ref{lem:cont_time_trans_func_bound} leads to
an analogous estimate on the discretized system
as in the robustness analysis of exponential stability \cite{Rebarber2006} and 
strong stability \cite{Wakaiki2021SIAM}.
To show this, we use 
the series expansion of $R(z,T(\tau)) S(\tau)$ under  (A\ref{assump:finite_unstable}),
where $S(\tau)$ is defined by \eqref{eq:S_def}.
If $0 \in \rho(A)$, then $S(\tau)$ 
is written as
\[
S(\tau) = A^{-1} (T(\tau) - I) B = 
\sum_{n=1}^{\infty} \frac{e^{\tau \lambda_n} -1}{\lambda_n}
\langle b, \psi_n \rangle  \phi_n,
\]
and hence the series expansion of $R(z,T(\tau)) S(\tau)$ is given by
\begin{equation}
	\label{eq:RS_series}
	R(z,T(\tau)) S(\tau) = \sum_{n=1}^{\infty} \frac{e^{\tau \lambda_n}-1}{z - e^{\tau \lambda_n}} \cdot
	\frac{\langle b, \psi_n \rangle }{\lambda_n} \phi_n
\end{equation}
for $z \in \rho(T(\tau))$.
If $0 \not\in \rho(A)$, then $0$ is a simple eigenvalue of $A$ under (A\ref{assump:finite_unstable}). Let 
$n_0 \in \mathbb{N}$ satisfy
$\lambda_{n_0} = 0$. Analogously, we obtain
\begin{equation}
	\label{eq:RS_series_zero_case}
	R(z,T(\tau)) S(\tau) = 
	\frac{\tau}{z - 1} 
	\langle b, \psi_{\lambda_{n_0}} \rangle \phi_{\lambda_{n_0}}
	+\sum_{n\not=n_0} \frac{e^{\tau \lambda_n}-1}{z - e^{\tau \lambda_n}} \cdot
	\frac{\langle b, \psi_n \rangle }{\lambda_n} \phi_n
\end{equation}
for $z \in \rho(T(\tau))$.

Recall that $N_{\rm b} \in \mathbb{N}$ is chosen so that 
\eqref{eq:lambda_cond_large} holds.
Therefore, $\lambda_n \not=0$ for all $n \geq N_{\rm b}$.
For 
each $n \geq N_{\rm b}$, the
$n$th term of the  series expansion of $R(z,T(\tau)) S(\tau)$
satisfies the following estimate, which 
is obtained from  arguments similar to those in the proofs of Theorem~2.1 in \cite{Rebarber2006} 
and Lemma~3.8 in \cite{Wakaiki2021SIAM}.
\begin{lmm}
	\label{lem:frac_lam_bound}
	Suppose that {\rm (A\ref{assump:finite_unstable})}  holds. Let 
	$\widetilde \alpha \geq \alpha$ and let
	$N_{\rm b} \in \mathbb{N}$ be such that \eqref{eq:lambda_cond_large}
	holds.
	Then there exist constants $\Upsilon_1,\Upsilon_2 >0$ such that 
	\begin{equation}
		\label{eq:case1-3_bound}
		\left|
		\frac{1 - e^{\tau \lambda_n}}{z - e^{\tau \lambda_n}}
		\right| \, \left|
		\frac{1}{\lambda_n}
		\right| \leq \max\big\{\Upsilon_1, 
		\Upsilon_2 |\lambda_n|^{\widetilde \alpha}
		\big\}
	\end{equation}
	for all $\tau >0$, $z \in \overline{\mathbb{E}_1}$, and
	$n \geq N_{\rm b}$.
\end{lmm}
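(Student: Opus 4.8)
The plan is to collapse the whole composite quantity to the single scalar $1/|\re\lambda_n|$ by two crude but \emph{uniform} estimates — one on the numerator, one on the denominator — and then to dispose of $1/|\re\lambda_n|$ using the dichotomy built into {\rm (A\ref{assump:finite_unstable})}. This mirrors the arguments of Theorem~2.1 in \cite{Rebarber2006} and Lemma~3.8 in \cite{Wakaiki2021SIAM}.

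First I would record the consequence of {\rm (A\ref{assump:finite_unstable})} that is actually used: by \eqref{eq:lambda_cond_large}, every $\lambda_n$ with $n \ge N_{\rm b}$ lies \emph{outside} $\mathbb{C}_{-\omega}\cap(\mathbb{C}\setminus\Omega_{\alpha,\Upsilon})$, hence either $\re\lambda_n \le -\omega$ or $\lambda_n \in \Omega_{\alpha,\Upsilon}$; in the latter case $\lambda_n\notin\mathbb{R}$ and $\re\lambda_n \le -\Upsilon/|\im\lambda_n|^\alpha < 0$. In both cases $\re\lambda_n < 0$, so $\lambda_n\neq 0$ and $|e^{\tau\lambda_n}| = e^{\tau\re\lambda_n} < 1$ for every $\tau>0$.

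Next come the two estimates. For the denominator, if $z\in\overline{\mathbb{E}_1}$ then $|z|\ge 1 > |e^{\tau\lambda_n}|$, so $|z - e^{\tau\lambda_n}| \ge |z| - |e^{\tau\lambda_n}| \ge 1 - e^{\tau\re\lambda_n} > 0$. For the numerator, writing $e^{\tau\lambda_n} - 1 = \int_0^\tau \lambda_n e^{t\lambda_n}\,dt$ gives
\[
|1 - e^{\tau\lambda_n}| \le |\lambda_n|\int_0^\tau e^{t\re\lambda_n}\,dt = \frac{|\lambda_n|\,(1 - e^{\tau\re\lambda_n})}{|\re\lambda_n|}.
\]
Dividing the second bound by the first and then cancelling $|\lambda_n|$ yields, uniformly in $\tau>0$, $z\in\overline{\mathbb{E}_1}$ and $n \ge N_{\rm b}$,
\[
\left|\frac{1 - e^{\tau\lambda_n}}{z - e^{\tau\lambda_n}}\right|\left|\frac{1}{\lambda_n}\right| \le \frac{1}{|\re\lambda_n|}.
\]
Finally I would bound the right-hand side by the case split: if $\re\lambda_n \le -\omega$ then $1/|\re\lambda_n| \le 1/\omega$; if $\lambda_n\in\Omega_{\alpha,\Upsilon}$ then $|\re\lambda_n|\ge \Upsilon/|\im\lambda_n|^\alpha$, so $1/|\re\lambda_n| \le |\im\lambda_n|^\alpha/\Upsilon \le |\lambda_n|^\alpha/\Upsilon \le \max\{1,|\lambda_n|^{\widetilde\alpha}\}/\Upsilon$, the last step using $\widetilde\alpha\ge\alpha$. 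Taking $\Upsilon_1 := \max\{1/\omega,1/\Upsilon\}$ and $\Upsilon_2 := 1/\Upsilon$ gives \eqref{eq:case1-3_bound}.

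The argument is essentially elementary and I do not expect a genuine obstacle; the only points needing a little care are (i) verifying that {\rm (A\ref{assump:finite_unstable})} indeed forces $\re\lambda_n < 0$ for all $n \ge N_{\rm b}$ — so that $|e^{\tau\lambda_n}| < 1$ and the denominator estimate is legitimate — and (ii) passing from $|\im\lambda_n|^\alpha$ to $|\lambda_n|^{\widetilde\alpha}$ when $|\lambda_n|$ is small, which is why the "$\max$" with a constant appears. The structural observation to highlight is that all dependence on $\tau$ and on $z$ disappears after these two uniform bounds, leaving a purely spectral inequality that is dictated entirely by the geometry of the set in {\rm (A\ref{assump:finite_unstable})}.
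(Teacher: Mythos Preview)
Your proof is correct and in fact cleaner than the paper's. Both arguments reduce the quantity to a bound on $1/|\re\lambda_n|$ and then finish by the same dichotomy $\re\lambda_n\le-\omega$ versus $\lambda_n\in\Omega_{\alpha,\Upsilon}$, but the routes to that reduction differ. The paper splits into three cases according to whether $\tau\re\lambda_n\le -1$, uses boundedness of $g(\lambda)=(1-e^\lambda)/\lambda$ on the strip $-1\le\re\lambda\le 0$ together with a $2\pi i$-periodicity argument, invokes the mean value theorem for the real part, and relies in case~(i) on the uniform lower bound $|\lambda_n|\ge\kappa$ coming from \eqref{eq:lambda_cond_large}. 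Your single integral estimate $|1-e^{\tau\lambda_n}|\le|\lambda_n|(1-e^{\tau\re\lambda_n})/|\re\lambda_n|$ produces exactly the factor $1-e^{\tau\re\lambda_n}$ needed to cancel the denominator bound and delivers $1/|\re\lambda_n|$ directly, with no $\tau$-dependent case split and no appeal to $\kappa$. The only residual difference is bookkeeping: the paper's $\Upsilon_2$ carries a factor $\kappa^{-(\widetilde\alpha-\alpha)}$, whereas you absorb the small-$|\lambda_n|$ regime into $\Upsilon_1$ via the $\max\{1,|\lambda_n|^{\widetilde\alpha}\}$ step; either choice yields \eqref{eq:case1-3_bound}.
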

\begin{proof}
	Take $\tau > 0$ and $z \in \overline{\mathbb{E}_1}$.
	Let $N_{\rm b} \in \mathbb{N}$ be as in \eqref{eq:lambda_cond_large}.
	For $n \geq N_{\rm b}$, we divide the proof into three
	cases: (i)~$\tau \re \lambda_n \leq -1$; (ii)~$\tau \re \lambda_n > -1$
	and $\re\lambda_n \leq -\omega$; and (iii)~$\tau \re \lambda_n > -1$
	and $\lambda_n \in \Omega_{\alpha,\Upsilon}$. 
	For all cases, the following inequality is useful:
	\begin{equation}
		\label{eq:dis_time_bound}
		\left|
		\frac{1 - e^{\tau \lambda_n}}{z - e^{\tau \lambda_n}}
		\right|
		\leq \frac{|1 - e^{\tau \lambda_n}|}{1 - e^{\tau \re \lambda_n}} = 
		\frac{\frac{|1 - e^{\tau \lambda_n}|}{\tau  |\lambda_n|}}{ \frac{1 - e^{\tau \re \lambda_n}}{\tau |\re \lambda_n|}} \cdot
		\frac{|\lambda_n|}{|\re \lambda_n|}
	\end{equation}
	for all $n \geq N_{\rm b}$.

	First we consider the case~(i) $\tau \re \lambda_n \leq -1$.
	By \eqref{eq:lambda_cond_large},
	we obtain
	$|\lambda_n| \geq \kappa$
	for all $n \geq N_{\rm b}$ and some 
	$\kappa >0$. 
	Therefore, the estimate \eqref{eq:dis_time_bound} gives
	\begin{equation}
		\label{eq:case1_bound}
		\left|
		\frac{1 - e^{\tau \lambda_n}}{z - e^{\tau \lambda_n}}
		\right| \, \left|
		\frac{1}{\lambda_n}
		\right| \leq 
		\frac{|1 - e^{\tau \lambda_n}|}{1 - e^{\tau \re \lambda_n}} \left|
		\frac{1}{\lambda_n}
		\right| \leq \frac{2}{(1 - e^{-1}) \kappa}.
	\end{equation}
	Next we examine the case~(ii) $\tau \re \lambda_n > -1$
	and $\re\lambda_n \leq -\omega$.
	The function
	\[
	g(\lambda) \coloneqq
	\begin{cases}
		\dfrac{1 - e^{\lambda}}{\lambda} & \text{if $\lambda \not=0$} \vspace{5pt}\\
		-1 & \text{if $\lambda = 0$}
	\end{cases}
	\]
	is holomorphic on $\mathbb{C}$. 
	Therefore, 
	there exists $M_1 >0$ such that $|g(\lambda)| \leq M_1$
	for all $\lambda \in \mathbb{C}$ satisfying $-1 \leq \re \lambda \leq 0$
	and $|\im \lambda| \leq \pi$.
	For all 
	$\lambda \in \mathbb{C}$ with  $|\im \lambda | \leq \pi$ and all $\ell \in \mathbb{N}$, 
	we obtain
	\[
	|g(\lambda \pm 2\ell \pi i)| = \left|\frac{1 - e^{\lambda}}{\re \lambda + i(\im \lambda \pm 2\ell \pi )}\right|
	\leq |g(\lambda)|.
	\]
	Hence $|g(\lambda)| \leq M_1$ if $-1 \leq \re \lambda \leq 0$.
	This estimate on $g$ shows that
	\begin{equation}
		\label{eq:dis_bount1}
		\frac{|1 - e^{\tau \lambda_n}|}{\tau |\lambda_n|} \leq M_1.
	\end{equation}
	Moreover, 
	applying 
	the mean value theorem to the function $t \mapsto e^t$ on $[-1,0]$,
	we obtain
	\[
	\frac{1 - e^t}{|t|} \geq e^{-1}
	\]
	for all $t \in [-1,0]$. This and the substitution $t = \tau \re \lambda_n$ imply
	\begin{equation}
		\label{eq:dis_bount2}
		\frac{1 - e^{\tau \re \lambda_n}}{\tau |\re \lambda_n|} \geq e^{-1}.
	\end{equation}
	From
	the estimates \eqref{eq:dis_time_bound},
	\eqref{eq:dis_bount1}, and \eqref{eq:dis_bount2},
	we have
	\begin{equation}
		\label{eq:case2_bound}
		\left|
		\frac{1 - e^{\tau \lambda_n}}{z - e^{\tau \lambda_n}}
		\right| \, \left|
		\frac{1}{\lambda_n}
		\right| \leq \frac{e M_1 }{|\re \lambda_n |} \leq \frac{e M_1 }{\omega}.
	\end{equation}
	
	Finally, we study the case~(iii)
	$\tau \re \lambda_n > -1$
	and $\lambda_n \in \Omega_{\alpha,\Upsilon}$.
	It follow from $\lambda_n \in \Omega_{\alpha,\Upsilon}$ that
	\[
	|\re \lambda_n| \geq  \frac{\Upsilon}{|\im \lambda_n|^\alpha}.
	\]
	Using the fact that $|\lambda_n| \geq \kappa >0$ 
	for all $n \geq N_{\rm b}$, we have that for $\widetilde \alpha \geq \alpha$,
	\begin{align}
		\label{eq:real_part_estimate}
		\frac{1}{|\re \lambda_n|} \leq
		\frac{|\im \lambda_n|^\alpha}{\Upsilon} \cdot 
		\frac{	|\lambda_n|^{\widetilde \alpha}}
		{|\lambda_n|^{\widetilde \alpha}} \leq 
		\frac{|\lambda_n|^{\widetilde \alpha}}
		{\Upsilon |\lambda_n|^{\widetilde \alpha - \alpha}} \leq 
		\frac{	|\lambda_n|^{\widetilde \alpha} }{\Upsilon \kappa^{\widetilde \alpha-\alpha}
		}.
	\end{align}
	The estimates \eqref{eq:dis_bount1} and \eqref{eq:dis_bount2}
	hold also in the case (iii).
	Applying the estimates \eqref{eq:dis_bount1},
	\eqref{eq:dis_bount2}, and \eqref{eq:real_part_estimate} to \eqref{eq:dis_time_bound} yields
	\begin{align}
		\label{eq:case3_bound}
		\left|
		\frac{1 - e^{\tau \lambda_n}}{z - e^{\tau \lambda_n}}
		\right| \,\left|
		\frac{1}{\lambda_n}
		\right| \leq \frac{e M_1}{\Upsilon \kappa^{\widetilde \alpha-\alpha}}	|\lambda_n|^{\widetilde \alpha} .
	\end{align}

	Define 
	the constants $\Upsilon_1,\Upsilon_2 > 0$ by
	\[
	\Upsilon_1 \coloneqq 
	\max\left\{
	\frac{2}{(1 - e^{-1}) \kappa},~\frac{e M_1 }{\omega}
	\right\},\quad 
	\Upsilon_2 \coloneqq 
	\frac{e M_1}{\Upsilon \kappa^{\widetilde \alpha-\alpha}},
	\]
	which are independent of 
	$\tau > 0$ and $z \in \overline{\mathbb{E}_1}$.
	From  the estimates \eqref{eq:case1_bound}, \eqref{eq:case2_bound}, 
	and \eqref{eq:case3_bound},
	we conclude
	that \eqref{eq:case1-3_bound} holds 
	for all $N \geq N_{\rm b}$.
\end{proof}

\begin{lmm}
	\label{lem:discrete_time_trans_func_bound}
	Suppose that {\rm (A\ref{assump:finite_unstable})}  holds.
	Let $b \in \mathcal{D}^{\beta}$ and $f  \in \mathcal{D}_*^{\gamma}$
	for some $\beta,\gamma \geq 0$ satisfying 
	$\beta + \gamma \geq \alpha$.
	If there exists $\varepsilon_{\rm c}\in (0,1)$ such that 
	\begin{equation}
		\label{eq:continuous_lower_bound}
		|1 -  FR(\lambda, A)B| > \varepsilon_{\rm c} 
	\end{equation}
	for all $\lambda \in \rho(A) \cap \overline{\mathbb{C}_0}$,
	then, for any $\varepsilon_{\rm d}  \in (0,\varepsilon_{\rm c})$,
	there exists $\tau^*>0$ such that
	\begin{equation}
		\label{eq:discrete_lower_bound}
		\big|1 - FR\big(z, T(\tau)\big) S(\tau)\big| > \varepsilon_{\rm d}
	\end{equation}
	for all $\tau \in (0,\tau^*)$ and $z \in \rho(T(\tau)) \cap \overline{\mathbb{E}_1}$.
\end{lmm}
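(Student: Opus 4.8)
\emph{Proof proposal.} The starting point is to make both ``transfer functions'' explicit in the Riesz-spectral basis. Put $a_n:=\langle b,\psi_n\rangle\langle\phi_n,f\rangle$. Since $T(\tau)\phi_n=e^{\tau\lambda_n}\phi_n$, $S(\tau)1=\sum_n\tfrac{e^{\tau\lambda_n}-1}{\lambda_n}\langle b,\psi_n\rangle\phi_n$ (here $\lambda_n\neq0$ by {\rm(A\ref{assump:imaginary})}), and both $R(z,T(\tau))$ and $R(\lambda,A)$ act diagonally on the Riesz basis, one obtains
\[
FR\big(z,T(\tau)\big)S(\tau)=\sum_{n=1}^{\infty}\frac{\big(e^{\tau\lambda_n}-1\big)\,a_n}{\big(z-e^{\tau\lambda_n}\big)\lambda_n}\quad\big(z\in\rho(T(\tau))\big),\qquad FR(\lambda,A)B=\sum_{n=1}^{\infty}\frac{a_n}{\lambda-\lambda_n}\quad\big(\lambda\in\rho(A)\big).
\]
The plan is to compare these two series under the change of variable $z\mapsto\mu:=\tfrac1\tau\operatorname{Log}z$, with the principal branch of the logarithm. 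For $z\in\overline{\mathbb{E}_1}$ we have $\re\mu=\tfrac1\tau\log|z|\ge0$; and if in addition $z\in\rho(T(\tau))$, then $z\neq e^{\tau\lambda_n}$ for all $n$, hence $\mu\neq\lambda_n$ for all $n$, and since by {\rm(A\ref{assump:finite_unstable})}--{\rm(A\ref{assump:imaginary})} every accumulation point of $(\lambda_n)$ has negative real part while only finitely many $\lambda_n$ lie in $\overline{\mathbb{C}_0}$, we get $\mu\in\rho(A)\cap\overline{\mathbb{C}_0}$. Thus \eqref{eq:continuous_lower_bound} applies and gives $|1-FR(\mu,A)B|>\varepsilon_{\rm c}$, and it remains to show that for small $\tau$ the discrete quantity $FR(z,T(\tau))S(\tau)$ tracks $FR(\mu,A)B$ closely enough to keep the value bounded away from $1$ by $\varepsilon_{\rm d}$.

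I would argue by contradiction. If the conclusion fails, there are $\tau_k\downarrow0$ and $z_k\in\rho(T(\tau_k))\cap\overline{\mathbb{E}_1}$ with $|1-FR(z_k,T(\tau_k))S(\tau_k)|\le\varepsilon_{\rm d}$. First, $(z_k)$ is bounded: $\|S(\tau)\|\le C\tau$ for $\tau\le1$ and $\sup_{\tau\le1}\|T(\tau)\|<\infty$, so $|FR(z,T(\tau))S(\tau)|\le\|F\|\,\|S(\tau)\|/(|z|-\|T(\tau)\|)\to0$ as $|z|\to\infty$, uniformly for $\tau\le1$, which rules out $|z_k|\to\infty$ since $\varepsilon_{\rm d}<1$. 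Passing to a subsequence we may assume $z_k\to z_\infty\in\overline{\mathbb{E}_1}$; set $\mu_k:=\tfrac1{\tau_k}\operatorname{Log}z_k\in\rho(A)\cap\overline{\mathbb{C}_0}$. Uniform control of the tail $n\ge N_{\rm b}$ comes from Lemma~\ref{lem:frac_lam_bound} (with $\widetilde\alpha=\alpha$): it bounds $\big|\tfrac{1-e^{\tau\lambda_n}}{z-e^{\tau\lambda_n}}\big|\tfrac1{|\lambda_n|}$ by $\max\{\Upsilon_1,\Upsilon_2|\lambda_n|^{\alpha}\}$ for all $\tau>0$, $z\in\overline{\mathbb{E}_1}$, $n\ge N_{\rm b}$; likewise $|\re\lambda_n|^{-1}\lesssim\max\{1,|\lambda_n|^{\alpha}\}$ (since $\re\lambda_n\le-\omega$ or $\lambda_n\in\Omega_{\alpha,\Upsilon}$), and $|\mu-\lambda_n|^{-1}\le|\re\lambda_n|^{-1}$ enjoys the same bound. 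Because $\beta+\gamma\ge\alpha$ and $\inf_{n\ge N_{\rm b}}|\lambda_n|>0$, the Cauchy--Schwarz inequality together with $b\in\mathcal{D}^{\beta}$, $f\in\mathcal{D}_*^{\gamma}$ gives $\sum_{n\ge N_{\rm b}}\max\{1,|\lambda_n|^{\alpha}\}\,|a_n|<\infty$. Hence the tails of both series above are dominated by one fixed convergent series, uniformly in $\tau_k$, $z_k$ and $\mu_k$, so dominated convergence applies to them term by term.

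It then remains to take limits in the finitely many terms $n<N_{\rm b}$ and in the dominated tails. Distinguishing the cases $z_\infty\neq1$, and $z_\infty=1$ with $(\mu_k)$ unbounded or bounded — in the last case $\tau_k\mu_k=\operatorname{Log}z_k\to0$, so a Taylor estimate applied to $\tfrac{e^{\tau_k\lambda_n}-1}{(z_k-e^{\tau_k\lambda_n})\lambda_n}=\tfrac{e^{\tau_k\lambda_n}-1}{e^{\tau_k\lambda_n}(e^{\tau_k(\mu_k-\lambda_n)}-1)\lambda_n}$ yields, for each fixed $n$ with $\lambda_n\neq\mu_\infty:=\lim\mu_k$, the limit $\tfrac1{\mu_\infty-\lambda_n}$ — one checks that in every case either $FR(z_k,T(\tau_k))S(\tau_k)$ and $FR(\mu_k,A)B$ both tend to $0$, or both tend to $FR(\mu_\infty,A)B$, or both diverge to $\infty$ (the last only when $\mu_\infty$ equals one of the finitely many eigenvalues in $\mathbb{C}_0$ at which the coefficient $a_n$ does not vanish, where that single term blows up like $a_n/(\mu_k-\mu_\infty)$ while the rest stay bounded). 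In every case $|1-FR(z_k,T(\tau_k))S(\tau_k)|$ has the same limit as $|1-FR(\mu_k,A)B|$, namely $+\infty$ or a finite number $\ge\varepsilon_{\rm c}$, contradicting $|1-FR(z_k,T(\tau_k))S(\tau_k)|\le\varepsilon_{\rm d}<\varepsilon_{\rm c}$.

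I expect the main obstacle to be exactly this uniformity: unlike in the exponentially stable case, $z$ ranges over the entire closed exterior $\overline{\mathbb{E}_1}$ — equivalently $\mu$ over the unbounded half-plane $\overline{\mathbb{C}_0}$ — so compactness is not available, and two regimes must be handled simultaneously. In the tail $n\ge N_{\rm b}$ there is no uniform spectral gap ($|\re\lambda_n|$ may tend to $0$), and this loss is compensated only by the estimate $|\re\lambda_n|^{-1}\lesssim|\lambda_n|^{\alpha}$ of Lemma~\ref{lem:frac_lam_bound} absorbed through the hypothesis $\beta+\gamma\ge\alpha$. Near the finitely many discrete eigenvalues $e^{\tau\lambda_n}$ with $\re\lambda_n\ge0$, which all collapse onto $z=1$ as $\tau\downarrow0$, the discrete transfer function genuinely blows up, and one must verify that the continuous-time transfer function blows up in exact step with it so that the lower bound $\varepsilon_{\rm d}$ is not lost.
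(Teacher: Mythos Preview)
Your approach is sound and takes a genuinely different route from the paper. The paper proceeds constructively by an $\varepsilon$--$N$ truncation: it shows directly that the tails $\sum_{n\ge N}$ of both the continuous and discrete transfer functions can be made uniformly small (Steps~1--2, using exactly the ingredient you identify, Lemma~\ref{lem:frac_lam_bound} together with Cauchy--Schwarz and $\beta+\gamma\ge\alpha$), and then for the remaining finite-dimensional truncation it imports the region decomposition $\Omega_0,\dots,\Omega_5$ of Rebarber--Townley~\cite{Rebarber2006}, which separates a ``near'' zone $\Omega_0=\{z=e^{\tau\lambda}:\re\lambda\ge0,\ |\tau\lambda|<\eta\}$ where $z\mapsto\lambda$ is a good change of variable from the ``far'' zone $\Omega_3=\overline{\mathbb{E}_1}\setminus\Omega_0$, and inside $\Omega_0$ further separates points close to the finitely many unstable poles from points far from them. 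Your contradiction argument via sequential compactness and dominated convergence is a legitimate alternative: it avoids the explicit region decomposition, but the trichotomy you end up with ($z_\infty\neq1$; $z_\infty=1$ with $|\mu_k|\to\infty$; $z_\infty=1$ with $\mu_k\to\mu_\infty$, the last possibly an unstable eigenvalue) is the sequential shadow of exactly those regions. What the paper's approach buys is a quantitative, in principle explicit, estimate for $\tau^*$ and a clean citation for the finite part; what yours buys is self-containment, since you re-derive the behaviour near the unstable poles by a direct Taylor comparison instead of quoting~\cite{Rebarber2006}.

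Two small points to tighten. First, you invoke (A\ref{assump:imaginary}) to assert $\lambda_n\neq0$, but the lemma assumes only (A\ref{assump:finite_unstable}); as the paper notes in its Step~3, one must allow $\lambda_n=0$ for some of the finitely many indices $n<N_{\rm b}$ and replace the corresponding discrete term by $\tau a_n/(z-1)$. Second, in the subcase $\mu_\infty=\lambda_m$ with $a_m=0$, the limit of both series is the removable-singularity value $\sum_{n\neq m}a_n/(\mu_\infty-\lambda_n)$ rather than $FR(\mu_\infty,A)B$ (which is undefined since $\mu_\infty\in\sigma(A)$); the lower bound $\ge\varepsilon_{\rm c}$ then follows by taking the limit in~\eqref{eq:continuous_lower_bound} along $\mu_k\in\rho(A)\cap\overline{\mathbb{C}_0}$.
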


The proof of 
Lemma~\ref{lem:discrete_time_trans_func_bound}
is based on the approximation
approach developed in the proof of 
Theorem~2.1 of \cite{Rebarber2006} for
the preservation of exponential stability 
under sampling.
We decompose
the transfer functions ${\mathbf G}(\lambda) \coloneqq F(\lambda I - A)^{-1}B$ and
${\mathbf H}_{\tau}(z) \coloneqq F(zI - T(\tau))^{-1}S(\tau)$ into
finite-dimensional truncations and 
infinite-dimensional tails with approximation order $N \in \mathbb{N}$:
\begin{align*}
	{\mathbf G}(\lambda) &= 
	\sum_{n = 1}^{N-1} \frac{\langle b , \psi_n \rangle \langle \phi_n, f\rangle}{\lambda - \lambda_n}  + 
	\sum_{n = N}^\infty \frac{\langle b , \psi_n \rangle \langle \phi_n, f\rangle}{\lambda - \lambda_n},\quad  \lambda \in \rho(A)\\
	{\mathbf H}_{\tau}(z) &=\sum_{n = 1}^{N-1}
	\frac{e^{\tau \lambda_n}-1}{z - e^{\tau \lambda_n}} \cdot 
	\frac{\langle b , \psi_n \rangle 
		\langle\phi_n,f \rangle}{\lambda_n} + 
	\sum_{n = N}^{\infty}
	\frac{e^{\tau \lambda_n}-1}{z - e^{\tau \lambda_n}} \cdot 
	\frac{\langle b , \psi_n \rangle 
		\langle\phi_n,f \rangle}{\lambda_n},\quad  z \in \rho\big(T(\tau)\big),
\end{align*}
where,  for simplicity of notation, 
we assume 
that $0 \not\in \{\lambda_n:n \in \mathbb{N}\}$.
The main idea of 
the approximation
approach in \cite{Rebarber2006} is twofold.
First, we prove that 
the infinite-dimensional tails become arbitrarily small
as $N$ increases. Next, we show that
if $\tau>0$ is sufficiently small, then
the finite-dimensional truncations with a fixed $N \in \mathbb{N}$
are close (except near the unstable poles) under the  relationship
$z = e^{\tau \lambda}$ of the variable $\lambda$ in the continuous-time
setting and 
the variable $z$ in the discrete-time setting.
For the infinite-dimensional tails, a treatment different from the previous studies \cite{Rebarber2006,Wakaiki2021SIAM} is
required due to the geometric property of the eigenvalues of 
the generator $A$ and the conditions on 
the control operator $B$
and the feedback operator $F$. On the other hand,
the analysis of 
the finite-dimensional truncations has no difficulty arising
from polynomial stability. Hence, to the finite-dimensional truncations, 
one can apply 
the arguments  developed in the proof of 
Theorem~2.1 of \cite{Rebarber2006} with only minor modifications; see also 
the proof of Lemma~3.8 of \cite{Wakaiki2021SIAM}.

\begin{proof}[Proof of Lemma~\Rref{lem:discrete_time_trans_func_bound}]
	{\em Step 1:}
	Let $N_{\rm b} \in \mathbb{N}$ be such that 
	\eqref{eq:lambda_cond_large} holds.
	We show that 
	for all $\varepsilon >0$, there exists 
	$N_0^{\rm c} \geq N_{\rm b}$ such that 
	\begin{equation}
		\label{eq:continuous_large_case}
		\sup_{\lambda \in \overline{\mathbb{C}_0} }
		\left|
		\sum_{n = N}^\infty \frac{\langle b , \psi_n \rangle \langle \phi_n, f\rangle}{\lambda - \lambda_n} 
		\right| \leq \varepsilon
	\end{equation}
	for all $N\geq N_0^{\rm c}$.

	As in the spectral decomposition described in 
	Section~\ref{sec:Spec_decomp},
	there exists  a smooth, positively oriented, and simple closed 
	curve $\Phi_{\rm b}$ in 
	$\rho(A)$ containing $\{\lambda_n:1\leq n \leq N_{\rm b} - 1 \} $ in its interior and 
	$\sigma(A) \setminus \{\lambda_n:1\leq n \leq N_{\rm b} - 1 \} $ in its exterior.
	Define the projection $\Pi_{\rm b}$ on $X$ by
	\[
	\Pi_{\rm b} \coloneqq \frac{1}{2\pi i} \int_{\Phi_{\rm b}} (\lambda I - A)^{-1} d\lambda,
	\]
	and put $X_{\rm b}^- \coloneqq (I - \Pi_{\rm b}) X$.
	For $t \geq 0$, define  
	$
	T_{\rm b}^-(t) \coloneqq T(t)|_{X_{\rm b}^-}.
	$
	As in  Lemma~\ref{lem:T_minus_poly_stable}, 
	$(T_{\rm b}^-(t))_{t\geq 0}$ 
	is a polynomially stable $C_0$-semigroup with parameter $\alpha$ 
	on $X_{\rm b}^-$.
	We denote by $A^-_{\rm b}$ the generator of $(T_{\rm b}^-(t))_{t\geq 0}$.
	
	Theorem~\ref{thm:decay_charac}
	implies that
	\[
	M\coloneqq 
	\sup_{\lambda \in \overline{\mathbb{C}_0}}
	\|R(\lambda , A^-_{\rm b}) (-A^-_{\rm b})^{-\alpha}\| <\infty.
	\]
	For all  $n \geq N_{\rm b}$ and $\lambda \in \overline{\mathbb{C}_0}$,
	\begin{align*}
		\frac{M_{\rm a} }{|\lambda-\lambda_n|^2\,  |\lambda_n|^{2\alpha}}
		&\leq \|R(\lambda,A^-_{\rm b})(-A^-_{\rm b})^{-\alpha} \phi_n \|^2\\
		&\leq \|R(\lambda,A^-_{\rm b})(-A^-_{\rm b})^{-\alpha}\|^2 \, \| \phi_n \|^2 \\
		&\leq M^2M_{\rm b}.
	\end{align*}
	Therefore, 
	\[
	\sup_{\lambda \in \overline{\mathbb{C}_0}}
	\frac{1}{|\lambda-\lambda_n|\,  |\lambda_n|^{\alpha}} \leq 
	M\sqrt{\frac{M_{\rm b} }{M_{\rm a}}}
	\]
	for all $n \geq N_{\rm b}$.
	By \eqref{eq:lambda_cond_large}, there exists a constant $\kappa >0$ such that  $|\lambda_n| \geq \kappa$ 
	for all $n \geq N_{\rm b}$.
	The Cauchy-Schwartz inequality implies that 
	for all $N \geq N_{\rm b}$ and $\lambda \in \overline{\mathbb{C}_0}$,
	\begin{align*}
		\left|
		\sum_{n= N}^\infty
		\frac{\langle b, \psi_n\rangle  \langle \phi_n, f \rangle }
		{\lambda - \lambda_n}
		\right| &\leq
		\sum_{n= N}^\infty
		\frac{1}{|\lambda-\lambda_n|\,  |\lambda_n|^{\alpha}} \cdot 
		\frac{|\lambda_n|^{\beta+\gamma}  \, 
			|\langle b, \psi_n\rangle \langle \phi_n,f \rangle |	
		}{|\lambda_n|^{\beta+\gamma-\alpha} } \\
		&\leq 
		\frac{M}{\kappa^{\beta+\gamma-\alpha}}  \sqrt{\frac{M_{\rm b} }{M_{\rm a}}
			\left(\sum_{n= N}^\infty |\lambda_n|^{2\beta} \, |\langle b, \psi_n\rangle|^2\right)
			\left(\sum_{n= N}^\infty |\lambda_n|^{2\gamma} \,  |\langle \phi_n, f \rangle|^2\right)}.
	\end{align*}
	Since $b \in \mathcal{D}^{\beta}$ and $f  \in \mathcal{D}_*^{\gamma}$,
	we obtain
	\begin{equation}
		\label{eq:b_bounded}
		\sum_{n= 1}^\infty |\lambda_n|^{2\beta}\,  |\langle b, \psi_n\rangle|^2 < \infty,\qquad
		\sum_{n= 1}^\infty |\lambda_n|^{2\gamma} \, |\langle \phi_n, f \rangle|^2 < \infty.
	\end{equation}
	Hence, 	for all $\varepsilon >0$, there exists 
	$N_0^{\rm c} \geq N_{\rm b}$ such that \eqref{eq:continuous_large_case} holds.
	
	{\em Step 2:}	
	We shall  show that 
	for all $\varepsilon >0$, there exists $N_0^{\rm d} \geq N_{\rm b}$
	such that 
	\begin{equation}
		\label{eq:discrete_large_case}
		\sup_{z \in  \overline{\mathbb{E}_1}} 
		\left|
		\sum_{n = N}^\infty 
		\frac{1 - e^{\tau \lambda_n}}{z - e^{\tau \lambda_n}} \cdot 
		\frac{\langle b , \psi_n \rangle 
			\langle\phi_n,f \rangle}{\lambda_n}
		\right| \leq  \varepsilon
	\end{equation}
	for all $\tau >0$ and $N \geq N_0^{\rm d}$.
	Note that $N_0^{\rm d}$ is independent of $\tau$.
	
	By Lemma~\ref{lem:frac_lam_bound}
	with $\widetilde \alpha \coloneqq \beta+\gamma$, 
	there are constants $\Upsilon_1,\Upsilon_2 >0$
	such that 
	\[
	\left|
	\frac{1 - e^{\tau \lambda_n}}{z - e^{\tau \lambda_n}} \cdot 
	\frac{\langle b , \psi_n \rangle 
		\langle\phi_n,f \rangle}{\lambda_n}
	\right| \leq \left(\Upsilon_1 + 
	\Upsilon_2 |\lambda_n|^{\beta+\gamma}
	\right) |\langle b , \psi_n \rangle| \,
	|\langle\phi_n,f \rangle|.
	\]
	for all $\tau > 0$, $z \in \overline{\mathbb{E}_1}$, and $n \geq N_{\rm b}$.
	Using the Cauchy-Schwartz inequality, we obtain
	\begin{align}
		\sum_{n = N}^\infty 
		\left|
		\frac{1 - e^{\tau \lambda_n}}{z - e^{\tau \lambda_n}} \cdot 
		\frac{\langle b , \psi_n \rangle 
			\langle\phi_n,f \rangle}{\lambda_n}
		\right| &\leq 
		\Upsilon_1 \sum_{n = N}^\infty   |\langle b , \psi_n \rangle  | \,
		|\langle\phi_n,f \rangle | + 
		\Upsilon_2 \sum_{n = N}^\infty 	|\lambda_n|^{\beta+\gamma } \, |
		\langle b , \psi_n \rangle  | \,
		|\langle\phi_n,f \rangle | \notag \\
		&\leq 
		\Upsilon_1 \sqrt{\left(
			\sum_{n = N}^\infty   |\langle b , \psi_n \rangle  |^2\right) \left( \sum_{n = N}^\infty 
			|\langle\phi_n,f \rangle |^2\right)} \notag \\&\quad + \Upsilon_2
		\sqrt{
			\left(\sum_{n= N}^\infty |\lambda_n|^{2\beta} \, |\langle b, \psi_n\rangle|^2\right)
			\left(\sum_{n= N}^\infty |\lambda_n|^{2\gamma} \,  |\langle \phi_n, f \rangle|^2\right)} \label{eq:discrete_time_transfer_suff_large}
	\end{align}
	for all $N \geq N_{\rm b}$.
	As in Step 1,
	it follows 
	from \eqref{eq:b_bounded} and 
	\[
	\sum_{n= 1}^\infty  |\langle b, \psi_n\rangle|^2 < \infty,\quad
	\sum_{n= 1}^\infty  |\langle \phi_n, f \rangle|^2 < \infty
	\]
	that for all $\varepsilon >0$,
	there exists $N_0^{\rm d}  \geq N_{\rm b}$ such that 
	\eqref{eq:discrete_large_case} holds.
	
	{\em Step 3:}	
	Let 
	$\varepsilon_{\rm c} \in (0,1)$ satisfy \eqref{eq:continuous_lower_bound}, and
	choose $\varepsilon \in (0,\varepsilon_{\rm c}/3)$ arbitrarily.
	We have shown in Steps~1 and 2 that
	there exists $N_0 \geq N_{\rm b}$ such that 
	for all $N \geq N_0$ and $\tau >0$,
	\begin{subequations}
		\begin{align}
			\sup_{\lambda \in \overline{\mathbb{C}_0} } 
			&\left|
			\sum_{n=N}^{\infty} \frac{\langle b , \psi_n \rangle \langle \phi_n, f \rangle}{\lambda - \lambda_n} 
			\right| \leq \varepsilon \quad \text{and} \label{eq:suff_large_cont}\\
			\sup_{z \in  \overline{\mathbb{E}_1}} 
			&\left|
			\sum^{\infty}_{n=N}\frac{1-e^{\tau \lambda_n}}{z - e^{\tau \lambda_n}}
			\cdot  \frac{\langle b , \psi_n \rangle \langle \phi_n, f \rangle}{\lambda_n}
			\right| \leq \varepsilon. \label{eq:suff_large_dist}
		\end{align}
	\end{subequations}

	Let $N \geq N_0$.
	For simplicity of notation, we assume that 
	$\lambda_n$ is non-zero
	for all $1 \leq n \leq N-1$. 
	When $\lambda_n = 0$ for some $1 \leq n \leq N-1$,
	the corresponding term,
	\[
	\frac{e^{\tau \lambda_n}-1}{z - e^{\tau \lambda_n}} \cdot 
	\frac{\langle b , \psi_n \rangle 
		\langle\phi_n,f \rangle}{\lambda_n},
	\]
	is just replaced
	by
	\[
	\frac{\tau \langle b , \psi_n \rangle 
		\langle\phi_n,f \rangle}{z-1}
	\]
	as in \eqref{eq:RS_series_zero_case}.
	We  investigate the finite-dimensional truncation
	\[
	\sum_{n = 1}^{N-1}
	\frac{e^{\tau \lambda_n}-1}{z - e^{\tau \lambda_n}} \cdot 
	\frac{\langle b , \psi_n \rangle 
		\langle\phi_n,f \rangle}{\lambda_n}.
	\]
	This finite sum
	has no difficulty arising from polynomial stability, and hence
	we can apply the result on exponential stability developed in
	\cite{Rebarber2006}, which is outlined for  completeness.

	For $\tau, \eta, a>0$, define   
	the sets $\Omega_0$, $\Omega_1$, $\Omega_2$, and
	$\Omega_3$ by
	\begin{align*}
		\Omega_0 &\coloneqq 
		\{z = e^{\tau \lambda }: \re \lambda \geq 0,~|\tau \lambda | < \eta\} 
		=
		\{z = e^{\mu }: \re \mu \geq 0,~|\mu| < \eta\} \\
		\Omega_1 &\coloneqq 
		\{z = e^{\tau \lambda }: |\lambda - \lambda_n| \geq a ~\text{for all 
			$ 1\leq n \leq N-1$}\} \\
		&\qquad \cup
		\{z = e^{\tau \lambda }: 0< |\lambda - \lambda_n| < a\text{~and~}
		\langle b , \psi_n \rangle \langle \phi_n, f \rangle = 0
		~\text{for some $ 1\leq n \leq N-1$} \} \\
		\Omega_2 &\coloneqq \{z = e^{\tau \lambda }: 0< |\lambda - \lambda_n| < a
		\text{~and~}
		\langle b , \psi_n \rangle \langle \phi_n, f \rangle \not= 0
		~\text{for some $ 1\leq n \leq N-1$} \} \\
		\Omega_3 &\coloneqq \overline{\mathbb{E}_1} \setminus 
		\Omega_0.
	\end{align*}
	Take $0< \eta < \pi$. Then, for each $z \in \Omega_0$,
	there uniquely exists $\lambda \in\overline{\mathbb{C}_0}$ such that  
	$z = e^{\tau \lambda}$  and $|\tau \lambda| < \eta$.
	This $\lambda$ is the complex variable in the continuous-time setting
	corresponding to the complex variable $z$ in the discrete-time setting.
	Put
	$
	a^* \coloneqq \min \{|\lambda_n - \lambda_m|/2: 1\leq n,m \leq N-1\}.
	$
	Then there is no $\lambda \in \mathbb{C}$ such that one has
	both
	$|\lambda - \lambda_n| < a^*$ and 
	$|\lambda - \lambda_m| < a^*$ 
	for some $1\leq n ,m \leq N-1$ with $n \not=m$.
	By Steps 3) and 4) of the proof of Theorem~2.1 in \cite{Rebarber2006},
	there exist
	$\tau^* >0$, $\eta \in (0,\pi)$, and 
	$a \in (0,a^*)$ 
	such that
	the following three statements hold
	for all $\tau \in (0,\tau^*)$:
	\begin{enumerate}
		\item \label{it:Omega_3} 
		For 
		all $1 \leq n \leq N-1$,
		one has $e^{\tau \lambda_n} \in \mathbb{C} \setminus \Omega_3$.
		\item \label{it:Omega_4} 
		For all $z \in \Omega_0 \cap \Omega_1 \eqqcolon \Omega_4$ and
		the corresponding $\lambda \in\overline{\mathbb{C}_0}$ satisfying
		$z = e^{\tau \lambda}$  and $|\tau \lambda| < \eta$,
		\begin{equation}
			\label{eq:finite_case1}
			\left|
			\sum_{n=1}^{N-1} \frac{\langle b , \psi_n \rangle \langle \phi_n, f \rangle}{\lambda - \lambda_n}  +
			\sum^{N-1}_{n=1}\frac{1-e^{\tau \lambda_n}}{z - e^{\tau \lambda_n}}
			\cdot  \frac{\langle b , \psi_n \rangle \langle \phi_n, f \rangle}{\lambda_n} 
			\right| < \varepsilon.
		\end{equation}
		\item \label{it:Omega_5} 
		For all  $z \in (\Omega_0 \cap \Omega_2) \cup
		\Omega_3 \eqqcolon \Omega_5$,
		\begin{equation}
			\label{eq:finite_case2}
			\left|
			1 + \sum^{N-1}_{n=1}\frac{1-e^{\tau \lambda_n}}{z - e^{\tau \lambda_n}}
			\cdot  \frac{\langle b , \psi_n \rangle \langle \phi_n, f \rangle}{\lambda_n} 
			\right|> \varepsilon_{\rm c}.
		\end{equation}
	\end{enumerate}
	In what follows, $\tau,\eta, a>0$ are chosen so that
	the above statements \ref{it:Omega_3}--\ref{it:Omega_5} hold.
	
	Suppose that $z \in \rho(
	T(\tau) ) \cap \Omega_4$, and let
	$\lambda \in \overline{\mathbb{C}_0}$ satisfy
	$z = e^{\tau \lambda}$  and $|\tau \lambda| < \eta$.
	Then $\lambda \in \rho(A)$.
	Combining the estimates
	\eqref{eq:suff_large_cont}, \eqref{eq:suff_large_dist}, and
	\eqref{eq:finite_case1} with the assumption~\eqref{eq:continuous_lower_bound}, i.e.,
	\[
	|1 -  F(\lambda I - A)^{-1}B | =
	\left|
	1 - \sum_{n=1}^{\infty} \frac{\langle b , \psi_n \rangle \langle \phi_n, f \rangle}{\lambda - \lambda_n} 
	\right| > \varepsilon_{\rm c},
	\]
	we obtain
	\begin{align*}
		\left| 
		1 + \sum_{n=1}^\infty \frac{1-e^{\tau \lambda_n}}{z - e^{\tau \lambda_n}}
		\cdot  \frac{\langle b , \psi_n \rangle \langle \phi_n, f \rangle}{\lambda_n} 
		\right| &> 
		\varepsilon_{\rm c} - 3\varepsilon.
	\end{align*}
	On the other hand, if $z \in \rho(
	T(\tau) ) \cap\Omega_5$, then \eqref{eq:suff_large_dist}
	and 
	\eqref{eq:finite_case2} yield
	\begin{align*}
		\left| 
		1 + \sum_{n=1}^\infty \frac{1-e^{\tau \lambda_n}}{z - e^{\tau \lambda_n}}
		\cdot  \frac{\langle b , \psi_n \rangle \langle \phi_n, f \rangle}{\lambda_n} 
		\right| 
		&> \varepsilon_{\rm c}-\varepsilon.
	\end{align*}
	
	{\em Step 4:}
	It remains to show that
	\begin{equation}
		\label{eq:Omega45}
		\big(
		\rho\big(
		T(\tau) \big) \cap\Omega_4\big) \cup \big(\rho\big(
		T(\tau) \big) \cap\Omega_5 \big)
		= 
		\rho\big (T(\tau)\big) \cap \overline{\mathbb{E}_1}.
	\end{equation}	
	By definition,
	\[
	(\Omega_0 \cap \Omega_1)  \cup 
	(\Omega_0 \cap \Omega_2) = 
	\Omega_0 \cap (\Omega_1 \cup \Omega_2) = 
	\Omega_0 \setminus \{e^{\tau \lambda_n}:  1\leq n \leq N-1  \}.
	\]
	Moreover, the statement \ref{it:Omega_3} above shows that 
	\[
	\Omega_3 \cap 
	\{
	e^{\tau \lambda_n} : 1 \leq n \leq N-1
	\} = \emptyset.
	\]
	Hence
	\begin{align*}
		\Omega_4 \cup \Omega_5 
		&=
		(\Omega_0 \setminus \{e^{\tau \lambda_n}:  1\leq n \leq N - 1  \} )
		\cup \Omega_3 \\
		&= (
		\Omega_0 \cup \Omega_3
		) 
		\setminus 
		\{e^{\tau\lambda_n} : 1 \leq n \leq N-1 \} 
		\\
		&=
		\overline{\mathbb{E}_1}  \setminus 
		\{e^{\tau\lambda_n} : 1 \leq n \leq N-1  \}.
	\end{align*}
	This yields
	\begin{align*}
		\big(
		\rho\big(
		T(\tau) \big) \cap\Omega_4\big) \cup \big(\rho\big(
		T(\tau) \big) \cap\Omega_5 \big) &=
		\rho\big(
		T(\tau) \big) \cap (\Omega_4 \cup \Omega_5) \\
		&=
		\rho\big(
		T(\tau) \big) \cap \big(\overline{\mathbb{E}_1}  \setminus 
		\{e^{\tau\lambda_n} : 1 \leq n \leq N-1 \}\big).
	\end{align*}
	Since
	$\sigma(T(\tau)) = \overline{\{e^{\tau \lambda_n}:
		n \in \mathbb{N} \}}$,
	we have that
	\begin{align*}
		\rho\big(
		T(\tau) \big) \cap \big(\overline{\mathbb{E}_1}  \setminus 
		\{e^{\tau\lambda_n} : 1 \leq n \leq N-1 \}\big) 
		&=	\rho\big(
		T(\tau) \big) \cap \overline{\mathbb{E}_1}.
	\end{align*}
	Thus, \eqref{eq:Omega45} holds. 
\end{proof}

The following result can be obtained by a slight modification of
the proof of Lemma~4.6 in \cite{Wakaiki2021SIAM}.
\begin{lmm}
	\label{lem:circle_resol}
	Let $A$ be a Riesz-spectral operator on a Hilbert space $X$
	with simple eigenvalues 
	$(\lambda_n)_{n \in \mathbb{N}}$.
	Let $B \in \mathcal{L}(\mathbb{C},X)$ and 
	$F \in \mathcal{L}(X,\mathbb{C})$ be such that 
	$A+BF$ generates a uniformly bounded $C_0$-semigroup on $X$.
	Suppose that only finite elements of $(\lambda_n)_{n \in \mathbb{N}}$
	are contained in $\mathbb{C}_0$.
	If $\tau >0$ satisfies
	\begin{enumerate}
		\item
		$\tau (\lambda_n - \lambda_m) \not= 2\ell \pi i$
		for all $\ell \in \mathbb{Z} \setminus \{0\}$ 
		and $n,m \in \mathbb{N}$ 
		with $\lambda_n,\lambda_m \in \mathbb{C}_{0}$; and
		\item
		$
		F R (z, T(\tau))S(\tau) \not=1$
		for all $z \in \rho (T(\tau) ) \cap\mathbb{E}_1$,
	\end{enumerate}
	then  $\mathbb{E}_1 \subset \rho(\Delta(\tau))$.
\end{lmm}

The desired inclusion  $\mathbb{E}_1 \subset \rho (\Delta(\tau))$ 
follows from Lemmas~\ref{lem:cont_time_trans_func_bound}, \ref{lem:discrete_time_trans_func_bound}, and \ref{lem:circle_resol}.
\begin{proof}[Proof of Theorem~\Rref{lem:resol_T_SF}:]
	Lemmas~\ref{lem:cont_time_trans_func_bound} and \ref{lem:discrete_time_trans_func_bound},
	together with (A\ref{assump:finite_unstable}), show that
	there exist $\varepsilon >0$ and $\tau^* >0$ such that 
	for all $\tau \in (0,\tau^*)$,
	\begin{enumerate}
		\item
		$\tau (\lambda_n - \lambda_m) \not= 2\ell \pi i$
		for all $\ell \in \mathbb{Z} \setminus \{0\}$ and 
		$n,m \in \mathbb{N}$
		with $1 \leq n,m \leq N_{\rm a} - 1$; and
		\item
		$|1 - F R(z, T(\tau))S(\tau)| 
		>\varepsilon$ for all 
		$z \in \rho (T(\tau)) \cap \overline{\mathbb{E}_1}$.
	\end{enumerate}
	Hence we obtain $\mathbb{E}_1 \subset \rho(\Delta(\tau))$
	for all $\tau \in (0,\tau^*)$
	by  Lemma~\ref{lem:circle_resol}.
\end{proof}

\section{Application of resolvent conditions to discretized system}
\label{sec:resolvent_discretized_sys}
In this section, we complete the proof of the main result, Theorem~\ref{thm:SD_SS}.
To do so, we prove that 
for a sufficiently small sampling period $\tau>0$,
the operator $\Delta(\tau) = T(\tau) + S(\tau)F$ satisfies
the integral conditions on resolvents given in
Theorem~\ref{thm:strong_stability_resol} and 
Proposition~\ref{prop:discrete_poly_decay}.
We divide the resolvent $R(z,\Delta(\tau))$
into two terms, by applying
the well-known Sherman-Morrison-Woodbury formula
presented in the next lemma. 
This formula can be obtained from a straightforward calculation.
\begin{lmm}
	\label{lem:SMW}
	Let $X$ and $U$ be Banach spaces and let 
	$A\colon D(A)\subset X \to X$ be a closed linear operator. Take
	$B \in \mathcal{L}(U,X)$, $F \in \mathcal{L}(X,U)$, and $\lambda \in \rho(A)$.
	If $1 \in \rho (FR(\lambda,A)B) $, then $\lambda \in \rho (A+BF)$ and 
	\[
	R(\lambda,A+BF) = R(\lambda,A) + R(\lambda,A) B 
	(I-FR(\lambda,A)B)^{-1} FR(\lambda,A).
	\]
\end{lmm}

Suppose that Assumption~\ref{assum:for_MR} hold.
By 
Lemmas~\ref{lem:cont_time_trans_func_bound} and \ref{lem:discrete_time_trans_func_bound},
if the sampling period $\tau>0$ is sufficiently small, then 
for all $z \in \rho (T(\tau)) \cap \overline{\mathbb{E}_1}$,
one has
$1 \in \rho (FR(z,T(\tau))S(\tau)) $.
Hence the Sherman-Morrison-Woodbury formula presented in 	Lemma~\ref{lem:SMW} yields
\[
R(z, T(\tau)+S(\tau)F)  =
R\big(z, T(\tau)\big) +
\frac{R\big(z, T(\tau)\big) S(\tau)FR\big(z, T(\tau)\big) }
{1 - FR\big(z, T(\tau)\big) S(\tau) }.
\]
In what follows, we separately investigate 
the integrals of
$
\|R(z, T(\tau))\|^2$ and 
$\|R(z, T(\tau)) S(\tau)FR(z, T(\tau))\|^2$.

\subsection{Integral of $\|R(z, T(\tau))\|^2$}
\label{sec:Integral_RT}
We obtain the following result on the integral of 
$\|R(z, T(\tau))\|^2$ on circles in $\mathbb{C}$.
\begin{lmm}
	\label{lem:Resol_T_int}
	If  {\rm (A\ref{assump:finite_unstable})} and {\rm (A\ref{assump:imaginary})} 
	hold, then
	the $C_0$-semigroup $(T(t))_{t\geq 0}$ satisfies
	the following properties for a fixed $\tau >0$:
	\begin{enumerate}
		\item One has
		\begin{subequations}
			\label{eq:Resol_T_int}
			\begin{align}
				\lim_{r\downarrow 1} ~(r-1)
				\int^{2\pi}_0 
				\big\|R\big(re^{i\theta}, T(\tau)\big) x\big\|^2 d\theta &= 0\quad
				\text{for all $x \in X$ and} \label{eq:Resol_T_inta}\\
				\lim_{r\downarrow 1} ~(r-1)
				\int^{2\pi}_0 
				\big\|R\big(re^{i\theta}, T(\tau)^*\big) y\big\|^2 d\theta &= 0\quad
				\text{for all $y \in X$}.\label{eq:Resol_T_intb}
			\end{align}
		\end{subequations}
		\item 
		Let $0 < \delta \leq \alpha/2$. Then 
		\begin{subequations}
			\label{eq:poly_Resol_T_int}
			\begin{align}
				\lim_{r\downarrow 1} \Lambda_{\delta/\alpha}(r)
				\int^{2\pi}_0 
				\big\|R\big(re^{i\theta}, T(\tau)\big) x\big\|^2 d\theta &= 0 \quad
				\text{for all $x \in \mathcal{D}^{\delta}$ and} \label{eq:poly_Resol_T_inta}\\
				\lim_{r\downarrow 1} \Lambda_{\delta/\alpha}(r)
				\int^{2\pi}_0 
				\big\|R\big(re^{i\theta}, T(\tau)^*\big) y\big\|^2 d\theta &= 0\quad
				\text{for all $y \in \mathcal{D}^{\delta}_*$},\label{eq:poly_Resol_T_intb}
			\end{align}
			where $\Lambda_{\delta/\alpha}$ is defined as in Proposition~\Rref{prop:discrete_poly_decay}.
		\end{subequations}
	\end{enumerate}
\end{lmm}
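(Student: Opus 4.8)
The plan is to reduce each integral, via the eigen-expansion of $T(\tau)$ and an elementary Poisson-kernel identity, to a numerical series to which dominated convergence applies as $r\downarrow 1$.

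I would first note that, under (A\ref{assump:finite_unstable}), only finitely many $\lambda_n$ lie in $\mathbb{C}_0$, since $\mathbb{C}_0\subset\mathbb{C}_{-\omega}\cap(\mathbb{C}\setminus\Omega_{\alpha,\Upsilon})$; hence $\sigma(T(\tau))=\overline{\{e^{\tau\lambda_n}:n\in\mathbb{N}\}}$ meets $\mathbb{E}_1$ in at most finitely many points, all of modulus bounded away from $1$, so $\{z:|z|=r\}\subset\rho(T(\tau))$ for every $r\in(1,1+\varepsilon_0)$ and some $\varepsilon_0>0$. Writing $\rho_n:=|e^{\tau\lambda_n}|=e^{\tau\re\lambda_n}$, assumption (A\ref{assump:imaginary}) gives $\rho_n\neq 1$ for all $n$, while for $N_{\rm b}$ as in \eqref{eq:lambda_cond_large} one has $\rho_n<1$ and $|\lambda_n|\geq\kappa>0$ for all $n\geq N_{\rm b}$. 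For $r\in(1,1+\varepsilon_0)$ the eigen-expansion $T(\tau)\phi_n=e^{\tau\lambda_n}\phi_n$ yields $R(re^{i\theta},T(\tau))x=\sum_n(re^{i\theta}-e^{\tau\lambda_n})^{-1}\langle x,\psi_n\rangle\phi_n$, so the Riesz-basis norm equivalence and Tonelli's theorem give
\[
\int_0^{2\pi}\big\|R\big(re^{i\theta},T(\tau)\big)x\big\|^2\,d\theta
\;\leq\; M_{\rm b}\sum_{n=1}^\infty|\langle x,\psi_n\rangle|^2\int_0^{2\pi}\frac{d\theta}{|re^{i\theta}-e^{\tau\lambda_n}|^2}
\;=\; 2\pi M_{\rm b}\sum_{n=1}^\infty\frac{|\langle x,\psi_n\rangle|^2}{|r^2-\rho_n^2|},
\]
using $\int_0^{2\pi}|re^{i\theta}-w|^{-2}\,d\theta=2\pi/\bigl|r^2-|w|^2\bigr|$ for $|w|\neq r$. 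Since $(T(\tau)^*)$ is generated by the Riesz-spectral operator $A^*$, whose eigenvalues $\overline{\lambda_n}$ have the same moduli $\rho_n$ and $|\overline{\lambda_n}|=|\lambda_n|$, the analogous bound holds with $|\langle x,\psi_n\rangle|$ replaced by $|\langle y,\phi_n\rangle|$.

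For part a) I would split the series at $N_{\rm b}$: the finitely many terms with $n<N_{\rm b}$, multiplied by $r-1$, tend to $0$ because $|r^2-\rho_n^2|\to|1-\rho_n^2|>0$; and for $n\geq N_{\rm b}$ one has $\rho_n<1<r$, whence $0\leq(r-1)/(r^2-\rho_n^2)\leq(r-1)/(r^2-1)=1/(r+1)\leq1$, a quantity bounded by the summable dominant $1$ and tending to $0$ for each fixed $n$, so dominated convergence gives $(r-1)\sum_{n\geq N_{\rm b}}|\langle x,\psi_n\rangle|^2/(r^2-\rho_n^2)\to0$. As the integrals are nonnegative, \eqref{eq:Resol_T_inta} follows, and \eqref{eq:Resol_T_intb} is identical with $y$ and $\phi_n$.

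For part b) the additional ingredient is a lower bound $1-\rho_n\geq c_\tau|\lambda_n|^{-\alpha}$ valid for all $n\geq N_{\rm b}$ with a constant $c_\tau>0$; this is exactly the estimate obtained, under (A\ref{assump:finite_unstable}), inside the proof of Lemma~\ref{lem:frac_lam_bound} (for $n$ large, either $\re\lambda_n\leq-\omega$ or $\lambda_n\in\Omega_{\alpha,\Upsilon}$, so $|\re\lambda_n|\gtrsim|\lambda_n|^{-\alpha}$, and $1-e^{-s}\gtrsim\min\{1,s\}$ together with $\inf_{n\geq N_{\rm b}}|\lambda_n|>0$ finish it). Using $r^2-\rho_n^2\geq(r-1)+(1-\rho_n)\geq(r-1)+c_\tau|\lambda_n|^{-\alpha}$, setting $t:=r-1$, $q:=2\delta/\alpha\in(0,1]$, and comparing the cases $|\lambda_n|^{-\alpha}\geq t$ and $|\lambda_n|^{-\alpha}<t$, one gets
\[
|\lambda_n|^{2\delta}(r^2-\rho_n^2)\;\geq\;|\lambda_n|^{\alpha q}t+c_\tau|\lambda_n|^{\alpha(q-1)}\;\geq\;\min\{1,c_\tau\}\,t^{\,1-q}\;=\;\min\{1,c_\tau\}\,\Lambda_{\delta/\alpha}(r)\qquad(0<\delta<\alpha/2),
\]
and $|\lambda_n|^{\alpha}(r^2-\rho_n^2)\geq c_\tau$ when $\delta=\alpha/2$. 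Hence, for $x\in\mathcal{D}^\delta$ (so $c_n:=|\lambda_n|^{2\delta}|\langle x,\psi_n\rangle|^2$ is summable, and $\lambda_n\neq0$ by (A\ref{assump:imaginary})),
\[
\Lambda_{\delta/\alpha}(r)\sum_{n\geq N_{\rm b}}\frac{|\langle x,\psi_n\rangle|^2}{r^2-\rho_n^2}
\;=\;\sum_{n\geq N_{\rm b}}c_n\,\frac{\Lambda_{\delta/\alpha}(r)}{|\lambda_n|^{2\delta}(r^2-\rho_n^2)},
\]
where the ratio is bounded uniformly in $n$ and $r$ (by $1/\min\{1,c_\tau\}$ if $\delta<\alpha/2$, and by $\bigl(c_\tau|\log(r-1)|\bigr)^{-1}$ if $\delta=\alpha/2$) and tends to $0$ as $r\downarrow1$ for each fixed $n$; dominated convergence gives the limit $0$. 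The finitely many terms $n<N_{\rm b}$ vanish in the limit since $\Lambda_{\delta/\alpha}(r)\to0$, and \eqref{eq:poly_Resol_T_intb} follows verbatim from $f\in\mathcal{D}^\delta_*$. I expect the main obstacle to be the geometric bound $1-\rho_n\gtrsim|\lambda_n|^{-\alpha}$ and the two-case comparison yielding $|\lambda_n|^{2\delta}(r^2-\rho_n^2)\gtrsim\Lambda_{\delta/\alpha}(r)$ uniformly in $n$; the remainder is dominated convergence on top of the explicit Poisson integral.
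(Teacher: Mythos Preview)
Your argument is correct, but it proceeds by a genuinely different route from the paper. You expand $R(re^{i\theta},T(\tau))x$ in the Riesz basis, evaluate $\int_0^{2\pi}|re^{i\theta}-e^{\tau\lambda_n}|^{-2}\,d\theta=2\pi/|r^2-\rho_n^2|$ explicitly, and then run dominated convergence on the resulting numerical series, using the eigenvalue geometry (the bound $1-\rho_n\gtrsim|\lambda_n|^{-\alpha}$, which is indeed extractable from the case analysis in Lemma~\ref{lem:frac_lam_bound} and also underlies Lemma~\ref{lem:z_lambda_bound}) together with the two-regime comparison $|\lambda_n|^{2\delta}\bigl((r-1)+c_\tau|\lambda_n|^{-\alpha}\bigr)\gtrsim\Lambda_{\delta/\alpha}(r)$. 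The paper instead splits $x=x^++x^-$ via the spectral projection $\Pi$, treats the finite-dimensional piece $x^+$ by the same direct estimate you use for $n<N_{\rm b}$, and for $x^-$ appeals to abstract machinery already in place: in part~a) it invokes the polynomial (hence strong) stability of $(T^-(t))_{t\geq0}$ from Lemma~\ref{lem:T_minus_poly_stable} and then the resolvent characterization of strong stability in Theorem~\ref{thm:strong_stability_resol}; in part~b) it uses the continuous-time decay $\|T^-(k\tau)x^-\|=o(k^{-\delta/\alpha})$ from Theorem~\ref{thm:decay_charac} and feeds it into Proposition~\ref{prop:discrete_poly_decay}.a). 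Your approach is more self-contained and makes the role of the eigenvalue geometry completely explicit; the paper's approach is shorter on the page because it recycles results proved elsewhere, and it avoids recomputing the Poisson integral. Neither argument is circular: the geometric lemmas you cite are established in Section~\ref{sec:spectrum} independently of the present statement.
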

\begin{proof}
	a) 
	Take $\tau >0$.
	To obtain \eqref{eq:Resol_T_inta}, we
	apply the spectral decomposition by the projection $\Pi$ given in \eqref{eq:projection}. 
	Let $x \in X$, and 
	define $x^+\coloneqq \Pi x \in X^+$ and $x^-\coloneqq (I-\Pi)x \in X^-$.
	Since $(d_1+d_2)^2 \leq 2(d_1^2+d_2^2)$ 
	for every $d_1,d_2 \geq 0$,
	it follows that in order to show \eqref{eq:Resol_T_inta}, it suffices 
	to show that 
	\[
	\lim_{r \downarrow 1} ~(r-1)
	\int^{2\pi}_0 	\big\|R\big(re^{i\theta}, T(\tau)\big) x^+\big\|^2 d\theta = 0
	\quad \text{and} \quad
	\lim_{r \downarrow 1} ~(r-1)
	\int^{2\pi}_0 	\big\|R\big(re^{i\theta}, T(\tau)\big) x^-\big\|^2 d\theta = 0.
	\]
	
	There exist constants $r_0 > 1$ and
	$c_0 >0$ such that $|r e^{i \theta} - e^{\tau \lambda_n} | \geq c_0$ for all $r \in (1,r_0)$,
	$\theta \in [0,2\pi)$, and
	$1 \leq n \leq  N_{\rm a}-1$, where $N_{\rm a} \in \mathbb{N}$ satisfies
	\eqref{eq:Ns_def}. We have that 
	for all $r \in (1,r_0)$,
	\begin{align}
		\int^{2\pi}_0 
		\big\|R\big(r e^{i \theta}, T(\tau)\big) x^+\big\|^2 d\theta
		&\leq M_{\rm b} \sum_{n=1}^{N_{\rm a}-1} |\langle 
		x^+ , \psi_n
		\rangle|^2 \int^{2\pi}_0 \frac{1}{|re^{i\theta} - e^{\tau \lambda_n} |^2 } d\theta \notag \\
		&\leq \frac{2\pi M_{\rm b}}{ c_0^2} \sum_{n=1}^{N_{\rm a}-1} |\langle 
		x^+ , \psi_n
		\rangle|^2.
		\label{eq:x_plus_bound}
	\end{align}
	Therefore,
	\[
	\lim_{r\downarrow 1} ~(r-1)
	\int^{2\pi}_0 
	\big\|R\big(r e^{i \theta}, T(\tau)\big) x^+\big\|^2  d\theta = 0.
	\]
	Since 
	the discrete semigroup
	$(T^-(\tau)^k )_{k \in \mathbb{N}}$ is strongly stable
	by Lemma~\ref{lem:T_minus_poly_stable},  we see from
	Theorem~\ref{thm:strong_stability_resol} that
	\[
	\lim_{r\downarrow 1} ~(r-1)
	\int^{2\pi}_0 
	\big\|R\big(r e^{i \theta}, T^-(\tau)\big) x^-\big\|^2  d\theta =0.
	\]
	Hence \eqref{eq:Resol_T_inta} holds.
	Applying the spectral decomposition for $A^*$ as in the case of $A$,
	we obtain \eqref{eq:Resol_T_intb}.
	
	b) Take
	$x \in \mathcal{D}^{\delta}$, and
	define $x^+\coloneqq \Pi x \in X^+$ and $x^-\coloneqq (I-\Pi)x \in X^-$.
	From the estimate \eqref{eq:x_plus_bound}, we obtain
	\[
	\lim_{r\downarrow 1} \Lambda_{\delta/\alpha}(r)
	\int^{2\pi}_0 
	\big\|R\big(r e^{i \theta}, T(\tau)\big) x^+\big\|^2  d\theta = 0.
	\]
	By Lemma~\ref{lem:T_minus_poly_stable}, 
	$(T^-(t))_{t \geq 0}$ is polynomially stable with parameter $\alpha$.
	Since  $x^- \in D((-A^-)^{\delta})$, it follows
	from Theorem~\ref{thm:decay_charac} that 
	\[
	\|T^-(\tau)^k x^-\| = 
	\|T^-(k\tau) x^-\| =  o\left(
	\frac{1}{k^{\delta/\alpha}}
	\right) \qquad (k \to \infty).
	\]
	Proposition~\ref{prop:discrete_poly_decay}.a) implies that
	\[
	\lim_{r\downarrow 1} \Lambda_{\delta/\alpha}(r)
	\int^{2\pi}_0 
	\big\|R\big(r e^{i \theta}, T^-(\tau)\big) x^-\big\|^2  d\theta =0.
	\]
	Therefore, \eqref{eq:poly_Resol_T_inta} holds. Analogously, 
	we obtain
	\eqref{eq:poly_Resol_T_intb} by the spectral decomposition for $A^*$.
\end{proof}

\subsection{Integral of $\|R(z, T(\tau)) S(\tau)FR(z, T(\tau))\|^2$}
\label{sec:Integral_RSTR}
Next, we study
the integral of $\|R(z, T(\tau)) S(\tau) FR(z, T(\tau))\|^2$ on circles in $\mathbb{C}$.
\begin{prpstn}
	\label{prop:RSFR_bound}
	Suppose that {\rm (A\ref{assump:finite_unstable})} and {\rm (A\ref{assump:imaginary})} hold.
	Let $b \in \mathcal{D}^{\beta}$
	and $f \in \mathcal{D}_*^{\gamma}$
	for some $\beta ,\gamma \geq 0$ satisfying $\beta+ \gamma \geq \alpha$.
	Then the following statements hold for a fixed $\tau >0$:
	\begin{enumerate}
		\item 
		One has
		\begin{align}
			\label{eq:RSFR_estimate}
			\lim_{r \downarrow 1} ~(r-1)
			\int^{2\pi}_0\big\|R\big(re^{i\theta}, T(\tau)\big) S(\tau)\big\|^2 \,  \big\|FR\big(re^{i\theta}, T(\tau)\big)\big\|^2 d\theta  = 0.
		\end{align}
		\item Let $0< \delta \leq \alpha/2$. If $\delta \leq 1$ or if $\beta \geq \alpha$, then
		\begin{subequations}
			\label{eq:RSFR_estimate_poly_decay}
			\begin{align}
				&\lim_{r\downarrow 1}
				\Lambda_{\delta/\alpha}(r)
				\int^{2\pi}_0 \big\|R\big(re^{i\theta}, T(\tau)\big) S(\tau)\big\|^2 
				\,
				\big\|F^+R\big(re^{i\theta}, T^+(\tau)\big) \big\|^2 d\theta = 0 \quad \text{and}
				\label{eq:RSFR_estimate_poly_decay_p}\\
				&\lim_{r\downarrow 1}
				\Lambda_{\delta/\alpha}(r)
				\int^{2\pi}_0 \big\|R\big(re^{i\theta}, T(\tau)\big) S(\tau)\big\|^2 
				\,
				\big\|F^-R\big(re^{i\theta}, 
				T^-(\tau)\big) (-A^-)^{-\delta} \big\|^2 d\theta= 0,
				\label{eq:RSFR_estimate_poly_decay_m}
			\end{align}
		\end{subequations}
		where $\Lambda_{\delta/\alpha}$ is defined as in Proposition~\Rref{prop:discrete_poly_decay}.
	\end{enumerate}
\end{prpstn}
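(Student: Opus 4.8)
The plan is to reduce every integrand to a product of two squared resolvent norms and to estimate these by the resolvent conditions of Section~\ref{sec:resolvent_cond}. Since the input and output spaces are $\mathbb{C}$, both $S(\tau)$ and $F$ have rank one; writing $s_\tau:=\int_0^\tau T(s)b\,ds\in X$, one has $\|R(z,T(\tau))S(\tau)\|=\|R(z,T(\tau))s_\tau\|$ and $\|FR(z,T(\tau))\|=\|R(\bar z,T(\tau)^*)f\|$, and similarly with $F^\pm$, $T^\pm(\tau)$. The structural fact that drives the argument is that convolving with the semigroup gains one degree of smoothness: from $\langle s_\tau,\psi_n\rangle=\tfrac{e^{\tau\lambda_n}-1}{\lambda_n}\langle b,\psi_n\rangle$ and $\sup_n\re\lambda_n<\infty$ it follows that $s_\tau\in\mathcal{D}^{\beta+1}$ whenever $b\in\mathcal{D}^{\beta}$. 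Next I apply the spectral decomposition of Section~\ref{sec:Spec_decomp}: $X=X^+\oplus X^-$ with $\dim X^+<\infty$, the spectrum of $T^+(\tau)$ lies outside some $\overline{\mathbb{D}_{r_0}}$ with $r_0>1$ (so $R(\cdot,T^+(\tau))$, $R(\cdot,T^+_*(\tau))$ and $F^+R(\cdot,T^+(\tau))$ are bounded by a $\tau$-dependent constant on $\{1\le|z|<r_0\}$), while $(T^-(\tau)^k)_k$ and $(T^-_*(\tau)^k)_k$ are power bounded, indeed discrete polynomially stable, with $\mathbb{E}_1\subset\rho(T^-(\tau))\cap\rho(T^-_*(\tau))$, by Lemma~\ref{lem:T_minus_poly_stable}. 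Splitting $s_\tau=s_\tau^++s_\tau^-$ and $f=f^++f^-$ and multiplying out, each integrand becomes a finite sum of terms.

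Every term containing at least one of the bounded factors $\|R(z,T^+(\tau))s_\tau^+\|$, $\|R(z,T^+_*(\tau))f^+\|$, $\|F^+R(z,T^+(\tau))\|$ is handled by controlling the remaining factor. For part a) one invokes strong stability of $(T^-(\tau)^k)_k$ and $(T^-_*(\tau)^k)_k$: Theorem~\ref{thm:strong_stability_resol}.b) gives $\lim_{r\downarrow1}(r-1)\int_0^{2\pi}\|R(re^{i\theta},T^-(\tau))s_\tau^-\|^2\,d\theta=0$ and the analogous statement for $T^-_*(\tau)$ and $f^-$. For part b) one uses that $s_\tau^-\in D((-A^-)^{\delta})$ and $(-A^-_*)^{-\delta}f^-\in D((-A^-_*)^{\delta})$; here the hypothesis ``$\delta\le1$ or $\beta\ge\alpha$'' is exactly what gives $\delta\le\beta+1$, so that $s_\tau^-\in\mathcal{D}^{\beta+1}$ indeed lies in $D((-A^-)^{\delta})$. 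Theorem~\ref{thm:decay_charac}.b) then yields $\|T^-(k\tau)s_\tau^-\|=o(k^{-\delta/\alpha})$ and $\|T^-_*(k\tau)(-A^-_*)^{-\delta}f^-\|=o(k^{-\delta/\alpha})$ with $\delta/\alpha\le1/2$, and Proposition~\ref{prop:discrete_poly_decay}.a) converts these into $\lim_{r\downarrow1}\Lambda_{\delta/\alpha}(r)\int_0^{2\pi}\|R(re^{i\theta},T^-(\tau))s_\tau^-\|^2\,d\theta=0$ and its counterpart; the purely finite-dimensional cross terms vanish simply because $\Lambda_{\delta/\alpha}(r)\to0$.

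What remains is the single ``tail $\times$ tail'' term, namely $(r-1)\int_0^{2\pi}\|R(re^{i\theta},T^-(\tau))s_\tau^-\|^2\,\|R(re^{-i\theta},T^-_*(\tau))g\|^2\,d\theta$ for part a) with $g=f^-$, and its $\Lambda_{\delta/\alpha}(r)$-weighted version with $g=(-A^-_*)^{-\delta}f^-$ for part b). If $\beta+1\ge\alpha$, or $\gamma\ge\alpha$ (resp. $\gamma+\delta\ge\alpha$), then one of the two resolvent factors is uniformly bounded on $\overline{\mathbb{E}_1}$ — by the elementary eigenfunction estimate that $\|R(z,T^-(\tau))(-A^-)^{-\alpha}\|$ and $\|R(z,T^-_*(\tau))(-A^-_*)^{-\alpha}\|$ are bounded on $\overline{\mathbb{E}_1}$, proved just as in Lemma~\ref{lem:frac_lam_bound} using $|re^{i\theta}-e^{\tau\lambda_n}|\ge1-e^{\tau\re\lambda_n}\gtrsim\min\{1,|\lambda_n|^{-\alpha}\}$ — and this term reduces to the previous paragraph. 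In the remaining case both factors blow up as $r\downarrow1$, and I would argue by expanding $s_\tau^-$, $f^-$ in the Riesz basis, reducing the integral to $\sum_{n,m}|\langle s_\tau^-,\psi_n\rangle|^2|\langle f^-,\phi_m\rangle|^2\,J_{n,m}(r)$ with $J_{n,m}(r)=\int_0^{2\pi}|re^{i\theta}-e^{\tau\lambda_n}|^{-2}|re^{i\theta}-e^{\tau\overline{\lambda_m}}|^{-2}\,d\theta$, evaluating $J_{n,m}(r)$ by residues (all four poles $e^{\tau\lambda_n},e^{\tau\overline{\lambda_m}}$ lie in $\mathbb{D}_r$ and $r^2/\overline{e^{\tau\lambda_n}},r^2/\overline{e^{\tau\overline{\lambda_m}}}$ outside, since $\re\lambda_n,\re\lambda_m<0$ on $X^-$), bounding the resulting rational expression via $r^2-|e^{\tau\lambda_n}|^2\gtrsim(r-1)+|\lambda_n|^{-\alpha}$ (which holds up to finitely many eigenvalues because $\lambda_n\in\Omega_{\alpha,\Upsilon}$) together with $|\langle s_\tau^-,\psi_n\rangle|\lesssim|\lambda_n|^{-1}|\langle b,\psi_n\rangle|$, and then summing and optimizing in $r$. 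The point is that after convolution $b\in\mathcal{D}^{\beta}$ gives $|\lambda_n|^{2(\beta+1)}$-summability of $(\langle s_\tau^-,\psi_n\rangle)$ while $f^-\in\mathcal{D}_*^{\gamma}$ (resp. $(-A^-_*)^{-\delta}f^-$) gives $|\lambda_m|^{2\gamma}$- (resp. $|\lambda_m|^{2(\gamma+\delta)}$-) summability; since $(\beta+1)+\gamma\ge\alpha+1$ exceeds the critical exponent by $1$, the extra $\tfrac1\alpha$ makes the double series beat the weight, and one finds that the weighted integral is $O((r-1)^{2/\alpha})$ in both a) and b) — the factor $(-A^-)^{-\delta}$ in \eqref{eq:RSFR_estimate_poly_decay_m} exactly compensating for the weaker weight $\Lambda_{\delta/\alpha}(r)=(r-1)^{1-2\delta/\alpha}$ relative to $r-1$, which is why the final exponent is independent of $\delta$.

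I expect this last term to be the main obstacle. The reductions of the first two paragraphs are routine once the rank-one factorisation and the spectral splitting are in place, but estimating the $L^2(0,2\pi)$-integral of a product of two resolvent norms that both blow up as $r\downarrow1$ requires the residue evaluation of $J_{n,m}(r)$, a careful two-parameter bound in $n$ and $m$, and the bookkeeping of decay exponents; it is precisely the combination of ``$\beta+\gamma\ge\alpha$'' with the one-derivative smoothing of $S(\tau)$ that closes the estimate.
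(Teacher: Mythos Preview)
Your rank-one factorisations, the spectral splitting $X=X^+\oplus X^-$, and the handling of all terms involving at least one finite-dimensional factor are correct and match the paper's treatment (the paper packages these as Lemma~\ref{lem:Resol_T_int} and Lemma~\ref{lem:F_adjoint}). Your observation that $s_\tau\in\mathcal{D}^{\beta+1}$ is right and even slightly enlarges the ``one factor uniformly bounded'' regime compared with the paper, which splits off only $\beta\geq\alpha$ and $\gamma\geq\alpha$ (Lemmas~\ref{lem:g_zero} and~\ref{lem:b_zero}); your reading of the hypothesis ``$\delta\leq1$ or $\beta\geq\alpha$'' as precisely the condition $\delta\leq\beta+1$ guaranteeing $s_\tau^-\in D((-A^-)^\delta)$ is also correct.

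The genuine divergence is in the remaining ``tail $\times$ tail'' case where \emph{neither} resolvent factor is uniformly bounded. You propose a direct residue evaluation of $J_{n,m}(r)$ followed by a two-parameter summation; you yourself flag this as the main obstacle, give no details, and the asserted rate $O((r-1)^{2/\alpha})$ is unjustified as stated. The paper bypasses this computation entirely by an interpolation argument (Lemmas~\ref{lem:bg_posi1} and~\ref{lem:bg_posi2}): pick $0<\beta_1\leq\beta$ and $0<\gamma_1\leq\gamma$ with $\beta_1+\gamma_1=\alpha$, set $b_1:=\int_0^\tau T(s)(-A^-)^{\beta_1}b^-\,ds$ and $f_1:=(-A^-_*)^{\gamma_1}f^-$, and apply the moment inequality to get
\[
\Bigl\|R\bigl(z,T^-(\tau)\bigr)\int_0^\tau T(s)b^-\,ds\Bigr\|\leq\varsigma_1\,\bigl\|R\bigl(z,T^-(\tau)\bigr)b_1\bigr\|^{1-\beta_1/\alpha}\,\bigl\|(-A^-)^{-\alpha}R\bigl(z,T^-(\tau)\bigr)b_1\bigr\|^{\beta_1/\alpha},
\]
and analogously for the $F$-factor; the last factor on the right is uniformly bounded near the unit circle by Lemma~\ref{lem:z_lambda_bound}. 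Since $p:=\alpha/(\alpha-\beta_1)$ and $q:=\alpha/(\alpha-\gamma_1)$ are conjugate exponents, H\"older's inequality then splits the product integral into two integrals of the form $\int_0^{2\pi}\|R(re^{i\theta},T(\tau))v\|^2\,d\theta$, each handled by Lemma~\ref{lem:Resol_T_int}. This interpolation trick is standard, short, and completely avoids the residue calculation you anticipate as the obstacle; I recommend adopting it for the case $\beta,\gamma<\alpha$.
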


To prove Proposition~\ref{prop:RSFR_bound},
we start with a simple result.
Recall that 
$b^+$, $b^-$, $f^+$, and $f^-$ were defined as
$b^+ \coloneqq \Pi b$, $b^- \coloneqq (I-\Pi) b$, $f^+ \coloneqq 
\Pi^* f$, and $f^- \coloneqq (I - \Pi^*)  f$ in Section~\ref{sec:Spec_decomp}.
\begin{lmm}
	\label{lem:F_adjoint}
	Suppose that
	{\rm (A\ref{assump:finite_unstable})} holds. 
	Let $\tau >0$ and $z \in \rho(T(\tau))$.
	Under the spectral decomposition described in Section~\ref{sec:Spec_decomp},
	the following inequalities hold for a fixed $\delta >0$:
	\begin{enumerate}
		\item
		$\|F^+R(z,T^+(\tau))\| 
		\leq  \|R(\overline{z},T(\tau)^*) f^+\|$. \vspace{3pt}
		\item
		$\|F^-R(z,T^-(\tau)) (-A^-)^{-\delta}\| 
		\leq
		\|
		R(\overline{z}, 
		T(\tau)^*) (-A^-_*)^{-\delta}f^-
		\|.
		$
	\end{enumerate}
\end{lmm}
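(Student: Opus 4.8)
The plan is to deduce both inequalities from the Cauchy--Schwarz inequality after rewriting, for a fixed vector $x$ in the appropriate subspace, the scalar $F^{+}R(z,T^{+}(\tau))x$ (respectively $F^{-}R(z,T^{-}(\tau))(-A^{-})^{-\delta}x$) as an inner product of $x$ with a vector built from $f^{+}$ (respectively $f^{-}$); the whole content is to transfer every operator sitting to the left of $f^{\pm}$ in the pairing onto the right-hand factor by passing to an adjoint, using the spectral decomposition of Section~\ref{sec:Spec_decomp}. For the first inequality this is routine. Since $z\in\rho(T(\tau))$ and $\sigma(T^{+}(\tau))\subset\sigma(T(\tau))$, the projection $\Pi$ commutes with $R(z,T(\tau))$, so $X^{+}$ is $R(z,T(\tau))$-invariant and $R(z,T^{+}(\tau))=R(z,T(\tau))|_{X^{+}}$; using $F^{+}x^{+}=\langle x^{+},f^{+}\rangle$, the identity $R(z,T(\tau))^{*}=R(\overline z,T(\tau)^{*})$, and $f^{+}=\Pi^{*}f\in X^{+}_{*}$, one obtains $F^{+}R(z,T^{+}(\tau))x^{+}=\langle x^{+},R(\overline z,T(\tau)^{*})f^{+}\rangle$, and taking the supremum over the unit ball of $X^{+}$ yields the estimate.

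For the second inequality the new ingredient is the duality of the negative fractional powers across the two (in general distinct) subspaces $X^{-}$ and $X^{-}_{*}$ with respect to the ambient inner product,
\begin{equation}
\label{eq:frac_power_duality}
\langle (-A^{-})^{-\delta}u,w\rangle=\langle u,(-A^{-}_{*})^{-\delta}w\rangle\qquad\text{for all }u\in X^{-},\ w\in X^{-}_{*}.
\end{equation}
Granting \eqref{eq:frac_power_duality}, the argument parallels the first case: with $R(z,T^{-}(\tau))=R(z,T(\tau))|_{X^{-}}$, $F^{-}x^{-}=\langle x^{-},f^{-}\rangle$, and $f^{-}=(I-\Pi^{*})f\in X^{-}_{*}$, one has
\[
F^{-}R(z,T^{-}(\tau))(-A^{-})^{-\delta}x^{-}=\big\langle (-A^{-})^{-\delta}x^{-},R(\overline z,T(\tau)^{*})f^{-}\big\rangle=\big\langle x^{-},(-A^{-}_{*})^{-\delta}R(\overline z,T(\tau)^{*})f^{-}\big\rangle ,
\]
the second equality using \eqref{eq:frac_power_duality} and $R(\overline z,T(\tau)^{*})f^{-}\in X^{-}_{*}$. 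Since $(-A^{-}_{*})^{-\delta}$ commutes with $T^{-}_{*}(t)$ for all $t\ge0$, hence with $zI-T^{-}_{*}(\tau)$ and with its inverse $R(\overline z,T^{-}_{*}(\tau))=R(\overline z,T(\tau)^{*})|_{X^{-}_{*}}$, and since $(-A^{-}_{*})^{-\delta}f^{-}\in X^{-}_{*}$, this last vector equals $R(\overline z,T(\tau)^{*})(-A^{-}_{*})^{-\delta}f^{-}$; Cauchy--Schwarz together with the supremum over the unit ball of $X^{-}$ then gives the second estimate.

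The remaining point, \eqref{eq:frac_power_duality}, is the main obstacle: because $X^{-}$ and $X^{-}_{*}$ are genuinely different subspaces of $X$ and the pairing is the inner product of $X$ rather than the Hilbert structure of $X^{-}$ alone, one cannot simply invoke ``the Hilbert-space adjoint''. I would first treat $0<\delta<1$. For $s>0$ one has $s\in\rho(A^{-})\cap\rho(A^{-}_{*})$ because both spectra lie in $\{\re\le0\}$; writing $p=R(s,A^{-})u$ and $q=R(s,A^{-}_{*})w$, the identities $A^{-}=A|_{X^{-}}$, $A^{-}_{*}=A^{*}|_{X^{-}_{*}}$ and the defining property of $A^{*}$ give
\[
\langle u,q\rangle=\langle (sI-A^{-})p,q\rangle=s\langle p,q\rangle-\langle Ap,q\rangle=s\langle p,q\rangle-\langle p,A^{*}q\rangle=\langle p,(sI-A^{-}_{*})q\rangle=\langle p,w\rangle ,
\]
that is, $\langle R(s,A^{-})u,w\rangle=\langle u,R(s,A^{-}_{*})w\rangle$. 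Recalling that $-A^{-}$ and $-A^{-}_{*}$ are invertible sectorial operators, so that $(-A^{-})^{-\delta}$ and $(-A^{-}_{*})^{-\delta}$ are bounded, the moment representation $(-A^{-})^{-\delta}=\frac{\sin\pi\delta}{\pi}\int_{0}^{\infty}s^{-\delta}R(s,A^{-})\,ds$ holds, and likewise for $A^{-}_{*}$, the integrals converging in operator norm since $\|R(s,A^{-})\|$ is bounded near $s=0$, is $O(1/s)$ as $s\to\infty$, and $0<\delta<1$. Multiplying the resolvent duality by $\frac{\sin\pi\delta}{\pi}s^{-\delta}$ and integrating — the interchange with $\langle\cdot,w\rangle$ and with $\langle u,\cdot\rangle$ being legitimate by norm convergence — yields \eqref{eq:frac_power_duality} for $0<\delta<1$. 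The case $\delta\ge1$ follows by writing $\delta=n+\delta_{0}$ with $n\in\mathbb{N}$ and $\delta_{0}\in[0,1)$, using the composition law $(-A^{-})^{-\delta}=(-A^{-})^{-n}(-A^{-})^{-\delta_{0}}$, and iterating the resolvent duality at $s=0$ for the integer part.

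As a shortcut one could bypass the sectorial functional calculus: $A^{-}$ is itself a Riesz-spectral operator, so $(-A^{-})^{-\delta}u=\sum_{n\ge N_{\rm a}}(-\lambda_{n})^{-\delta}\langle u,\psi_{n}\rangle\phi_{n}$ and $(-A^{-}_{*})^{-\delta}w=\sum_{n\ge N_{\rm a}}(-\overline{\lambda_{n}})^{-\delta}\langle w,\phi_{n}\rangle\psi_{n}$ with the principal branch, and \eqref{eq:frac_power_duality} follows termwise from $\overline{(-\overline{\lambda_{n}})^{-\delta}}=(-\lambda_{n})^{-\delta}$ and the biorthogonality of $(\phi_{n})_{n\in\mathbb{N}}$ and $(\psi_{n})_{n\in\mathbb{N}}$; this route, however, needs the (standard) fact that negative fractional powers of a Riesz-spectral operator act diagonally in its eigenbasis.
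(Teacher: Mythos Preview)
Your argument is correct. For the first inequality it is identical to the paper's. For the second inequality, the paper takes precisely your ``shortcut'': it expands
\[
\big\langle R\big(z,T(\tau)\big)(-A^{-})^{-\delta}x^{-},f^{-}\big\rangle
=\sum_{n\ge N_{\rm a}}\frac{\langle x^{-},\psi_{n}\rangle\langle\phi_{n},f^{-}\rangle}{(-\lambda_{n})^{\delta}(z-e^{\tau\lambda_{n}})}
=\big\langle x^{-},R\big(\overline z,T(\tau)^{*}\big)(-A^{-}_{*})^{-\delta}f^{-}\big\rangle
\]
directly in the Riesz basis, folding the duality of the fractional powers, the adjoint of the resolvent, and the commutation into a single termwise computation. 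Your primary route --- proving the abstract duality \(\langle(-A^{-})^{-\delta}u,w\rangle=\langle u,(-A^{-}_{*})^{-\delta}w\rangle\) via the resolvent identity and the Balakrishnan integral, and then commuting \((-A^{-}_{*})^{-\delta}\) past \(R(\overline z,T^{-}_{*}(\tau))\) --- is more general in that it does not rely on the Riesz-spectral structure and would survive in a setting where only sectoriality and invertibility of \(-A^{-}\) and \(-A^{-}_{*}\) are available; the price is that it is considerably longer than the one-line series computation the paper uses, which exploits exactly the diagonalisation that the Riesz basis provides.
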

\begin{proof}
	Let $\tau >0$ and $z \in \rho(T(\tau))$ be given.
	The inequality in a) follows from
	\begin{align*}
		\big\|F^+R\big(z,T^+(\tau)\big)\big\| 
		&= \sup\big\{ \big|F^+R\big(z,T^+(\tau)\big) x^+ \big|:x^+ \in X^+ \text{~with~} \|x^+\| = 1 \big\} \\
		&=\sup\big\{ \big| \big\langle 
		R\big(z,T(\tau)\big) x^+,f^+ \big\rangle \big|
		:x^+ \in X^+ \text{~with~} \|x^+\| = 1 \big\} \\
		&=\sup\big\{ \big|\big\langle 
		x^+,R\big(\overline{z},T(\tau)^*\big) f^+ \big\rangle \big|
		:x^+ \in X^+ \text{~with~} \|x^+\| = 1 \big\} \\
		&\leq \big \|R\big(\overline{z},T(\tau)^*\big) f^+ \big\|.
	\end{align*}
	In order to obtain the inequality in b), we again use
	\begin{align*}
		\big\|F^-R\big(z,T^-(\tau)\big) (-A^-)^{-\delta}\big\| 
		&=\sup\big\{ \big|\big\langle
		R\big(z,T(\tau)\big) (-A^-)^{-\delta}x^- ,f^-\big\rangle \big|:
		x^- \in X^- \text{~with~} \|x^-\| = 1 \big\}.
	\end{align*}
	A routine calculation shows that 
	\begin{align*}
		\big\langle R\big(z,T(\tau)\big) (-A^-)^{-\delta}x^-, f^- \big\rangle
		=
		\sum_{n=N_{\rm a}}^{\infty}
		\frac{\langle x^-,\psi_n \rangle \langle \phi_n,f^- \rangle}
		{(-\lambda_n)^{\delta} (z - e^{\lambda_n \tau})} =
		\big\langle x^-, R\big(\overline{z}, 
		T(\tau)^*\big) (-A^-_*)^{-\delta}f^- \big\rangle
	\end{align*}
	for all $x^- \in X^-$, where $N_{\rm a} \in \mathbb{N}$ satisfies
	\eqref{eq:Ns_def}.
	Therefore,
	\begin{align*}
		\big\|F^-R\big(z,T^-(\tau)\big) (-A^-)^{-\delta}\big\|  
		&=
		\sup\big\{ \big\langle x^-, R\big(\overline{z}, 
		T(\tau)^*\big) (-A^-_*)^{-\delta}f^- \big\rangle:
		x^- \in X^- \text{~with~} \|x^-\| = 1 \big\} \\
		&\leq
		\big\|
		R\big(\overline{z}, 
		T(\tau)^*\big) (-A^-_*)^{-\delta}f^-
		\big\|
	\end{align*}
	is obtained.
\end{proof}

We divide the proof of \eqref{eq:RSFR_estimate} and \eqref{eq:RSFR_estimate_poly_decay} into three cases:
(i) $\beta \geq \alpha$; (ii) $\gamma \geq \alpha$; and (iii) $\beta,\gamma <\alpha$,  as in the proof of Lemma~19 in \cite{Paunonen2014JDE}.
For the proof,
we introduce some constants.
Take $\tau >0$, and 
let $N_{\rm b} \in \mathbb{N}$ be such that \eqref{eq:lambda_cond_large}
holds.
Under (A\ref{assump:imaginary}),
there exist constants
$r_1 > 1$ and
$c_1 >0$ such that
\begin{equation}
	\label{eq:unstable_eig_bound}
	|z - e^{\tau \lambda_n} | \geq c_1
\end{equation}
for all $z \in \mathbb{D}_{r_1} \cap \mathbb{E}_1$ and
$1 \leq n \leq  N_{\rm b}-1$. 

\subsubsection{Case $\beta \geq \alpha$}
First, we consider the case $\beta \geq \alpha$.
\begin{lmm}
	\label{lem:g_zero}
	Suppose that 
	{\rm (A\ref{assump:finite_unstable})} and {\rm (A\ref{assump:imaginary})}
	hold. Let
	$b \in \mathcal{D}^{\beta}$ for some $\beta \geq \alpha$ and $f \in X$.
	Then
	\eqref{eq:RSFR_estimate}
	holds for a fixed $\tau >0$.
	Moreover, if $0 < \delta \leq  \alpha /2$, then
	\eqref{eq:RSFR_estimate_poly_decay} holds for a fixed $\tau >0$.
\end{lmm}
\begin{proof}
	Let $\tau >0$ be given.
	Since 
	\[
	\big\|FR\big(z, T(\tau)\big)\big\| = \big\|R\big(\overline{z}, T(\tau)^*\big) f\big\|,
	\]
	for all $z \in \rho(T(\tau))$,
	it follows from Lemma~\ref{lem:Resol_T_int}.a) that 
	\[
	\lim_{r \downarrow 1} ~(r-1)
	\int^{2\pi}_0 \big\|FR\big(re^{i\theta}, T(\tau)\big)\big\|^2  d\theta  = 0.
	\]
	In order to prove \eqref{eq:RSFR_estimate}, it suffices to verify that 
	\begin{equation}
		\label{eq:gamma_zero_bounded}
		\sup_{z \in \mathbb{D}_{r_1} \cap \mathbb{E}_{1}} \big\|R\big(z, T(\tau)\big)S(\tau)\big\|< \infty.
	\end{equation}
	
	Take $z \in \mathbb{D}_{r_1} \cap \mathbb{E}_{1}$.
	Recalling 
	the series expansion \eqref{eq:RS_series} for $R(z,T(\tau)) S(\tau)$, 
	we obtain
	\begin{align*}
		\big\|R\big(z, T(\tau)\big)S(\tau)\big\|^2 \leq 
		M_{\rm  b}
		\sum_{n=1}^\infty
		\left|
		\frac{1 - e^{\tau \lambda_n}}{z - e^{\tau \lambda_n}} \cdot 
		\frac{\langle b ,\psi_n\rangle }{\lambda_n}
		\right|^2.
	\end{align*}
	By (A\ref{assump:finite_unstable}) and (A\ref{assump:imaginary}),
	there is a constant $\kappa >0$ such that $|\lambda_n| \geq \kappa$ 
	for all $n \in \mathbb{N}$.
	By the estimate \eqref{eq:unstable_eig_bound},
	\begin{align}
		\label{eq:N1_smaller_gamma0}
		\sum_{n=1}^{N_{\rm b}-1}
		\left|
		\frac{1 - e^{\tau \lambda_n}}{z - e^{\tau \lambda_n}} \cdot 
		\frac{\langle b ,\psi_n\rangle }{\lambda_n}
		\right|^2 \leq 
		\frac{(1+e^{\tau \sup_{n \in \mathbb{N} } \re \lambda_n })^2}{c_1^2 \kappa^2}
		\sum_{n=1}^{N_{\rm b}-1} \left|
		\langle b ,\psi_n\rangle 
		\right|^2.
	\end{align}
	Lemma~\ref{lem:frac_lam_bound} with $\widetilde \alpha\coloneqq \beta$ shows that
	\begin{align}
		\label{eq:N1_larger_gamma0}
		\sum_{n=N_{\rm b}}^\infty
		\left|
		\frac{1 - e^{\tau \lambda_n}}{z - e^{\tau \lambda_n}} \cdot 
		\frac{\langle b ,\psi_n\rangle }{\lambda_n}
		\right|^2
		&\leq \Upsilon_1^2
		\sum_{n=N_{\rm b}}^\infty 
		\left|
		\langle b ,\psi_n\rangle 
		\right|^2  +  \Upsilon_2^2
		\sum_{n=N_{\rm b}}^\infty 
		|\lambda_n|^{2\beta} \, \left|
		\langle b ,\psi_n\rangle 
		\right|^2 
	\end{align}
	for some constants $\Upsilon_1,\Upsilon_2 >0$ independent of $z$.
	Since 
	$b \in \mathcal{D}^{\beta}$, the inequalities \eqref{eq:N1_smaller_gamma0}
	and \eqref{eq:N1_larger_gamma0} 
	yield
	\eqref{eq:gamma_zero_bounded}.
	
	Let $0 < \delta \leq  \alpha /2$.
	To show the second assertion \eqref{eq:RSFR_estimate_poly_decay}, 
	we observe from the estimate \eqref{eq:unstable_eig_bound} that
	\begin{equation*}
		\left\|R\big(z, T(\tau)^*\big) y^+ \right\|^2 
		\leq \frac{1}{M_{\rm a}}
		\sum_{n = 1}^{N_{\rm a}-1} \left|\frac{ 
			\langle y^+,\phi_n\rangle 
		}{z - e^{\tau \overline{\lambda_n }}} \right|^2 
		\leq 
		\frac{M_{\rm b}}{M_{\rm a}} \cdot 
		\frac{\left\| y^+\right\|^2}{c_1^2}
	\end{equation*}
	for all $y^+ \in X^+_*$ and
	$z \in \mathbb{D}_{r_1} \cap \mathbb{E}_1$.
	By this inequality and Lemma~\ref{lem:F_adjoint}.a), 
	\begin{align*}
		\limsup_{r \downarrow 1} \,\Lambda_{\delta/\alpha}(r) \int_0^{2\pi}
		\big\|F^+R\big(re^{i\theta}, T^+(\tau)\big) \big\|^2 d\theta  &\leq
		\limsup_{r \downarrow 1} \,\Lambda_{\delta/\alpha}(r)
		\int^{2\pi}_0 \big\|R\big(re^{i\theta}, T(\tau)^*\big)f^+ \big\|^2  d\theta \\
		&= 0.
	\end{align*}
	Combining Lemmas~\ref{lem:Resol_T_int}.b) and \ref{lem:F_adjoint}.b),
	we also obtain
	\begin{align*}
		&\limsup_{r \downarrow 1} \,\Lambda_{\delta/\alpha}(r)
		\int^{2\pi}_0
		\big\|F^-R\big(re^{i\theta}, 
		T^-(\tau)\big) (-A^-)^{-\delta} \big\|^2  d\theta \\
		&\qquad \leq
		\limsup_{r \downarrow 1} \,\Lambda_{\delta/\alpha}(r)
		\int^{2\pi}_0 \big\|R\big(re^{i\theta}, 
		T(\tau)^*\big) (-A^-_*)^{-\delta}f^- \big\|^2  d\theta  \\
		&\qquad = 0.
	\end{align*}
	The second assertion \eqref{eq:RSFR_estimate_poly_decay}
	then follows from the estimate \eqref{eq:gamma_zero_bounded}.
\end{proof}

\subsubsection{Case $\gamma \geq \alpha$}
For the case $\gamma \geq \alpha$ and the case $\beta,\gamma < \alpha$, we 
need a preliminary lemma.
Note that 
a constant $\Upsilon_0$ in the next lemma depends on the sampling
period $\tau$ unlike the constants $\Upsilon_1$ and $\Upsilon_2$ in Lemma~\ref{lem:frac_lam_bound}, 
because we consider the situation $r \downarrow 1$
for a fixed $\tau >0$
in Proposition~\ref{prop:RSFR_bound}.
\begin{lmm}
	\label{lem:z_lambda_bound}
	Suppose that  {\rm (A\ref{assump:finite_unstable})} and {\rm (A\ref{assump:imaginary})} hold.
	Fix $\tau >0$ and let $r_1 >1$ satisfy \eqref{eq:unstable_eig_bound}
	for all $z \in \mathbb{D}_{r_1} \cap \mathbb{E}_1$,
	$1 \leq n \leq  N_{\rm b}-1$ and some $c_1 >0$. Then
	there exists a constant $\Upsilon_0 >0$ such that 
	for all $z \in \mathbb{D}_{r_1} \cap \mathbb{E}_1$ and
	$n \in \mathbb{N}$,
	\begin{equation}
		\label{eq:Up_zero}
		\frac{1}{|z-e^{\tau \lambda_n} |\, |\lambda_n|^{\alpha}} \leq \Upsilon_0.
	\end{equation}
\end{lmm}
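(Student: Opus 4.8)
The plan is to split $\mathbb{N}$ into the finite block $\{1,\dots,N_{\rm b}-1\}$ of ``exceptional'' indices singled out in \eqref{eq:lambda_cond_large} and the tail $\{n\ge N_{\rm b}\}$, and to bound $\frac{1}{|z-e^{\tau\lambda_n}|\,|\lambda_n|^{\alpha}}$ on each block separately, uniformly in $z\in\mathbb{D}_{r_1}\cap\mathbb{E}_1$. Throughout I would use that, under {\rm (A\ref{assump:finite_unstable})} and {\rm (A\ref{assump:imaginary})}, there is a constant $\kappa>0$ with $|\lambda_n|\ge\kappa$ for all $n\in\mathbb{N}$ (as already observed in the proof of Lemma~\ref{lem:g_zero}). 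For the finite block the bound is immediate from the hypothesis \eqref{eq:unstable_eig_bound}: for every $z\in\mathbb{D}_{r_1}\cap\mathbb{E}_1$ and every $1\le n\le N_{\rm b}-1$ one has $|z-e^{\tau\lambda_n}|\ge c_1$, hence $\frac{1}{|z-e^{\tau\lambda_n}|\,|\lambda_n|^{\alpha}}\le\frac{1}{c_1\kappa^{\alpha}}$.

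For the tail $n\ge N_{\rm b}$ the first step is to remove the dependence on $z$. By \eqref{eq:lambda_cond_large}, for $n\ge N_{\rm b}$ one has $\re\lambda_n\le-\omega$ or $\lambda_n\in\Omega_{\alpha,\Upsilon}$, so in either case $\re\lambda_n<0$; since $|z|\ge 1$ on $\overline{\mathbb{E}_1}$, this gives
\[
|z-e^{\tau\lambda_n}|\ \ge\ |z|-e^{\tau\re\lambda_n}\ \ge\ 1-e^{\tau\re\lambda_n}.
\]
It then remains to bound $\frac{1}{(1-e^{\tau\re\lambda_n})\,|\lambda_n|^{\alpha}}$ uniformly in $n\ge N_{\rm b}$, and here I would rerun the three-case analysis from the proof of Lemma~\ref{lem:frac_lam_bound} with $\widetilde\alpha:=\alpha$: (i) if $\tau\re\lambda_n\le-1$ then $1-e^{\tau\re\lambda_n}\ge1-e^{-1}$ and $|\lambda_n|\ge\kappa$; (ii) if $\tau\re\lambda_n>-1$ and $\re\lambda_n\le-\omega$, the mean value theorem gives $1-e^{\tau\re\lambda_n}\ge\tau|\re\lambda_n|e^{-1}\ge\tau\omega e^{-1}$ while $|\lambda_n|\ge\kappa$; (iii) if $\tau\re\lambda_n>-1$ and $\lambda_n\in\Omega_{\alpha,\Upsilon}$, the same estimate together with $|\re\lambda_n|\ge\Upsilon/|\im\lambda_n|^{\alpha}$ and $|\lambda_n|^{\alpha}\ge|\im\lambda_n|^{\alpha}$ yields $(1-e^{\tau\re\lambda_n})\,|\lambda_n|^{\alpha}\ge\tau e^{-1}|\re\lambda_n|\,|\lambda_n|^{\alpha}\ge\tau e^{-1}\Upsilon$. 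Setting $\Upsilon_0$ equal to the maximum of $\frac{1}{c_1\kappa^{\alpha}}$ and of the reciprocals of the lower bounds obtained in cases (i)--(iii) then gives \eqref{eq:Up_zero}; note that this $\Upsilon_0$ depends on $\tau$ both through $c_1$ and through the factors $\tau^{-1}$, in accordance with the remark preceding the lemma.

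I do not anticipate any genuine obstacle: this statement is essentially a restriction-to-the-annulus variant of Lemma~\ref{lem:frac_lam_bound}, with $\bigl|\tfrac{1-e^{\tau\lambda_n}}{z-e^{\tau\lambda_n}}\bigr|$ replaced by $\tfrac{1}{|z-e^{\tau\lambda_n}|}$ and $\tfrac1{|\lambda_n|}$ by $\tfrac1{|\lambda_n|^{\alpha}}$. The only two points needing a line of care are that $\re\lambda_n<0$ for $n\ge N_{\rm b}$ (so that $e^{\tau\re\lambda_n}<1$ and the denominator bound $1-e^{\tau\re\lambda_n}$ is positive), and that the finitely many exceptional eigenvalues must be controlled through \eqref{eq:unstable_eig_bound} rather than through the asymptotic geometry, since they may lie arbitrarily close to $i\mathbb{R}$.
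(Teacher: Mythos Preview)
Your proposal is correct and follows essentially the same route as the paper's proof: split into the finite block $1\le n\le N_{\rm b}-1$ handled by \eqref{eq:unstable_eig_bound} and the tail $n\ge N_{\rm b}$, reduce the tail to bounding $\frac{1}{(1-e^{\tau\re\lambda_n})|\lambda_n|^{\alpha}}$ via $|z-e^{\tau\lambda_n}|\ge 1-e^{\tau\re\lambda_n}$, and then repeat the three-case analysis from Lemma~\ref{lem:frac_lam_bound} to arrive at exactly the same constant $\Upsilon_0=\max\bigl\{\tfrac{1}{c_1\kappa^{\alpha}},\tfrac{1}{(1-e^{-1})\kappa^{\alpha}},\tfrac{e}{\tau\omega\kappa^{\alpha}},\tfrac{e}{\tau\Upsilon}\bigr\}$. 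Your observation about the $\tau$-dependence of $\Upsilon_0$ also matches the paper's remark preceding the lemma.
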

\begin{proof}
	If (A\ref{assump:finite_unstable}) and (A\ref{assump:imaginary}) hold, then
	we have a constant $\kappa >0$ satisfying $|\lambda_n| \geq \kappa$ 
	for all $n \in \mathbb{N}$.
	Since $r_1> 1$ and  $c_1>0$ are chosen so that 
	\eqref{eq:unstable_eig_bound} holds, it follows that
	\[
	\frac{1}{|z-e^{\tau \lambda_n} |\, |\lambda_n|^{\alpha}}
	\leq \frac{1}{c_1 \kappa^{\alpha}}
	\]
	for all $z \in \mathbb{D}_{r_1} \cap \mathbb{E}_1$
	and $1 \leq n \leq N_{\rm b} - 1$.
	Let $n \geq N_{\rm b}$ and $z \in  \mathbb{E}_1$. 
	We consider the following three cases:
	(i)~$\tau \re \lambda_n \leq -1$; (ii)~$\tau \re \lambda_n > -1$
	and $\re\lambda_n \leq -\omega$; and (iii)~$\tau \re \lambda_n > -1$
	and $\lambda_n \in \Omega_{\alpha,\Upsilon}$,
	as in the proof of 
	Lemma~\ref{lem:frac_lam_bound}.
	Moreover,
	we use the estimate
	\begin{align*}
		\frac{1}{|z-e^{\tau \lambda_n} |\, |\lambda_n|^{\alpha}}
		\leq 
		\frac{1}{(1-e^{\tau \re \lambda_n} )|\lambda_n|^{\alpha}}
		=
		\left|
		\frac{1}{\frac{1-e^{\tau \re \lambda_n}}{\tau \re \lambda_n}}\right| 
		\frac{1}{\tau |\re \lambda_n|\,  |\lambda_n|^{\alpha}}.
	\end{align*}

	In the case~(i) $\tau \re \lambda_n \leq -1$, we have  that 
	\[
	\frac{1}{|1-e^{\tau \re \lambda_n} |\, |\lambda_n|^{\alpha}} \leq \frac{1}{(1-e^{-1})\kappa^\alpha}.
	\]
	We next consider the case~(ii) $\tau \re \lambda_n > -1$
	and $\re\lambda_n \leq -\omega$.
	The mean value theorem for the function $t \mapsto e^t$ on $[-1,0]$
	shows that
	\[
	\frac{1 - e^t}{|t|} \geq e^{-1}
	\]
	for all $t \in [-1,0]$. Substituting $t = \tau \re \lambda_n$, 
	we obtain
	\begin{equation}
		\label{eq:dis_bount2_again}
		\frac{1 - e^{\tau \re \lambda_n}}{\tau |\re \lambda_n|} \geq e^{-1}.
	\end{equation}
	Hence
	\[
	\left|
	\frac{1}{\frac{1-e^{\tau \re \lambda_n}}{\tau \re \lambda_n}}\right| \,
	\frac{1}{\tau |\re \lambda_n|\, |\lambda_n|^{\alpha}} \leq 
	\frac{e}{\tau \omega \kappa^{\alpha}}.
	\]	
	Finally, we study the case~(iii) $\tau \re \lambda_n > -1$
	and $\lambda_n \in \Omega_{\alpha,\Upsilon}$.
	Note that the estimate \eqref{eq:dis_bount2_again} holds also
	in the case~(iii).
	Since $\lambda_n \in \Omega_{\alpha,\Upsilon}$ implies
	\[
	\frac{1}{|\re \lambda_n|} \leq \frac{|\im \lambda_n|^{\alpha}}{\Upsilon},
	\]
	we obtain
	\begin{align*}
		\left|
		\frac{1}{\frac{1-e^{\tau \re \lambda_n}}{\tau \re \lambda_n}}\right| \,
		\frac{1}{\tau |\re \lambda_n|\, |\lambda_n|^{\alpha}} \leq 
		\frac{e}{\tau \Upsilon} \cdot \frac{|\im \lambda_n|^{\alpha}}{|\lambda_n|^{\alpha}} 
		\leq \frac{e}{\tau \Upsilon} .
	\end{align*}
	
	Define
	a constant $\Upsilon_0>0$ by
	\[
	\Upsilon_0 \coloneqq 
	\max\left\{
	\frac{1}{c_1 \kappa^{\alpha}},~
	\frac{1}{(1-e^{-1})\kappa^\alpha},~\frac{e}{\tau \omega 
		\kappa^{\alpha}},~\frac{e}{\tau \Upsilon}
	\right\}.
	\]	
	Then \eqref{eq:Up_zero} holds for all $z \in \mathbb{D}_{r_1} \cap \mathbb{E}_1$ and 
	$n  \in \mathbb{N}$.
\end{proof}

We are now in a position to examine the case $\gamma  \geq \alpha$.
\begin{lmm}
	\label{lem:b_zero}
	Suppose that 
	{\rm (A\ref{assump:finite_unstable})} and 
	{\rm (A\ref{assump:imaginary})} hold. Let
	$b \in X$ and $f \in \mathcal{D}_*^{\gamma}$ for some $\gamma \geq \alpha$.
	Then
	\eqref{eq:RSFR_estimate}  holds for a fixed $\tau >0$.
	Moreover, if $0< \delta \leq \min \{1,\,\alpha/2\}$, then 
	\eqref{eq:RSFR_estimate_poly_decay}
	holds for a fixed $\tau >0$.
\end{lmm}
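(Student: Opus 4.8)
The plan is to adapt the proof of Lemma~\ref{lem:g_zero}, interchanging the roles of $b$ and $f$ and of $T(\tau)$ and $T(\tau)^*$; the one genuine difference is that with only $b\in X$ the operator $R(z,T(\tau))S(\tau)$ is no longer uniformly bounded on an annulus, so in the polynomial-decay statement the $\delta$-th order decay must be carried by that factor. Set $b_\tau:=S(\tau)1=\int_0^\tau T(s)b\,ds$, so that $\|R(z,T(\tau))S(\tau)\|=\|R(z,T(\tau))b_\tau\|$ for $z\in\rho(T(\tau))$; note $b_\tau\in X$, and in fact $b_\tau\in D(A)$ since $\int_0^\tau T(s)b\,ds\in D(A)$ for every $b\in X$. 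Fix $\tau>0$; let $N_{\rm b}$ be as in \eqref{eq:lambda_cond_large}, let $r_1>1$, $c_1>0$ be as in \eqref{eq:unstable_eig_bound}, and let $\kappa>0$ satisfy $|\lambda_n|\ge\kappa$ for all $n$ (possible by {\rm (A\ref{assump:finite_unstable})} and {\rm (A\ref{assump:imaginary})}).

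For \eqref{eq:RSFR_estimate}, the first step is to show $\sup_{z\in\mathbb{D}_{r_1}\cap\mathbb{E}_1}\|FR(z,T(\tau))\|<\infty$. Since $\|FR(z,T(\tau))\|=\|R(\overline z,T(\tau)^*)f\|$, I would decompose $f=f^++f^-$ with $f^\pm\in X^\pm_*$: the finite-dimensional part from $f^+$ and the finitely many terms $N_{\rm a}\le n\le N_{\rm b}-1$ of $R(\overline z,T(\tau)^*)f^-=\sum_{n\ge N_{\rm a}}(\overline z-e^{\tau\overline{\lambda_n}})^{-1}\langle f,\phi_n\rangle\psi_n$ are bounded using $|\overline z-e^{\tau\overline{\lambda_n}}|=|z-e^{\tau\lambda_n}|\ge c_1$ and $|\lambda_n|\ge\kappa$, while for the tail $n\ge N_{\rm b}$ Lemma~\ref{lem:z_lambda_bound} applied to $A^*$ gives $|\overline z-e^{\tau\overline{\lambda_n}}|^{-2}\le\Upsilon_0^2|\lambda_n|^{2\alpha}$, so the tail is dominated by $\Upsilon_0^2\sum_{n\ge N_{\rm b}}|\lambda_n|^{2\alpha}|\langle f,\phi_n\rangle|^2<\infty$ (using $f\in\mathcal{D}_*^\gamma\subset\mathcal{D}_*^\alpha$, since $\gamma\ge\alpha$ and $|\lambda_n|\ge\kappa$). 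Given this uniform bound, pulling $\sup_z\|FR(z,T(\tau))\|^2$ out of the integral in \eqref{eq:RSFR_estimate} and applying \eqref{eq:Resol_T_inta} of Lemma~\ref{lem:Resol_T_int}.a) to $b_\tau\in X$ finishes \eqref{eq:RSFR_estimate}.

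For \eqref{eq:RSFR_estimate_poly_decay} with $0<\delta\le\min\{1,\alpha/2\}$, the key use of $\delta\le1$ is that then $b_\tau\in D(A)\subset\mathcal{D}^\delta$, so Lemma~\ref{lem:Resol_T_int}.b) (valid since $\delta/\alpha\le1/2$) gives $\lim_{r\downarrow1}\Lambda_{\delta/\alpha}(r)\int_0^{2\pi}\|R(re^{i\theta},T(\tau))S(\tau)\|^2\,d\theta=0$. It then suffices to bound $\|F^+R(z,T^+(\tau))\|$ and $\|F^-R(z,T^-(\tau))(-A^-)^{-\delta}\|$ uniformly on $\mathbb{D}_{r_1}\cap\mathbb{E}_1$ and factor those suprema out of the two integrals in \eqref{eq:RSFR_estimate_poly_decay}. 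By Lemma~\ref{lem:F_adjoint} these are at most $\|R(\overline z,T(\tau)^*)f^+\|$ and $\|R(\overline z,T(\tau)^*)(-A^-_*)^{-\delta}f^-\|$; the first is bounded as in the $f^+$ step above, and the second by writing $R(\overline z,T(\tau)^*)(-A^-_*)^{-\delta}f^-=\sum_{n\ge N_{\rm a}}(-\overline{\lambda_n})^{-\delta}(\overline z-e^{\tau\overline{\lambda_n}})^{-1}\langle f,\phi_n\rangle\psi_n$ and using $|\lambda_n|^{-2\delta}|\overline z-e^{\tau\overline{\lambda_n}}|^{-2}\le\Upsilon_0^2|\lambda_n|^{2\alpha-2\delta}$ for $n\ge N_{\rm b}$ (Lemma~\ref{lem:z_lambda_bound} for $A^*$ again), the tail $\sum_{n\ge N_{\rm b}}|\lambda_n|^{2(\alpha-\delta)}|\langle f,\phi_n\rangle|^2$ being finite because $\alpha-\delta<\alpha\le\gamma$ so that $f\in\mathcal{D}_*^\gamma\subset\mathcal{D}_*^{\alpha-\delta}$, and the finitely many remaining terms being controlled by $|\overline z-e^{\tau\overline{\lambda_n}}|\ge c_1$ and $|\lambda_n|\ge\kappa$.

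The main obstacle is the bookkeeping in this polynomial-decay step: since $b\in X$ provides no smoothing, the decay has to be routed through $\|R(z,T(\tau))S(\tau)\|=\|R(z,T(\tau))b_\tau\|$, which works precisely because $b_\tau\in D(A)\subset\mathcal{D}^\delta$ when $\delta\le1$ — this is why the hypothesis $\delta\le1$ appears here but did not in Lemma~\ref{lem:g_zero}. The only other point to verify is that Lemma~\ref{lem:z_lambda_bound} is available for $A^*$, which holds with the same constants and the same $N_{\rm b}$ because $\mathbb{C}_{-\omega}$, $\Omega_{\alpha,\Upsilon}$ and $i\mathbb{R}$ are invariant under complex conjugation (so $A^*$ satisfies {\rm (A\ref{assump:finite_unstable})} and {\rm (A\ref{assump:imaginary})} with the same parameters) and $z\mapsto\overline z$ maps $\mathbb{D}_{r_1}\cap\mathbb{E}_1$ onto itself, so \eqref{eq:unstable_eig_bound} transfers verbatim.
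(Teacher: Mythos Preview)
Your proposal is correct and follows essentially the same approach as the paper: establish the uniform bound $\sup_{z\in\mathbb{D}_{r_1}\cap\mathbb{E}_1}\|FR(z,T(\tau))\|<\infty$ from $f\in\mathcal{D}_*^\gamma$ with $\gamma\ge\alpha$ via Lemma~\ref{lem:z_lambda_bound}, then let the decay come from $\|R(z,T(\tau))b_\tau\|$ through Lemma~\ref{lem:Resol_T_int}, using $b_\tau\in D(A)\subset\mathcal{D}^\delta$ when $\delta\le1$. The paper streamlines in two places: it does not decompose $f=f^++f^-$ for the uniform bound (Lemma~\ref{lem:z_lambda_bound} already covers all $n\in\mathbb{N}$, so the splitting is unnecessary), and for \eqref{eq:RSFR_estimate_poly_decay} it bypasses Lemma~\ref{lem:F_adjoint} and the second series computation by simply noting $\|F^+R(z,T^+(\tau))\|\le\|FR(z,T(\tau))\|$ and $\|F^-R(z,T^-(\tau))(-A^-)^{-\delta}\|\le\|FR(z,T(\tau))\|\,\|(-A^-)^{-\delta}\|$, which immediately reduces both to the uniform bound already established.
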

\begin{proof}
	Take $\tau >0$ arbitrarily, and define
	\[
	b_0\coloneqq \int^\tau_0 T(s) bds.
	\]
	Lemma~\ref{lem:Resol_T_int}.a) implies that 
	\[
	\lim_{r \downarrow 1} ~(r-1)
	\int^{2\pi}_0 \big\|R\big(re^{i\theta}, T(\tau)\big) b_0\big\|^2  d\theta  = 0.
	\]
	To obtain \eqref{eq:RSFR_estimate},
	it suffices to show that 
	\begin{equation}
		\label{eq:beta_zero_bounded}
		\sup_{z \in \mathbb{D}_{r_1} \cap \mathbb{E}_{1}} \big\|FR\big(z, T(\tau)\big)\big\| < \infty.
	\end{equation}
	
	Under (A\ref{assump:finite_unstable}) and (A\ref{assump:imaginary}),
	there exists a constant $\kappa>0$ such that $|\lambda_n| \geq \kappa$ 
	for all $n \in \mathbb{N}$. 
	Hence
	\begin{align*}
		\big\|FR\big(z, T(\tau)\big)\big\|^2 &= 
		\big\|R\big(\overline{z}, T(\tau)^*\big)f\big\|^2 \\
		&\leq 
		\frac{1}{M_{\rm a} } \sum_{n=1}^\infty 
		\left|
		\frac{\langle f, \phi_n \rangle }{\overline{z} - e^{\tau \overline{\lambda_n}}}
		\right|^2 \\
		&\leq 
		\frac{1}{M_{\rm a}\kappa^{2(\gamma-\alpha)}}
		\sum_{n=1}^\infty 
		\frac{|\lambda_n|^{2\gamma}\,  |\langle f, \phi_n \rangle|^2}{|z - e^{\tau \lambda_n}|^2 \, |\lambda_n|^{2\alpha}}
	\end{align*}
	for all $z \in \mathbb{D}_{r_1} \cap \mathbb{E}_{1}$.
	By Lemma~\ref{lem:z_lambda_bound}, there exists
	a constant $\Upsilon_0 >0$ such that 
	for all $z \in \mathbb{D}_{r_1} \cap \mathbb{E}_{1}$,
	\begin{align*}
		\sum_{n=1}^\infty 
		\frac{|\lambda_n|^{2\gamma}\,  |\langle f, \phi_n \rangle|^2}{|z - e^{\tau \lambda_n}|^2 \, |\lambda_n|^{2\alpha}}&\leq 
		\Upsilon_0^2
		\sum_{n=1}^\infty |\lambda_n|^{2\gamma}\,  |\langle f, \phi_n \rangle|^2.
	\end{align*}
	Therefore, 
	we obtain \eqref{eq:beta_zero_bounded} from 
	$f \in \mathcal{D}_*^{\gamma}$.
	
	Let $0< \delta \leq \min \{1,\,\alpha/2\}$.
	Then
	$b_0 \in D(A) \subset \mathcal{D}^{\delta}$.
	Lemma~\ref{lem:Resol_T_int}.b) shows that 
	\[
	\lim_{r \downarrow 1} \Lambda_{\delta/\alpha}(r)
	\int^{2\pi}_0 
	\big\|R\big(re^{i\theta}, T(\tau)\big) b_0\big\|^2  d\theta  = 0.
	\]
	Moreover, 
	for all $z \in \rho(T(\tau))$,
	\begin{align*}
		\big\|F^+R\big(z, T^+(\tau)\big) \big\| &= 
		\Big\|FR\big(z, T(\tau)\big)\big|_{X^+}\Big\| \leq 
		\big\|FR\big(z, T(\tau)\big)\big\|
	\end{align*}
	and
	\begin{align*}
		\big\|F^-R\big(z, T^-(\tau)\big)(-A^-)^{-\delta}\big\|
		&\leq 
		\Big\|FR\big(z, T(\tau)\big)\big|_{X^-} \Big\| 
		\, \big\| (-A^-)^{-\delta}\big\|
		\leq \big\|FR\big(z, T(\tau)\big)\big\| \,
		\big\| (-A^-)^{-\delta}\big\|.
	\end{align*}
	Combining these estimates with \eqref{eq:beta_zero_bounded}
	yields
	\eqref{eq:RSFR_estimate_poly_decay}.
\end{proof}

\subsubsection{Case $\beta,\gamma < \alpha$}
Finally, we consider the case  $\beta,\gamma < \alpha$.
For this case, we use the following
simplified version of the moment inequality. We refer  to
Proposition~6.6.4 of
\cite{Haase2006} and Theorem~II.5.34 of \cite{Engel2000}
for the proof of the moment inequality.
\begin{prpstn}
	\label{prop:moment_inequality}
	Let $A$ be the generator of a uniformly bounded $C_0$-semigroup
	on a Banach space $X$ such that $0 \in \rho(A)$.
	Let $0 < \beta < \alpha$.
	Then there exists a constant $\varsigma >0$ such that
	\[
	\|(-A)^{-\beta} x\| \leq \varsigma \|x\|^{1-\beta/\alpha} \,
	\|(-A)^{-\alpha} x\|^{\beta/\alpha}
	\]
	for all $x \in X$.
\end{prpstn}

In the case $\beta,\gamma < \alpha$,
we prove \eqref{eq:RSFR_estimate} and \eqref{eq:RSFR_estimate_poly_decay}
separately.
\begin{lmm}
	\label{lem:bg_posi1}
	Suppose that 
	{\rm (A\ref{assump:finite_unstable})} and 
	{\rm (A\ref{assump:imaginary})}
	hold. Let $0 \leq \beta,\gamma < \alpha $
	and $\beta + \gamma \geq \alpha$. If $b \in \mathcal{D}^{\beta}$
	and $f \in \mathcal{D}_*^{\gamma}$,
	then
	\eqref{eq:RSFR_estimate}  holds for a fixed $\tau >0$.
\end{lmm}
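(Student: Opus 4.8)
The plan is to follow the three-case scheme used in the proof of Lemma~19 of \cite{Paunonen2014JDE}. The cases $\beta\ge\alpha$ and $\gamma\ge\alpha$ having been settled in Lemmas~\ref{lem:g_zero} and~\ref{lem:b_zero}, it remains to treat $0\le\beta,\gamma<\alpha$ with $\beta+\gamma\ge\alpha$; note that these constraints force $\beta,\gamma>0$. Fix $\tau>0$. Using the spectral decomposition of Section~\ref{sec:Spec_decomp} and $S(\tau)1=S^{+}(\tau)1+S^{-}(\tau)1$ with $S^{+}(\tau):=\Pi S(\tau)$ and $S^{-}(\tau):=(I-\Pi)S(\tau)$, the rank-one operator $R(z,T(\tau))S(\tau)FR(z,T(\tau))$ breaks into four pieces according to $X=X^{+}\oplus X^{-}$. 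Each of the three pieces that involve the finite-dimensional part has a factor that is uniformly bounded for $z\in\overline{\mathbb E_1}$ with $|z|$ near $1$, since the eigenvalues $e^{\tau\lambda_n}$ with $\re\lambda_n>0$ stay at positive distance from $\overline{\mathbb E_1}$; the remaining factor is $\|R(z,T^{-}(\tau))S^{-}(\tau)1\|^{2}=\|R(z,T(\tau))S^{-}(\tau)1\|^{2}$ or $\|F^{-}R(z,T^{-}(\tau))\|^{2}=\|R(\bar z,T(\tau)^{*})f^{-}\|^{2}$, so these three contributions vanish after multiplication by $r-1$ and integration, either trivially or by Lemma~\ref{lem:Resol_T_int}.a). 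Hence it suffices to prove
\[
\lim_{r\downarrow 1}\,(r-1)\int_0^{2\pi}\big\|R\big(re^{i\theta},T^{-}(\tau)\big)S^{-}(\tau)1\big\|^{2}\,\big\|F^{-}R\big(re^{i\theta},T^{-}(\tau)\big)\big\|^{2}\,d\theta=0 .
\]

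Two ingredients drive the rest. First, $v:=S^{-}(\tau)1=\sum_{n\ge N_{\rm a}}\frac{e^{\tau\lambda_n}-1}{\lambda_n}\langle b,\psi_n\rangle\phi_n$ belongs to $D\big((-A^{-})^{\beta+1}\big)$, because $|e^{\tau\lambda_n}-1|\le 2$ for $n\ge N_{\rm a}$ and $b\in\mathcal D^{\beta}$, while $f^{-}\in D\big((-A^{-}_{*})^{\gamma}\big)$ since $f\in\mathcal D^{\gamma}_{*}$. Second, for every $\sigma\in[0,\alpha]$ there is $C_\tau>0$ with
\[
\big\|R\big(z,T^{-}(\tau)\big)(-A^{-})^{-\sigma}\big\|+\big\|R\big(\bar z,T^{-}_{*}(\tau)\big)(-A^{-}_{*})^{-\sigma}\big\|\le C_\tau\,(r-1)^{-(\alpha-\sigma)/\alpha}\quad\text{for }z\in\mathbb D_{r_1}\cap\mathbb E_1,
\]
obtained by writing $\frac{|\lambda_n|^{-\sigma}}{|z-e^{\tau\lambda_n}|}=\big(\tfrac{1}{|z-e^{\tau\lambda_n}|\,|\lambda_n|^{\alpha}}\big)^{\sigma/\alpha}\big(\tfrac{1}{|z-e^{\tau\lambda_n}|}\big)^{1-\sigma/\alpha}$, estimating the first factor by Lemma~\ref{lem:z_lambda_bound} (and its adjoint form) and the second by $\tfrac{1}{r-1}$ (valid because $\re\lambda_n<0$ for $n\ge N_{\rm a}$ gives $|z-e^{\tau\lambda_n}|\ge r-1$), and invoking the Riesz-basis norm equivalences. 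Put $\mu:=\min\{\beta+1,\gamma\}$ and $\nu:=\max\{\beta+1,\gamma\}$, so $0<\mu\le\nu$ and $\mu+\nu=\beta+1+\gamma\ge\alpha+1$.

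If $\nu\ge\alpha$ (so $\nu=\beta+1$ and $\beta\ge\alpha-1$), then $v\in D\big((-A^{-})^{\alpha}\big)$ and the estimate with $\sigma=\alpha$ gives $\|R(z,T^{-}(\tau))v\|\le C_\tau\|(-A^{-})^{\alpha}v\|$ uniformly, so the quantity above is at most a constant times $(r-1)\int_0^{2\pi}\|F^{-}R(re^{i\theta},T^{-}(\tau))\|^{2}d\theta\to0$ by Lemma~\ref{lem:Resol_T_int}.a). If $\nu<\alpha$, I bound the factor carrying the larger smoothness $\nu$ uniformly by the estimate with $\sigma=\nu$, paying $(r-1)^{-2(\alpha-\nu)/\alpha}$, and apply the identity $\frac{1}{2\pi}\int_0^{2\pi}\|R(re^{i\theta},\Delta)w\|^{2}d\theta=\sum_{k\ge 0}r^{-2(k+1)}\|\Delta^{k}w\|^{2}$ from the proof of Proposition~\ref{prop:discrete_poly_decay}.a) to the other factor, where $w$ is $v$ or $f^{-}$ and carries smoothness $\mu$. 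Since $(T^{-}(t))_{t\ge 0}$ and $(T^{-}_{*}(t))_{t\ge 0}$ are polynomially stable with parameter $\alpha$ by Lemma~\ref{lem:T_minus_poly_stable}, statement c) of Theorem~\ref{thm:decay_charac} gives $\|\Delta^{k}w\|=o(k^{-\mu/\alpha})$, whence, exactly as in the computations in the proof of Proposition~\ref{prop:discrete_poly_decay}.a), $\sum_{k\ge 0}r^{-2(k+1)}\|\Delta^{k}w\|^{2}$ is $o\big((r-1)^{2\mu/\alpha-1}\big)$ if $\mu<\alpha/2$, $o\big(|\log(r-1)|\big)$ if $\mu=\alpha/2$, and $O(1)$ if $\mu>\alpha/2$. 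Multiplying, the quantity in question is $o\big((r-1)^{2(\mu+\nu-\alpha)/\alpha}\big)$ when $\mu<\alpha/2$, and $(r-1)^{(2\nu-\alpha)/\alpha}$ times $o(|\log(r-1)|)$ or $O(1)$ when $\mu\ge\alpha/2$; since $\mu+\nu\ge\alpha+1$ we have $\mu+\nu-\alpha\ge1$ and, when $\mu\ge\alpha/2$, also $\nu>\alpha/2$, so the exponent of $r-1$ is strictly positive in each sub-case and the limit is $0$. This proves \eqref{eq:RSFR_estimate}.

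The delicate point is that $b\in\mathcal D^{\beta}$ and $f\in\mathcal D^{\gamma}_{*}$ only give $\beta+\gamma\ge\alpha$, possibly with equality, so without the order of smoothing gained by integrating the semigroup in $S^{-}(\tau)$ one would land on the critical exponent $0$ and be left with an uncancelled logarithm; the gain $S^{-}(\tau)1\in D((-A^{-})^{\beta+1})$ upgrades this to the \emph{strict} inequality $\mu+\nu\ge\alpha+1$, and using the $o$-version (statement c)) of Theorem~\ref{thm:decay_charac} rather than the $O$-versions turns the near-critical cases into a genuine $o(1)$. The only real bookkeeping is the split $\mu<\alpha/2$, $\mu=\alpha/2$, $\mu>\alpha/2$ (with the check $\nu>\alpha/2$ in the latter two); everything else is routine.
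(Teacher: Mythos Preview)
Your argument is correct, but the route is genuinely different from the paper's. The paper never exploits the extra order of smoothing contained in $S^{-}(\tau)1$; instead it fixes $\beta_1\le\beta$, $\gamma_1\le\gamma$ with $\beta_1+\gamma_1=\alpha$, applies the \emph{moment inequality} to write
\[
\big\|R\big(z,T^{-}(\tau)\big)S^{-}(\tau)1\big\|\le\text{const}\cdot\big\|R\big(z,T(\tau)\big)b_1\big\|^{1-\beta_1/\alpha},\qquad
\big\|F^{-}R\big(z,T^{-}(\tau)\big)\big\|\le\text{const}\cdot\big\|R\big(\bar z,T(\tau)^{*}\big)f_1\big\|^{1-\gamma_1/\alpha},
\]
and then uses H\"older with the conjugate exponents $p=(1-\beta_1/\alpha)^{-1}$, $q=(1-\gamma_1/\alpha)^{-1}$ so that each resulting integral is handled directly by Lemma~\ref{lem:Resol_T_int}.a). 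Your approach replaces the moment inequality/H\"older pair by two more elementary ingredients: the observation $S^{-}(\tau)1\in D\big((-A^{-})^{\beta+1}\big)$ and the interpolated resolvent bound obtained from Lemma~\ref{lem:z_lambda_bound} together with $|z-e^{\tau\lambda_n}|\ge r-1$. The upshot is that the paper's proof works verbatim even when $\beta+\gamma=\alpha$ without the ``$+1$'' from integrating the semigroup, whereas your argument genuinely needs that extra order (as you note) to push the exponent of $r-1$ strictly positive. Conversely, your route avoids the moment inequality and the somewhat artificial choice of $\beta_1,\gamma_1$, and it makes transparent where the polynomial decay enters via Proposition~\ref{prop:discrete_poly_decay}.

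Two small slips worth fixing: the unstable eigenvalues $e^{\tau\lambda_n}$ with $\re\lambda_n>0$ lie \emph{inside} $\overline{\mathbb E_1}$, not at positive distance from it; what you need (and what \eqref{eq:unstable_eig_bound} records) is that they stay at positive distance from the thin annulus $\mathbb D_{r_1}\cap\mathbb E_1$. Also, the $o$-decay statement you invoke from Theorem~\ref{thm:decay_charac} is item~b), not~c).
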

\begin{proof}
	By assumption, we obtain $\beta ,\gamma >0$.
	There exist
	$0 < \beta_1 \leq \beta$ and $0<\gamma_1 \leq \gamma$
	such that $\beta_1+\gamma_1 =\alpha$.
	Since $b \in \mathcal{D}^{\beta}$
	and $f \in \mathcal{D}_*^{\gamma}$,
	we obtain $b^- \in D((-A^-)^{\beta_1})$ and 
	$f^- \in D((-A_*^-)^{\gamma_1})$.

	Take $\tau >0$ and $z \in \mathbb{D}_{r_1} \cap \mathbb{E}_1 \subset 
	\rho (T(\tau))$, where $r_1> 1$ is chosen so that 
	\eqref{eq:unstable_eig_bound} holds for some $c_1 >0$.
	Define 
	\begin{equation}
		\label{eq:b1_def}
		b_1 \coloneqq \int^\tau_0 T(s)(-A^-)^{\beta_1} b^-ds \in X^-.
	\end{equation}
	Since the resolvent
	$R(z, T^-(\tau))$ and the operator on $X^-$
	\[
	x^- \mapsto
	\int^{\tau}_0 T^-(s) x^-ds 
	\] 
	commute with $(-A^-)^{\beta_1}$
	by Proposition~3.1.1.f) of \cite{Haase2006}, 	it follows that 
	\begin{equation}
		\label{eq:Rb1}
		(-A^-)^{\beta_1} R\big(z, T^-(\tau)\big) \int^{\tau}_0 T^-(s) b^- ds =
		R\big(z, T^-(\tau)\big) b_1.
	\end{equation}
	By the moment inequality given in Proposition~\ref{prop:moment_inequality}, 
	there exists $\varsigma_1>0$ such that 
	\[
	\|(-A^-)^{-\beta_1}x^-\| \leq \varsigma_1 \|x^-\|^{1-\beta_1/\alpha} 
	\, \|(-A^-)^{-\alpha} x^-\|^{\beta_1/\alpha}
	\]
	for all $x^- \in X^-$.
	Applying this inequality to $x^- = R(z, T^-(\tau)) b_1$,
	we have from \eqref{eq:Rb1} that 
	\begin{align*}
		\left\|R\big(z, T(\tau)\big) \int^{\tau}_0 T(s) b^- ds\right\|
		&=
		\left\| (-A^-)^{-\beta_1}
		R\big(z, T^{-}(\tau)\big)  b_1
		\right\| \\
		&\leq 
		\varsigma_1 \left\|
		R\big(z, T(\tau)\big)  b_1
		\right\|^{1-\beta_1/\alpha}  \,
		\left\|
		(-A^-)^{-\alpha} R\big(z, T^-(\tau)\big)  b_1
		\right\|^{\beta_1/\alpha}.
	\end{align*}
	Hence
	\begin{align}
		\big\|R\big(z, T(\tau)\big) S(\tau)\big\|
		&\leq 
		\left\|R\big(z, T(\tau) \big) \int^{\tau}_0 T(s) b^+ds
		\right\| 
		+ \left\|R\big(z, T(\tau)\big) \int^{\tau}_0 T(s) b^- ds\right\| \notag \\
		&\leq 	\left\|R\big(z, T(\tau)\big) \int^{\tau}_0 T(s) b^+ds
		\right\| + \varsigma_1 \left\|
		R\big(z, T(\tau)\big)  b_1
		\right\|^{1-\beta_1/\alpha}  \,
		\left\|
		(-A^-)^{-\alpha} R\big(z, T^-(\tau)\big)  b_1
		\right\|^{\beta_1/\alpha} .
		\label{eq:RS_bound}
	\end{align}
	Using Lemma~\ref{lem:z_lambda_bound}, we obtain
	\begin{align*}
		\left\|
		(-A^-)^{-\alpha} R\big(z, T^-(\tau)\big)  b_1
		\right\|^2 &\leq M_{\rm b} 
		\sum_{n=N_{\rm a}}^{\infty} 
		\frac{|\langle b_1, \psi_n \rangle |^2}
		{|z- e^{\tau \lambda_n}|^2 \, |\lambda_n|^{2\alpha}}\\
		&\leq \frac{M_{\rm b} }{M_{\rm a}} \Upsilon_0^2 \|b_1\|^2
	\end{align*}
	for some $\Upsilon_0 >0$.
	Therefore,
	\begin{equation}
		\label{eq:A_inv_alpha_b}
		\left\|
		(-A^-)^{-\alpha} R\big(z, T^-(\tau)\big)  b_1
		\right\|^{\beta_1/\alpha} \leq  
		\left(
		\sqrt{\frac{M_{\rm b} }{M_{\rm a}}} \Upsilon_0\|b_1\|
		\right)^{\beta_1/\alpha} \eqqcolon \varpi_1 .
	\end{equation}
	Combining the estimates \eqref{eq:RS_bound} and 
	\eqref{eq:A_inv_alpha_b}, we obtain
	\begin{equation}
		\label{eq:RS_bound_final}
		\big\|R\big(z, T(\tau)\big) S(\tau)\big\| \leq 
		\left\|R\big(z, T(\tau)\big) \int^{\tau}_0 T(s) b^+ds
		\right\|  + \varsigma_1\varpi_1 \left\|
		R\big(z, T(\tau)\big)  b_1
		\right\|^{1-\beta_1/\alpha}.
	\end{equation}
	
	Define $f_1 \coloneqq (-A^-_*)^{\gamma_1 }f^- \in X^-_*$.
	Then
	\[
	R\big(\overline{z}, T(\tau)^*\big)  f^- =
	(-A^-_*)^{-\gamma_1 } R\big(\overline{z}, T_*^{-}(\tau)\big)  f_1.
	\]
	By the moment inequality given in Proposition~\ref{prop:moment_inequality}, 
	there exists $\varsigma_2 >0$ such that
	\[
	\left\|R\big(\overline{z}, T(\tau)^*\big)  f^- \right\| \\
	\leq
	\varsigma_2 \left\|R\big(\overline{z}, T(\tau)^*\big) f_1 \right\|^{1-\gamma_1/\alpha} \, 
	\big\|(-A^-_*)^{-\alpha} R\big(\overline{z}, T^-_*(\tau)\big) f_1\big\|^{\gamma_1/\alpha}.
	\]
	Then
	\begin{align*}
		\big\|F R\big(z, T(\tau)\big)\big\| 
		&\leq 
		\left\|R\big(\overline{z}, T(\tau)^*\big)  f^+
		\right\| 
		+ \left\|R\big(\overline{z}, T(\tau)^*\big)  f^- \right\| \\
		&\leq \left\|R\big(\overline{z}, T(\tau)^*\big)  f^+
		\right\| + 
		\varsigma_2 \left\|R\big(\overline{z}, T(\tau)^*\big) f_1 \right\|^{1-\gamma_1/\alpha} \, 
		\big\|(-A^-_*)^{-\alpha} R\big(\overline{z}, T^-_*(\tau)\big) f_1\big\|^{\gamma_1/\alpha}.
	\end{align*}
	As in \eqref{eq:A_inv_alpha_b}, we obtain
	\[
	\big\|(-A^-_*)^{-\alpha} R\big(\overline{z}, T^-_*(\tau)\big) f_1\big\|^{\gamma_1/\alpha} \leq \left(
	\sqrt{\frac{M_{\rm b} }{M_{\rm a}}} \Upsilon_0\|f_1\|
	\right)^{\gamma_1/\alpha} \eqqcolon \varpi_2.
	\]
	Hence
	\begin{equation}
		\label{eq:FR_final}
		\big\|F R\big(z, T(\tau)\big)\big\| \leq
		\left\|R\big(\overline{z}, T(\tau)^*\big)  f^+
		\right\|  + 
		\varsigma_2\varpi_2 \left\|R\big(\overline{z}, T(\tau)^*\big) f_1 \right\|^{1-\gamma_1/\alpha}.
	\end{equation}
	
	Define 
	\[
	p \coloneqq \frac{1}{1 - \frac{\beta_1}{\alpha}},\quad 
	q \coloneqq  \frac{1}{1 - \frac{\gamma_1}{\alpha}}.
	\]
	Then we have from
	$\beta_1 + \gamma_1 = \alpha$ that
	$1/p + 1/q = 1$. 
	Since $(d_1+d_2)^2 \leq 2(d_1^2+d_2^2)$ for all $d_1,d_2 \geq 0$,
	it follows from the estimates 
	\eqref{eq:RS_bound_final} and \eqref{eq:FR_final} that 
	\begin{align*}
		\big\|R\big(z, T(\tau)\big) S(\tau)\big\|^2 \,  \big\|FR\big(z, T(\tau)\big)\big\|^2 &\leq 
		4 \left( \left\|R\big(z, T(\tau)\big) \int^{\tau}_0 T(s) b^+ds
		\right\|^2 +
		(\varsigma_1 \varpi_1)^2\left\|
		R\big(z, T(\tau)\big)  b_1
		\right\|^{2/p}  
		\right) \\
		&\qquad \qquad 
		\times \left( \left\|R\big(\overline{z}, T(\tau)^*\big)  f^+\right\|^2 + 
		(\varsigma_2 \varpi_2)^2 \left\|R\big(\overline{z}, T(\tau)^*\big) f_1 \right\|^{2/q}
		\right).
	\end{align*}
	By H\"older's inequality and Lemma~\ref{lem:Resol_T_int}.a), 
	\begin{align*}
		&\limsup_{r \downarrow 1} \,(r-1)
		\int^{2\pi}_0 \left\|
		R\big(re^{i\theta}, T(\tau)\big)  b_1
		\right\|^{2/p}  \,  \left\|R\big(re^{-i\theta}, T(\tau)^*\big) f_1 
		\right\|^{2/q} d\theta  \\
		&\quad \leq 
		\limsup_{r \downarrow 1} \,\left((r-1) 
		\int^{2\pi}_0 		\left\|
		R\big(re^{i\theta}, T(\tau)\big)  b_1
		\right\|^{2} d\theta \right)^{1/p} \cdot 
		\limsup_{r \downarrow 1} \,\left((r-1) 
		\int^{2\pi}_0 \left\|R\big(re^{i\theta}, T(\tau)^*\big) f_1 \right\|^{2} d\theta \right)^{1/q} \\
		&\quad = 0.
	\end{align*}
	Since the estimate \eqref{eq:unstable_eig_bound} yields
	\begin{align}
		\label{eqResol_x_plus}
		\left\|R\big(z, T(\tau)\big) x^+ \right\|^2 
		\leq M_{\rm b}
		\sum_{n = 1}^{N_{\rm a}-1} \frac{ 
			|\langle x^+,\psi_n\rangle |^2
		}{|z - e^{\tau \lambda_n }|^2} 
		\leq 
		\frac{M_{\rm b}}{M_{\rm a}} \cdot 
		\frac{\left\| x^+\right\|^2}{c_1^2}
	\end{align}
	for all $x^+ \in X^+$,
	it follows that 
	\[
	\limsup_{r \downarrow 1} \,
	(r-1) \int^{2\pi}_0 \left\|R\big(re^{i\theta}, T(\tau)\big) \int^{\tau}_0 T(s) b^+ds
	\right\|^{2p} d\theta= 0.
	\]
	Using H\"older's inequality  and Lemma~\ref{lem:Resol_T_int}.a) again,
	we obtain
	\begin{align*}
		&\limsup_{r \downarrow 1} \,(r-1)
		\int^{2\pi}_0 \left\|R\big(re^{i\theta}, T(\tau)\big) \int^{\tau}_0 T(s) b^+ds
		\right\|^2  \,  \big\|R\big(re^{-i\theta}, T(\tau)^*\big) f_1 \big\|^{2/q} d\theta \\
		&\quad \leq \limsup_{r \downarrow 1} \,
		\left((r-1) \int^{2\pi}_0 \left\|R\big(re^{i\theta}, T(\tau)\big) \int^{\tau}_0 T(s) b^+ds
		\right\|^{2p} d\theta
		\right)^{1/p} \\
		&\qquad \times \limsup_{r \downarrow 1} \,\left((r-1) 
		\int^{2\pi}_0 \big\|R\big(re^{i\theta}, T(\tau)^*\big) f_1 \big\|^{2} d\theta \right)^{1/q} \\
		&\quad =0.
	\end{align*}
	Similarly, 
	\begin{align*}
		\limsup_{r \downarrow 1} \,(r-1)
		\int^{2\pi}_0 \left\|
		R\big(re^{i\theta}, T(\tau)\big)  b_1
		\right\|^{2/p}  \, \big\|R\big(re^{-i\theta}, T(\tau)^*\big)  f^+\big\|^2 d\theta =0
	\end{align*}
	and
	\begin{align*}
		\limsup_{r \downarrow 1} \,(r-1)
		\int^{2\pi}_0 \left\|R\big(re^{i\theta}, T(\tau)\big) \int^{\tau}_0 T(s) b^+ds
		\right\|^2  \,  \big\|R\big(re^{-i\theta}, T(\tau)^*\big)  f^+\big\|^2 d\theta =0.
	\end{align*}
	Thus, the desired conclusion \eqref{eq:RSFR_estimate} is obtained.
\end{proof}
\begin{lmm}
	\label{lem:bg_posi2}
	Suppose that 
	{\rm (A\ref{assump:finite_unstable})} and 
	{\rm (A\ref{assump:imaginary})}
	hold. Let $b \in \mathcal{D}^{\beta}$
	and $f \in \mathcal{D}_*^{\gamma}$
	for some $0 \leq \beta,\gamma < \alpha $
	satisfying $\beta + \gamma \geq \alpha$. 
	If $0< \delta \leq \min \{1,\,\alpha/2\}$, then 
	\eqref{eq:RSFR_estimate_poly_decay}
	holds for a fixed $\tau >0$.
\end{lmm}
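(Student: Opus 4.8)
The plan is to follow the proof of Lemma~\ref{lem:bg_posi1} line by line, replacing the $(r-1)$-weighted resolvent estimates of Lemma~\ref{lem:Resol_T_int}.a) by the $\Lambda_{\delta/\alpha}(r)$-weighted ones of Lemma~\ref{lem:Resol_T_int}.b). As there, since $\beta,\gamma>0$ and $\beta+\gamma\geq\alpha$, I would fix $0<\beta_1\leq\beta$ and $0<\gamma_1\leq\gamma$ with $\beta_1+\gamma_1=\alpha$, put $p:=(1-\beta_1/\alpha)^{-1}$ and $q:=(1-\gamma_1/\alpha)^{-1}$ so that $1/p+1/q=1$, and use $b^-\in D((-A^-)^{\beta_1})$ and $f^-\in D((-A^-_*)^{\gamma_1})$.

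For the factor $\|R(z,T(\tau))S(\tau)\|$ I would reuse, essentially verbatim, the decomposition \eqref{eq:b1_def}--\eqref{eq:RS_bound_final}: with $b_1:=\int_0^\tau T^-(s)(-A^-)^{\beta_1}b^-\,ds\in X^-$ one obtains a constant $c>0$ such that
\[
\big\|R\big(z,T(\tau)\big)S(\tau)\big\|\leq\Big\|R\big(z,T(\tau)\big)\!\int_0^\tau\!T(s)b^+\,ds\Big\|+c\,\big\|R\big(z,T(\tau)\big)b_1\big\|^{1/p}
\]
for all $z\in\mathbb{D}_{r_1}\cap\mathbb{E}_1$; here $\int_0^\tau T(s)b^+\,ds\in X^+$, so the first term is bounded on $\mathbb{D}_{r_1}\cap\mathbb{E}_1$ by \eqref{eqResol_x_plus}, while $b_1\in D(A^-)\subset\mathcal{D}^\delta$ because $\delta\leq1$. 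For the finite-rank factor $\|F^+R(z,T^+(\tau))\|$ I would use Lemma~\ref{lem:F_adjoint} together with the $T(\tau)^*$-analogue of \eqref{eqResol_x_plus} (valid since $f^+\in X^+_*$ is finitely supported) to see that it stays bounded on $\mathbb{D}_{r_1}\cap\mathbb{E}_1$. For $\|F^-R(z,T^-(\tau))(-A^-)^{-\delta}\|$, Lemma~\ref{lem:F_adjoint} gives the bound $\|R(\overline{z},T^-_*(\tau))(-A^-_*)^{-\delta}f^-\|$; here I would write $(-A^-_*)^{-\delta}f^-=(-A^-_*)^{-\gamma_1}g_1$ with $g_1:=(-A^-_*)^{\gamma_1-\delta}f^-\in X^-_*$, commute $(-A^-_*)^{-\gamma_1}$ past the resolvent, apply the moment inequality, and bound $\|(-A^-_*)^{-\alpha}R(\overline{z},T^-_*(\tau))g_1\|$ via Lemma~\ref{lem:z_lambda_bound}, obtaining a constant $c'>0$ with $\|F^-R(z,T^-(\tau))(-A^-)^{-\delta}\|\leq c'\,\|R(\overline{z},T(\tau)^*)g_1\|^{1/q}$ on $\mathbb{D}_{r_1}\cap\mathbb{E}_1$. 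I would also check that $g_1\in\mathcal{D}_*^\delta$, which follows from $\sum_n|\lambda_n|^{2\delta}|\langle g_1,\phi_n\rangle|^2=\sum_n|\lambda_n|^{2\gamma_1}|\langle f^-,\phi_n\rangle|^2<\infty$ since $f^-\in\mathcal{D}_*^\gamma\subseteq\mathcal{D}_*^{\gamma_1}$.

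To conclude, I would combine these three bounds using $(d_1+d_2)^2\leq2(d_1^2+d_2^2)$, so that the integrands in \eqref{eq:RSFR_estimate_poly_decay_p} and \eqref{eq:RSFR_estimate_poly_decay_m} are dominated by finite sums of terms, each a product of a bounded quantity with one of $1$, $\|R(z,T(\tau))b_1\|^{2/p}$, $\|R(\overline{z},T(\tau)^*)g_1\|^{2/q}$, or $\|R(z,T(\tau))b_1\|^{2/p}\|R(\overline{z},T(\tau)^*)g_1\|^{2/q}$. On each term I would apply H\"older's inequality with the conjugate exponents $p$ and $q$, together with the factorization $\Lambda_{\delta/\alpha}(r)=\Lambda_{\delta/\alpha}(r)^{1/p}\Lambda_{\delta/\alpha}(r)^{1/q}$ and $\Lambda_{\delta/\alpha}(r)\to0$ as $r\downarrow1$; since $b_1\in\mathcal{D}^\delta$ and $g_1\in\mathcal{D}_*^\delta$, Lemma~\ref{lem:Resol_T_int}.b) forces
\[
\lim_{r\downarrow1}\Lambda_{\delta/\alpha}(r)\!\int_0^{2\pi}\!\big\|R\big(re^{i\theta},T(\tau)\big)b_1\big\|^2d\theta=\lim_{r\downarrow1}\Lambda_{\delta/\alpha}(r)\!\int_0^{2\pi}\!\big\|R\big(re^{-i\theta},T(\tau)^*\big)g_1\big\|^2d\theta=0,
\]
so every term vanishes as $r\downarrow1$, which gives \eqref{eq:RSFR_estimate_poly_decay}.

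The step I expect to be the main obstacle is the treatment of the $F^-$ factor: the interpolation exponent there must be chosen compatibly with $\beta_1$, namely $\gamma_1=\alpha-\beta_1$, and one must verify that the resulting remainder $g_1=(-A^-_*)^{\gamma_1-\delta}f^-$ is simultaneously well defined (needing $f^-\in D((-A^-_*)^{(\gamma_1-\delta)_+})$, which follows from $f^-\in\mathcal{D}_*^\gamma$ and $\gamma_1\leq\gamma$) and regular enough to lie in $\mathcal{D}_*^\delta$, so that Lemma~\ref{lem:Resol_T_int}.b) applies; the hypothesis $\delta\leq1$ enters here to place $b_1\in\mathcal{D}^\delta$. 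Once this fractional-power bookkeeping is done and the $1/p,1/q$ exponents coming from the moment inequality are matched against the H\"older combination, the rest is routine and follows the template already set up in Lemma~\ref{lem:bg_posi1}.
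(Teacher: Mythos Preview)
Your proposal is correct and follows essentially the same approach as the paper's proof. The paper likewise reuses the decomposition from Lemma~\ref{lem:bg_posi1}, defines the analogue of your $g_1$ as $f_2:=(-A^-_*)^{-\delta}(-A^-_*)^{\gamma_1}f^-\in D((-A^-_*)^{\delta})$ (which equals your $(-A^-_*)^{\gamma_1-\delta}f^-$), observes $b_1\in D(A^-)\subset D((-A^-)^{\delta})$ from $\delta\leq1$, and then concludes by the same combination of Lemma~\ref{lem:F_adjoint}, the moment inequality, H\"older's inequality, and Lemma~\ref{lem:Resol_T_int}.b).
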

\begin{proof}
	Let $\beta_1,\gamma_1 >0$ be as in the proof of Lemma~\ref{lem:bg_posi1}.
	If $0< \delta \leq \min \{1,\,\alpha/2\}$,
	then $b_1$ defined by \eqref{eq:b1_def} satisfies
	$b_1 \in D(A^-)
	\subset D((-A^-)^{\delta}).
	$
	By $f^- \in D((-A^-_*)^{\gamma_1})$, we obtain
	\[
	f_2 \coloneqq (-A^-_*)^{-\delta}(-A^-_*)^{\gamma_1}f^- \in D\big((-A^-_*)^{\delta }\big).
	\]
	Take $\tau >0$ arbitrarily.
	Since
	$ (-A^-_*)^{-\delta }f^- =  (-A^-_*)^{-\gamma_1}f_2$,
	it follows that for all $z \in \rho(T(\tau))$,
	\[
	\big\|R\big(\overline{z}, 
	T(\tau)^*\big) (-A^-_*)^{-\delta}f^-\big\|=
	\big\|R\big(\overline{z}, T(\tau)^*\big)  (-A^-_*)^{-\gamma_1}f_2 \big\|.
	\]
	Lemma~\ref{lem:F_adjoint} yields
	\begin{align*}
		\big\|F^+R\big(z, T^+(\tau)\big) \big\|
		&\leq  \|R\big(\overline{z},T(\tau)^*\big) f^+\| 
	\end{align*}
	and
	\begin{align*}
		\big\|F^-R\big(z, 
		T^-(\tau)\big) (-A^-)^{-\delta} \big\|
		&\leq \big\|R\big(\overline{z}, T(\tau)^*\big)  (-A^-_*)^{-\gamma_1}f_2 \big\|.
	\end{align*}
	for all $z \in \rho(T(\tau))$.
	Therefore, we obtain \eqref{eq:RSFR_estimate_poly_decay}
	by
	arguments similar to those proving \eqref{eq:RSFR_estimate}, i.e.,
	a combination of  the moment inequality, 
	the H\"older's inequality, and
	Lemma~\ref{lem:Resol_T_int}.b).
\end{proof}

\begin{proof}[Proof of Proposition~\Rref{prop:RSFR_bound}]
	The assertion follows immediately from
	Lemmas~\ref{lem:g_zero}, \ref{lem:b_zero}, \ref{lem:bg_posi1}, and
	\ref{lem:bg_posi2}.
\end{proof}

\subsection{Proof of Theorem~\ref{thm:SD_SS}}
Now we are able to prove the main result.
\begin{proof}[Proof of Theorem~\Rref{thm:SD_SS}]
	By Theorem~\ref{lem:resol_T_SF} and the combination of
	Lemmas~\ref{lem:cont_time_trans_func_bound} and 
	\ref{lem:discrete_time_trans_func_bound},
	there exist $M_0>0$ and $\tau^*  >0 $  such that 
	for all 
	$\tau \in (0,\tau^*)$, we obtain $\mathbb{E}_1 \subset \rho (\Delta(\tau))$ and 
	\begin{align*}
		&\left|\frac{1}{1 - FR\big(z, T(\tau)\big) S(\tau) } \right| \leq M_0
	\end{align*}
	for all $z \in \rho (T(\tau)) \cap \overline{\mathbb{E}_1}$.
	Take $\tau \in (0,\tau^*)$.
	By
	(A\ref{assump:finite_unstable}),
	there exists $r_0 >1$ such that
	$re^{i\theta} \in \rho(T(\tau))$ 
	for all $r \in (1,r_0)$ and $\theta \in [0,2\pi)$.
	By the Sherman-Morrison-Woodbury formula given in Lemma~\ref{lem:SMW}, we obtain
	\begin{equation}
		\label{eq:SMW}
		R(re^{i\theta}, T(\tau)+S(\tau)F) x =
		R\big(re^{i\theta}, T(\tau)\big) x +
		\frac{R\big(re^{i\theta}, T(\tau)\big) S(\tau)FR\big(re^{i\theta}, T(\tau)\big)  x }
		{1 - FR\big(re^{i\theta}, T(\tau)\big) S(\tau) }
	\end{equation}
	for all $x \in X$, $r \in (1,r_0)$, and  $\theta \in [0,2\pi)$.
	
	a)
	If we show that
	\begin{subequations}
		\label{eq:resol_estimate_for_PB}
		\begin{align}
			\lim_{r \downarrow 1} ~(r-1)
			\int^{2\pi}_0 
			\big\|R\big(re^{i\theta}, \Delta(\tau)\big)x\big\|^2 d\theta = 0
			\quad &\text{for all $x\in X$ and}
			\label{eq:resol_estimate_for_PBa}\\
			\limsup_{r \downarrow 1} \, (r-1)
			\int^{2\pi}_0 
			\big\|R\big(re^{i\theta}, \Delta(\tau) ^*\big)y\big\|^2 d\theta < \infty
			\quad &\text{for all $y\in X$},
			\label{eq:resol_estimate_for_PBb}
		\end{align} 
	\end{subequations}
	then the discrete semigroup $(\Delta(\tau)^k)_{k \in \mathbb{N}}$ 
	is strongly stable by Theorem~\ref{thm:strong_stability_resol},
	and therefore Proposition \ref{prop:SD_DT}.a) implies that 
	the sampled-data system \eqref{eq:sampled_data_sys} is strongly stable.
	
	Since $(d_1+d_2)^2 \leq 2(d_1^2+d_2^2)$ for all $d_1,d_2 \geq 0$, 
	the Sherman-Morrison-Woodbury formula \eqref{eq:SMW} yields
	\begin{align*}
		\int^{2\pi}_0 
		\|R(re^{i\theta}, T(\tau)+S(\tau)F) x\|^2 d\theta  &\leq
		2 \int^{2\pi}_0 
		\big\|R\big(re^{i\theta}, T(\tau)\big) x\big\|^2 d\theta \notag \\
		&\qquad + 2M_0^2 \|x\|^2 \int^{2\pi}_0 \big\|R\big(re^{i\theta}, T(\tau)\big) S(\tau)\big\|^2 
		\,
		\big\|FR\big(re^{i\theta}, T(\tau)\big)\big\|^2 d\theta 
	\end{align*}
	for all $x \in X$ and  $r \in (1,r_0)$.
	By applying Lemma~\ref{lem:Resol_T_int}.a) and Proposition~\ref{prop:RSFR_bound}.a)
	to
	the first and second terms on the right-hand side of this inequality, respectively,
	we obtain \eqref{eq:resol_estimate_for_PBa} for all $x \in X$.
	A similar calculation shows that \eqref{eq:resol_estimate_for_PBb} holds
	for all $y \in Y$.
	In fact, a stronger result than \eqref{eq:resol_estimate_for_PBb},
	\[
	\lim_{r \downarrow 1} ~(r-1)
	\int^{2\pi}_0 
	\big\|R\big(re^{i\theta}, \Delta(\tau) ^*\big)y\big\|^2 d\theta =0,\qquad 
	y \in X,
	\]
	is obtained from the following estimate:
	\begin{align*}
		\int^{2\pi}_0 
		\|R(re^{i\theta}, T(\tau)+S(\tau)F)^* y\|^2 d\theta  
		&=
		\int^{2\pi}_0 
		\left\|R\big(re^{i\theta}, T(\tau)\big)^* y +
		\left[ \frac{R\big(re^{i\theta}, T(\tau)\big) 
			S(\tau)FR\big(re^{i\theta}, T(\tau)\big) }
		{1 - FR\big(re^{i\theta}, T(\tau)\big) S(\tau) } \right]^* y \right\|^2 d\theta 
		\\
		&\quad \leq
		2 \int^{2\pi}_0 
		\big\|R\big(re^{i\theta}, T(\tau)^*\big) y\big\|^2 d\theta \\
		&\qquad + 2M_0^2 \|y\|^2 \int^{2\pi}_0 
		\big\|R\big(re^{i\theta}, T(\tau)\big) S(\tau)\big\|^2 \,
		\big\|FR\big(re^{i\theta}, T(\tau)\big)\big\|^2 d\theta
	\end{align*}
	for all $y \in X$ and  $r \in (1,r_0)$.
	
	b)
	Let $0 < \delta \leq \alpha /2$, and
	assume that $\delta \leq 1$ or $\beta \geq \alpha$.
	Let
	$x \in \mathcal{D}^{ \delta}$ and define
	$x^+\coloneqq \Pi x$ and $x^- \coloneqq (I-\Pi)x$, where 
	$\Pi$ is the projection operator given in \eqref{eq:projection}. Then
	$x^- \in D((-A^-)^{\delta})$, and hence
	$x^- = (-A^-)^{-\delta} \xi^-$ for some $\xi^- \in X^-$.
	For all $z \in \rho(T(\tau))$, 
	\begin{align*}
		FR\big(z, T(\tau)\big)x&=
		F^+R\big(z, T^+(\tau)\big)x^+ + 
		F^-R\big(z, T^-(\tau)\big)(-A^-)^{- \delta} \xi^-.
	\end{align*}
	Since $(d_1+d_2+d_3)^2 \leq 3(d_1^2+d_2^2+d_3^2)$ for all 
	$d_1,d_2,d_3 \geq 0$,
	the Sherman-Morrison-Woodbury formula \eqref{eq:SMW} yields
	\begin{align*}
		&\int^{2\pi}_0 
		\|R(re^{i\theta}, T(\tau)+S(\tau)F) x\|^2 d\theta  \\
		&\qquad \leq
		3 \int^{2\pi}_0 
		\big\|R\big(re^{i\theta}, T(\tau)\big) x\big\|^2 d\theta 
		+ 3M_0^2 \|x^+\|^2 \int^{2\pi}_0 \big\|R\big(re^{i\theta}, T(\tau)\big) S(\tau)\big\|^2 
		\,
		\big\|F^+R\big(re^{i\theta}, T^+(\tau)\big) \big\|^2 d\theta \\
		&\qquad\qquad  + 3M_0^2 \|\xi^-\|^2 \int^{2\pi}_0 \big\|R\big(re^{i\theta}, T(\tau)\big) S(\tau)\big\|^2 
		\,
		\big\|F^-R\big(re^{i\theta}, T^-(\tau)\big)(-A^-)^{- \delta}\big\|^2 d\theta .
	\end{align*}
	We apply Lemma~\ref{lem:Resol_T_int}.b) to the first term
	on the right-hand side and
	Proposition~\ref{prop:RSFR_bound}.b) to the second and third
	terms. Then we obtain
	\[
	\lim_{r\downarrow 1} ~\Lambda_{\delta/\alpha}(r)
	\int^{2\pi}_0 
	\big\|R\big(re^{i\theta}, \Delta (\tau)\big) x \big\|^2 d\theta  =0.
	\]
	
	We have shown in the proof of a) 
	that the discrete semigroup $(\Delta(\tau)^k)_{k \in \mathbb{N}}$ 
	is strongly stable. 
	Therefore, it is power bounded.
	Proposition~\ref{prop:discrete_poly_decay}.b) implies that 
	for all $x \in \mathcal{D}^{\delta}$,
	\begin{align*}
		\|\Delta(\tau)^k x\| = 
		\begin{cases}
			o(k^{-\delta/\alpha}) & \text{if $0 < \delta < \alpha/2$} \vspace{5pt}\\
			o\left(
			\sqrt{\dfrac{\log k}{k} }
			\right) & \text{if $\delta = \alpha/2$}
		\end{cases}
	\end{align*}
	as $k \to \infty$.
	By Proposition \ref{prop:SD_DT}.b) and the subsequent discussion,
	we conclude that 
	for every initial state $x^0 \in \mathcal{D}^{\delta}$,
	the state $x$ of the sampled-data system \eqref{eq:sampled_data_sys} 
	satisfies
	\begin{equation*}
		\|x(t)\| = 
		\begin{cases}
			o(t^{-\delta/\alpha}) & \text{if $0 < \delta < \alpha/2$} \vspace{5pt}\\
			o \left(
			\sqrt{\dfrac{\log t}{t}}
			\right)& \text{if $\delta = \alpha/2$}
		\end{cases}
	\end{equation*}
	as $t\to \infty$.
\end{proof}
\section{Example}
\label{sec:wave_eq}
Consider the controlled
wave equation with Dirichlet boundary conditions
\begin{equation}
	\label{eq:wave_eq}
	\left\{
	\begin{alignedat}{4}
		\frac{\partial^2 w}{\partial t^2} (\xi,t) &=
		\frac{\partial^2 w}{\partial \xi^2} (\xi,t)
		+b_0(\xi) u(t)
		,\quad 0\leq \xi \leq 1,~t \geq 0 \\
		w(0,t) &= w(1,t) = 0,\quad t \geq 0\\
		w(\xi,0) &= w_0(\xi),~ \frac{\partial w}{\partial t}(\xi,0) = w_1(\xi),\quad 0\leq\xi \leq 1,
	\end{alignedat}
	\right.
\end{equation}
where 
$b_0 \colon [0,1] \to \mathbb{R}$ 
is a shaping function 
around the control point and
$u(t) \in \mathbb{R}$ is the control input at time $t \geq 0$.
First, we write the equation \eqref{eq:wave_eq}  as an abstract evolution equation; see  Example~3.2.16 in \cite{Curtain2020} and Example~VI.8.3 in \cite{Engel2000} for details.
Define the operator $A_0 \colon D(A_0) \subset L^2(0,1) \to L^2(0,1)$
by 
\[
A_0w \coloneqq \frac{d^2w}{d\xi^2}
\] 
with domain
$
D(A_0) \coloneqq H^2(0,1) \cap H^1_0(0,1)= \{ 
w \in H^{2}(0,1) :  w(0)= w(1) = 0
\}
$.
The operator $-A_0$ is self-adjoint and positive definite on $L^2(0,1)$.
Hence there exists  a unique positive definite square root $(-A_0)^{1/2}$
with domain $D((-A_0)^{1/2}) = H^{1}_0(0,1) =
\{ 
w \in H^{1}(0,1) :  w(0)= w(1) = 0
\}.
$
Define the Hilbert space
$X \coloneqq D((-A_0)^{1/2}) \times L^2(0,1)$ endowed with
the inner product 
\[
\langle x, y \rangle \coloneqq
\Big \langle (-A_0)^{1/2}x_1, (-A_0)^{1/2}y_1 \Big\rangle_{L^2} + 
\langle x_2, y_2 \rangle_{L^2},\quad x = 
\begin{bmatrix}
	x_1 \\ x_2
\end{bmatrix} \in X,~y = 
\begin{bmatrix}
	y_1 \\ y_2
\end{bmatrix} \in X.
\]
Let $b_0 \in L^2(0,1)$, and put
\[
b \coloneqq 
\begin{bmatrix}
	0 \\ b_0
\end{bmatrix} \in X.
\]
Define
\[
A_1 \coloneqq 
\begin{bmatrix}
	0 & I \\
	A_0 & 0
\end{bmatrix}
\]
with domain $
D(A_1) \coloneqq D(A_0) \times D((-A_0)^{1/2})
$
and 
\[
Bu \coloneqq 
bu,\quad u \in \mathbb{C}.
\]
Then
the controlled wave equation \eqref{eq:wave_eq} can be written
in the form $\dot x =A_1 x + Bu$, where
\[
x(t) \coloneqq
\begin{bmatrix}
	w(\cdot,t) \vspace{6pt}\\ \dfrac{\partial w}{\partial t} (\cdot, t)\vspace{4pt}
\end{bmatrix},\quad t \geq 0.
\]

We denote by $H^{-1}(0,1)$ the dual of $H^1_0(0,1)$
with respect to the pivot space $L^2(0,1)$.
The duality pairing between $H^1_0(0,1)$ and $H^{-1}(0,1)$ 
is denoted by $\langle g, \nu \rangle_{H^1_0,H^{-1}}$ for
$g \in H^1_0(0,1)$ and $\nu  \in H^{-1}(0,1)$.
Then $A_0$ has a unique extension such that $A_0 \in \mathcal{L}(
H^1_0(0,1), H^{-1}(0,1))$, and this extension is unitary; see Corollary~3.4.6 and
Proposition 3.5.1 of
\cite{Tucsnak2009}.
Let
$\zeta_1 \in H^{-1}(0,1)$ and 
$\zeta_2,\eta_0 \in L^2(0,1)$.
We now consider the perturbed wave equation
\begin{equation}
	\label{eq:wave_eq_perturbed}
	\left\{
	\begin{alignedat}{4}
		\frac{\partial^2 w}{\partial t^2} (\xi,t) &=
		\frac{\partial^2 w}{\partial \xi^2} (\xi,t)
		+b_0(\xi) u(t) 
		+ 
		\left(
		\langle
		w, \zeta_1
		\rangle_{H^1_0,H^{-1}} + 
		\left\langle
		\frac{\partial w}{\partial t}, \zeta_2
		\right\rangle_{L^2} 
		\right)\eta_0(\xi),\quad 0\leq \xi \leq 1,~t \geq 0 \\
		w(0,t) &= w(1,t) = 0,\quad t \geq 0\\
		w(\xi,0) &= w_0(\xi),~ \frac{\partial w}{\partial t}(\xi,0) = w_1(\xi),\quad 0\leq\xi \leq 1.
	\end{alignedat}
	\right.
\end{equation}
Put
\[
\zeta \coloneqq 
\begin{bmatrix}
	-A_0^{-1}\zeta_1 \\ \zeta_2 
\end{bmatrix} \in X,\quad 
\eta \coloneqq
\begin{bmatrix}
	0 \\ \eta_0
\end{bmatrix} \in X,
\]
and define $V \in \mathcal{L}(X)$ by
\[
Vx \coloneqq 
\langle x, \zeta \rangle \eta = 
\begin{bmatrix}
	0 \\
	\big(
	\langle
	x_1, \zeta_1
	\rangle_{H^1_0,H^{-1}} + 
	\left\langle
	x_2, \zeta_2
	\right\rangle_{L^2} 
	\big)\eta_0
\end{bmatrix},
\quad x =
\begin{bmatrix}
	x_1 \\ x_2
\end{bmatrix}\in X,
\]
which  is in the form of
one-rank perturbations. The perturbed 
wave equation \eqref{eq:wave_eq_perturbed} is transformed into
the abstract evolution equation  $\dot x = A x + Bu$, where $A \coloneqq A_1 +V$ with domain $D(A) = D(A_1)$.
Assume that the perturbations $\zeta_1$, $\zeta_2$, and $\eta_0$ are chosen
so that
$A$
is a Riesz-spectral operator of the form \eqref{eq:RS_operator} and 
has
the spectral properties
(A\ref{assump:finite_unstable}) and (A\ref{assump:imaginary}).
Such perturbations $\zeta_1$, $\zeta_2$, and $\eta_0$ can be constructed
with minor modifications of the proof of 
Theorem~13 in \cite{Paunonen2011}; see also
Theorem~1 in \cite{Xu1996}.

We apply the spectral decomposition by the projection $\Pi$ given in \eqref{eq:projection}.
Let $N_{\rm a} \in \mathbb{N}$ 
satisfy \eqref{eq:Ns_def}, and assume that 
$\langle b, \psi_n \rangle \not=0$
for all $n=1,\dots, N_{\rm a} - 1$.
This condition on $b$ is satisfied if and only if
there exists $f_1 \in X^+_* = \Pi^* X$ such that
the matrix
\begin{align} 
	\label{eq:wave_unstable_part}
	\begin{bmatrix}
		\lambda_1 & & 0 \\
		& \ddots &  \\
		0 & & \lambda_{N_{\rm a}-1}
	\end{bmatrix} + 
	\begin{bmatrix}
		\langle b, \psi_1 \rangle \\
		\vdots \\
		\langle b, \psi_{N_{\rm a}-1} \rangle
	\end{bmatrix}
	\begin{bmatrix}
		\langle \phi_1, f_1 \rangle & \cdots & 
		\langle \phi_{N_{\rm a} - 1}, f_1 \rangle
	\end{bmatrix}
\end{align}		 
is Hurwitz; see, e.g.,
Theorem~8.2.3 of \cite{Curtain2020}. 
Choose $f_1 \in X^+_*$ such that the matrix given in \eqref{eq:wave_unstable_part}
is Hurwitz, and
define 
$F_1 \in  \mathcal{L}(X,\mathbb{C})$
by
\[
F_1 x \coloneqq \langle x, f_1\rangle,\quad x \in X.
\] 
Since $(T^-(t))_{t \geq 0}$ is polynomially stable with parameter $\alpha$,
the $C_0$-semigroup $(T_{BF_1}(t))_{t\geq 0}$ 
generated by
$A+BF_1$ has the same stability property
by Theorem~9 of \cite{Paunonen2014OM}.

Let $\beta, \gamma \geq 0$ satisfy $\beta + \gamma >\alpha$.
Assume that $b \in \mathcal{D}^{\beta}$, and
take $f_2 \in \mathcal{D}_*^\gamma$.
We choose
$\widetilde \beta \in [0,\beta)$ and
$\widetilde \gamma \in [0,\gamma)$ such that
$\widetilde \beta + \widetilde \gamma \geq \alpha$.
Since $b \in \mathcal{D}^\beta$ and 
$f_1 \in \mathcal{D}_*^\gamma$,
Lemma~\ref{lem:ABF_domain} implies that
\[
b \in D\Big((-A-BF_1)^{\widetilde \beta}\Big),\quad
f_2 \in D\Big((-A^*-F_1^*B^*)^{\widetilde \gamma}\Big).
\]
Define the feedback operator $F \in \mathcal{L}(X,\mathbb{C})$ by
$Fx \coloneqq \langle x, f_1 + f_2\rangle $ for $x \in X$.
By Theorem~6 of \cite{Paunonen2014JDE}, there exists $c >0$ such that 
$A+BF$ also generates a polynomially stable $C_0$-semigroup 
with parameter $\alpha$
whenever
\begin{equation}
	\label{eq:wave_norm_cond}
	\big\|(-A^*-F_1^*B^*)^{\widetilde \gamma} f_2 \big\| < c.
\end{equation}
Hence, if
$f_1 \in X^+_*$ and $f_2 \in \mathcal{D}^{\gamma}_*$ are chosen so that
the matrix given in \eqref{eq:wave_unstable_part} is Hurwitz and 
the norm condition \eqref{eq:wave_norm_cond} holds, then
Assumption~\ref{assum:for_MR} is satisfied, and 
by Theorem~\ref{thm:SD_SS},
the sampled-data system 
\eqref{eq:sampled_data_sys} 
is strongly stable for all sufficiently small sampling periods.
Moreover, let $\alpha = 2$. Then
$\mathcal{D}^{\alpha/2} = D(A) = D(A_1)$, and
therefore the state $x$ of the sampled-data system \eqref{eq:sampled_data_sys} 
satisfies
\[
\|x(t)\| = o \left(
\sqrt{\frac{\log t}{t}}
\right)\qquad (t \to \infty)
\]
for every 
initial state $x^0 \in D(A_1) = (H^2(0,1) \cap H^1_0(0,1)) \times H^1_0(0,1)$.

\section{Conclusion}
\label{sec:conclusion}
We have studied the robustness of polynomial stability
with respect to sampling.
The generator we consider  is a Riesz-spectral operator
whose eigenvalues may approach the imaginary axis asymptotically.
We have presented conditions for the preservation of strong stability
under fast sampling.
Moreover, an estimate for the rate of decay of the state has been provided for
the sampled-data system with a smooth initial state and
a sufficiently small sampling period.
Future work will focus on 
relaxing the assumption on the generator and addressing
systems with multi- and infinite-dimensional input spaces.

\end{document}